\numberwithin{equation}{section}
\newtheorem{tw}{Theorem}
\newtheorem{lem}{Lemma}
\begin{document}

\title{Strong solutions to the Navier--Stokes--Fourier system with slip--inflow boundary conditions
\footnotetext{\textbf{Mathematics Subject Classification (2000). 76N10, 35Q30}
\hfill\break}
\footnotetext{\textbf{Keywords.}  Steady Navier--Stokes--Fourier system; inflow boundary conditions; strong solution; small data; \hfill\break}}
\author{{\textsc{Tomasz Piasecki$^1$}} and \textsc{Milan Pokorn\'y$^2$}
}
\maketitle

\begin{center}
{\small {
{1.  Institute of Applied Mathematics and Mechanics}

{University of Warsaw,
ul. Banacha 2, 02-097 Warszawa, Poland}

{E-mail: {\tt tpiasecki@mimuw.edu.pl}}

{2. Mathematical Institute of Charles University}

{Faculty of Mathematics and Physics, Charles University in Prague}

{Sokolovsk\'a 83, 186 75 Praha 8, Czech Republic }

{E-mail: {\tt pokorny@karlin.mff.cuni.cz}}
}

}

\end{center}

\begin{abstract}
We consider a system of partial differential equations describing the steady flow of a compressible heat conducting Newtonian fluid in a three-dimensional channel with inflow and outflow part. We show the existence of a strong solution provided the data are close to a constant, but nontrivial flow with sufficiently large dissipation in the energy equation. 

\end{abstract}

\section{Introduction}
We investigate the stationary flow of a heat conducting compressible fluid in a cylindrical domain. The fluid is assumed to be Newtonian.
Then the flow is described by the stationary Navier-Stokes-Fourier (NSF) system:    
\begin{equation}  \label{main_system}
\begin{array}{lcr}
\rho v \cdot \nabla v - {\rm div}\, {\bf S}(\nabla v) + \nabla \pi(\rho,\theta) = \rho  f & \mbox{in} & \Omega,\\
{\rm div}\;(\rho v)=0 & \mbox{in} & \Omega,\\
{\rm div}\; (\rho E v) = \rho f \cdot v - {\rm div}\,(\pi v) + {\rm div}\, ({\bf S}(v)v) - {\rm div}\,q & \mbox{in} &\Omega,\\
({\bf S}(\nabla v) n)\cdot \tau_k + \alpha v\cdot \tau_k=b_k, \quad k=1,2 & \mbox{on} &\Gamma,\\
n \cdot v=d & \mbox{on} & \Gamma,\\
\rho=\rho_{in} & \mbox{on} & \Gamma_{in},\\
-q \cdot n + L(\theta-T) = g & \mbox{on} & \Gamma,
\end{array}
\end{equation}
where $\Omega = \Omega_0 \times [0,l]$ with $\Omega_0 \subset \mathbb{R}^2$ smooth,
$v$ is the velocity field of the fluid, $\rho$ is the density, $\theta$ is the absolute temperature. We assume that the  
pressure $\pi(\rho,\theta)$ is a twice continuously differentiable function on $\mathbb{R}^+ \times \mathbb{R}^+$ such that
\begin{equation} \label{1.2a}
\begin{array}{rcl}
\pi(1,T_0) &=& p_0 >0, \\
\partial_\rho \pi (1,T_0) &=&  p_1 >0, \\
\partial_\theta \pi (1,T_0) &=&  p_2 >0.  
\end{array}
\end{equation}
We consider the Newtonian compressible fluid, i.e. the stress tensor has the form
\begin{equation} \label{1.2b}
{\bf S}(\nabla v) = \mu \Big(\nabla v + \nabla^T v - \frac 23 {\rm div}\, v {\bf I}\Big) + \lambda {\rm div}\, v {\bf I},
\end{equation}
where the constant viscosities  $\mu$ and $\lambda$ fulfill $\mu >0$ and $\lambda \geq 0$.

Further, the friction coefficient $\alpha >0$.  Finally, $q=-\kappa \nabla \theta$ is the heat flux. Note that we could treat the situation $\kappa = \kappa (\rho, \theta)$, $\mu = \mu(\rho, \theta)$ and $\lambda = \lambda(\rho,\theta)$ with suitable assumptions on these functions. It would only lead to further complications, therefore we take them rather constant not to hide the main ideas by too many technicalities.

%
%

The boundary of the domain is divided in a natural way into the inflow part $\Gamma_{in}$,
the outflow part $\Gamma_{out}$ and the impermeable wall $\Gamma_0$. More precisely, we define
\begin{displaymath}
\begin{array}{c}
\Gamma_{in} = \{ x \in \Gamma: d < 0 \},\\
\Gamma_{out} = \{ x \in \Gamma: d > 0 \},\\ 
\Gamma_0 = \{ x \in \Gamma: d = 0 \}.
\end{array}
\end{displaymath}
We consider a channel-like flow, i.e. we further assume that $\Gamma_{in} = \Omega_0\times \{0\}$, $\Gamma_{out} = \Omega_0 \times \{l\}$ and $\Gamma_0 = \partial \Omega_0 \times [0,l]$. 
We set $L=0$ on $\Gamma_{in} \cup \Gamma_{out}$ and $L$ to be a given positive constant on the wall $\Gamma_0$.
We set $g=0$ on $\Gamma_0$ but we admit $g \neq 0$ on $\Gamma_{in} \cup \Gamma_{out}$ (see fig.\ref{rys1}).
\begin{figure}[htb]
\begin{center}
\includegraphics[width = 0.5\textwidth]{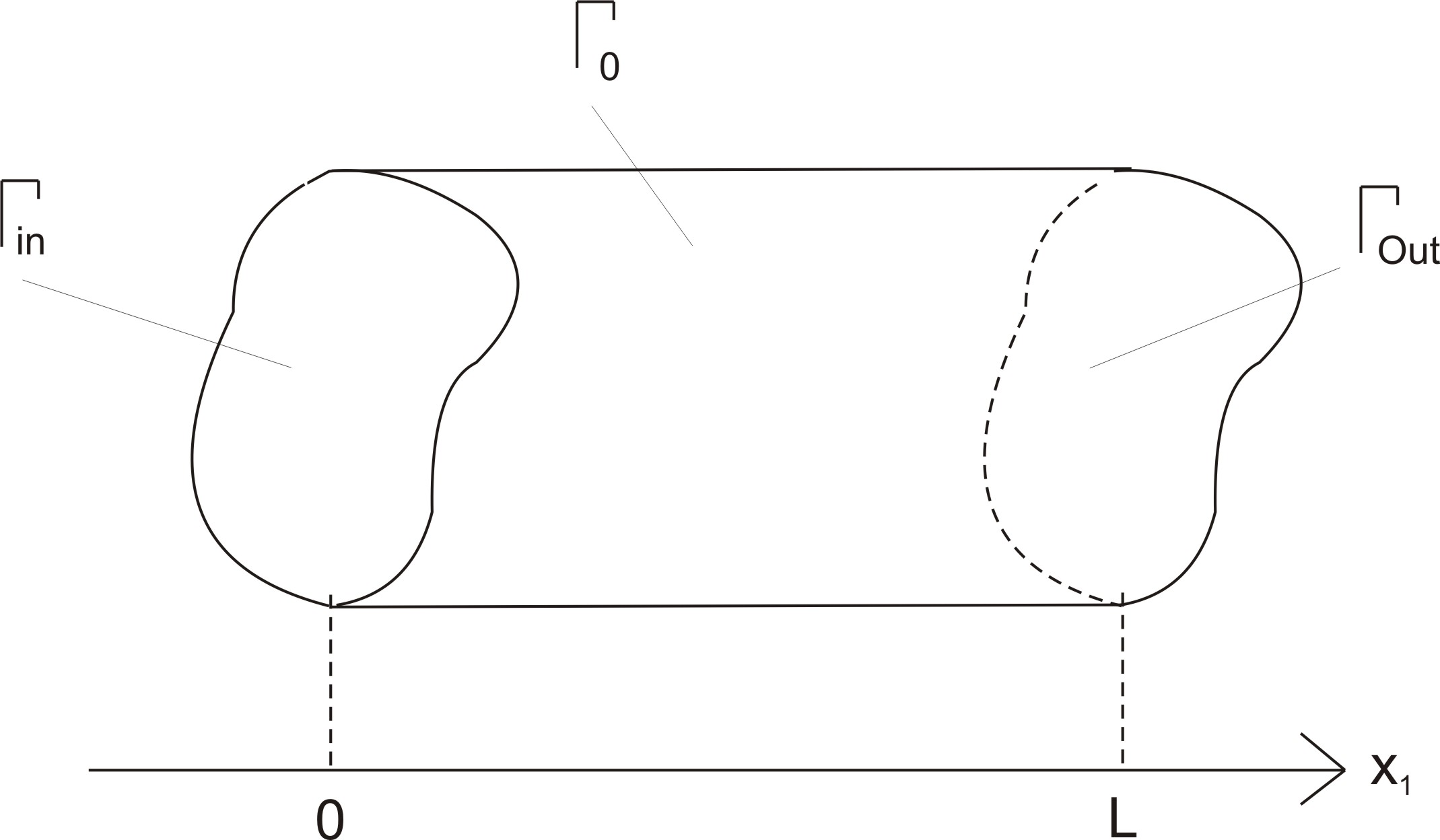}
\caption{The domain}      \label{rys1}
\end{center}
\end{figure}
Under such choice, on the inflow and outflow part the boundary condition for the temperature
reduces to the inhomogeneous Neumann condition. 
From the point of view of modeling these parts of the boundary are artificial and we can measure the
parameters of the flow, what can be reflected in different values of $g$.
The condition on $\Gamma_0$ means that 
the flux of the temperature through the wall is proportional to the difference 
between the outside temperature $T$ and the temperature of the fluid.  
The level of thermal insulation of the wall is given by $L$.
Existence and uniqueness of the special solution $\bar \theta$
under our choice of boundary conditions is discussed in the next chapter.

%
%
The total energy is given by 
$E = \frac{1}{2}|u|^2+e(\rho,\theta)$, where the internal energy $e$ is given by
\begin{equation} \label{def_e}
e(\rho,\theta) = e_\pi(\rho,\theta) + c_v \theta 
\end{equation}
with
\begin{equation} \label{1.3a}
\partial_\rho e_\pi (\rho,\theta) = \frac{1}{\rho^2}\big(\pi(\rho,\theta) - \theta \partial_\theta\pi(\rho,\theta)\big),
\end{equation}
i.e. the internal energy fulfills the Maxwell relation. Without loss of generality we set in \eqref{def_e} $c_v = 1$. Equivalently we can consider system (\ref{main_system}) with total energy balance 
(\ref{main_system})$_3$ replaced with the internal energy balance
\begin{equation} \label{int_en_bal}
{\rm div} (\rho e v) - \kappa \Delta \theta = {\bf S}(\nabla v) : \nabla v - \pi {\rm div} v. 
\end{equation} 
We also set
\begin{equation} \label{1.6a}
\partial_\rho e_\pi (1,T_0) = e_1, \qquad \partial_\theta e_\pi(1,T_0) = e_2.
\end{equation}
%
%

{\bf Known results}

The existence theory for the steady compressible Navier--Stokes equations in the framework of strong solutions was intensively studied in eighties, see e.g. \cite{NoPa}, \cite{NoPi} and many other papers. In the beginning of this century, inspired by the results of P.L. Lions (see \cite{Li}),  the attention turned rather towards the weak solutions, i.e. towards the existence theory without any smallness assumption on the data. The best results in this direction so far can be found in \cite{JeNo} (slip boundary conditions) and \cite{JiZh} (space periodic situation). Note, however, that the theory is not able to treat the situation with nonzero inflow/outflow conditions.   

Strong solutions with inhomogeneous boundary data, which are in our scope of interest in this paper,
have been considered for the first time in \cite{VZ} in Hilbert spaces and later in $L_p$ framework
(\cite{Kw1}, \cite{Kw2}, \cite{PRS1}).

In \cite{TP1} a constant flow in the direction of the axis of the cylinder is investigated and existence
of a strong solutions in its vicinity is shown in the barotropic case.

Concerning the NSF system, the existence of weak solutions was studied much later, see \cite{MuPo1}, \cite{MuPo2}. The best result in this direction can be found in \cite{JeNoPo}. Note, however, that similarly to the barotropic case, all these results do not allow to treat the inflow/outflow problems.
 
The theory of strong solutions for the NSF system describing the thermal effects is much less developed,
to our knowledge the only result in $L_p$ framework is the paper by Beirao da Veiga \cite{BdV} 
where existence of strong solutions in a vicinity of a constant flow with zero velocity is shown.

{\bf Special solution.} 
Here we are interested in showing existence of strong solutions, 
i.e. we are looking for $(v,\rho,\theta) \in W^2_p(\Omega) \times W^1_p(\Omega) \times W^2_p(\Omega)$,
solving system (\ref{main_system}) so that the solution is close to a given special solution. Hence we arrive at a problem
with some smallness of the data. Even though smallness obviously facilitates the analysis, 
the results of this kind are
still missing in the theory of compressible flow, especially for heat-conducting fluids (see the overview above).
This is mostly due to high complexity of the system (\ref{main_system})$_{1-3}$, 
in particular due to its mixed, elliptic (or parabolic
in time dependent case) and hyperbolic character. 
Investigation of stationary solutions close to given laminar flows is of particular importance
if we want to investigate stability of special solutions, which is quite well developed for weak solutions
but so far almost not investigated in strong solutions framework (see \cite{NoS} for an overview of results). 

The simplest example of a special solution to perturb is a solution with zero velocity and constant density
and temperature. Such flow has been investigated in \cite{BdV} with homogeneous Dirichlet boundary conditions.
From the point of view of applications it is important to investigate flows with large velocity,
which leads in a natural way to inhomogeneous boundary data. 
In a cylindrical domain a natural example of such solution is a constant flow in the direction
of the axis of the cylinder:
\begin{equation} 
\bar v = [1,0,0], \quad \bar \rho = 1.
\end{equation}
In other words, we can state that such a flow 
is natural to be investigated as a special solution to the system (\ref{main_system})
in our domain $\Omega$
 
%
 
As for the temperature of the basic solution the simplest choice would be to
assume it constant.  
It seems however quite artificial with the flow $(\bar v, \bar \rho)$ defined above. 
It is more interesting from physical point of view to consider
a flow with nontrivial temperature function $\bar \theta$ which corresponds 
to the solution $(\bar v, \bar \rho)$  as a solution to corresponding energy balance equation.    
On the boundary we assume the temperature of the form $T = T_0+T_1$
with constant, possibly large $T_0$ and small variation $T_1$.
Substituting $(\bar v, \bar \rho)$ to the internal energy balance equation we see that
$\bar \theta$ should satisfy the system
\begin{equation} \label{eqn_theta}
\begin{array}{lr}
(1+ e_2) \partial_{x_1}\overline {\theta} - \kappa \Delta \overline{ \theta} = 0 & \mbox{ in } \Omega,\\
\kappa \frac{\partial \overline{ \theta}}{\partial n} + L(\overline{ \theta} - T_0 - T_1) = g & \mbox{ on } \Gamma. 
\end{array}
\end{equation} 
Now we write $\overline{ \theta} = \overline{\theta}_1 + \overline{ \theta}_2 = T_0 + (\overline{ \theta}_0 - T_0) + \overline{ \theta}_1$, 
where $\overline{ \theta}_0$ solves 
\begin{equation} \label{eqn_tilde_theta}
\begin{array}{lr}
- \kappa \Delta \overline{ \theta}_0 = c_g & \mbox{ in } \Omega, \\[2pt]
\kappa \frac{\partial \overline{ \theta}_0}{\partial n}|_{\Gamma} = g & \mbox{ on } \Gamma,\\[2pt]
\frac{1}{|\Omega|}\int_{\Omega} \overline{ \theta}_0 \,dx = T_0,
\end{array}
\end{equation}
and
\begin{equation} \label{def_cg}
c_g = \frac{1}{|\Omega|} \int_{\Gamma} g \,dS.
\end{equation}

\noindent
The existence of a solution $\overline{ \theta}_0 \in W^2_p$ to (\ref{eqn_tilde_theta}) is a classical elliptic
result, we only need to apply symmetry to deal with corner singularities. The details
are given in Lemma \ref{lem_Neumann}. In particular $\overline{ \theta}_0$ satisfies 
\begin{equation} \label{est_tilde_theta}
\|\nabla^2 \overline{\theta}_0\|_{L_p} + \|\nabla \overline{ \theta}_0\|_{L_p} + \|\overline {\theta}_0-T_0\|_{L_p} 
\leq C [ \|g\|_{W^{1-1/p}_p(\Gamma)} + \|c_g\|_{L_p(\Gamma)} ] \leq C \|g\|_{W^{1-1/p}_p(\Gamma)}.
\end{equation}
 
Now we see that $\overline{ \theta}_1$ satisfies
\begin{equation} \label{eqn_theta1}
\begin{array}{lr}
 (1+e_2)\partial_{x_1}\overline{ \theta}_1 - \kappa \Delta \overline{ \theta}_1 = - (1+ e_2)\partial_{x_1}\overline{ \theta}_0 - c_g & \mbox{ in } \Omega,\\
\kappa \frac{\partial \overline{ \theta}_1}{\partial n} + L(\overline{ \theta}_1 - (T_1+T_0-\overline{\theta}_0)) = 0 & \mbox{ on } \Gamma.
\end{array}
\end{equation} 
The existence and uniqueness of solution to the above system is discussed in Subsection \ref{sec_theta1}.


Our goal is to show the existence of a solution to (\ref{main_system}) 
close to $(\bar v, \bar \rho, \bar \theta)$. 
Hence it is convenient to introduce the following quantity to measure the distance
of the data from the special solution:
\begin{equation} \label{D0}
\begin{array}{rcl}
D_0 &=& \|f\|_{L_p(\Omega)} + \Sigma_{i=1}^2\|b_i - \alpha \tau_i^{(1)}\|_{W^{1-1/p}_p(\Gamma)} + \|d - n^{(1)}\|_{W^{2-1/p}_p(\Gamma)}\\
&+& \|\rho_{in}-1\|_{W^{1}_p(\Gamma_{in})} + \|g\|_{W^{1-1/p}_p(\Gamma)} + \|T_1\|_{W^{1-1/p}_p(\Gamma)}.
\end{array}
\end{equation} 
We are now in a position to formulate our main result.
\begin{tw} \label{main_thm}
Assume that $D_0$ defined in (\ref{D0}) is small enough, $\kappa$ is large enough, 
$L$ is large enough on $\Gamma_0$, $\alpha$ large enough on $\Gamma_{in}$
and $p>3$. Then there exists 
a solution $(v,\rho,\theta)$ to system (\ref{main_system}) such that 
\begin{equation} \label{est_main}
\|v - \bar v\|_{W^2_p} + \|\rho - \bar \rho\|_{W^1_p} + \|\theta-\bar \theta\|_{W^2_p} \leq E(D_0).
\end{equation}
This solution is unique in the class of solutions satisfying (\ref{est_main}).
\end{tw}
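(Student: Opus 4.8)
The natural strategy is a fixed-point argument built on the linearized problem. The plan is to write the unknowns as perturbations $w=v-\bar v$, $\sigma=\rho-\bar\rho$, $\pi=\theta-\bar\theta$ of the special solution, substitute into \eqref{main_system}, and move all nonlinear and higher-order terms to the right-hand sides. This produces a coupled linear system: a Stokes-type problem with the slip boundary condition $({\bf S}(\nabla w)n)\cdot\tau_k+\alpha w\cdot\tau_k = \tilde b_k$ and $n\cdot w=\tilde d$ for the velocity, a transport equation $\bar v\cdot\nabla\sigma + (\mbox{lower order}) = \tilde h$ with inflow datum $\sigma|_{\Gamma_{in}}=\rho_{in}-1$ for the density (this is where $\bar v=[1,0,0]$ being nonzero and transversal to $\Gamma_{in}\cup\Gamma_{out}$ is crucial — the transport operator $\partial_{x_1}$ is then a genuine derivative along the axis and the inflow condition is well posed), and a steady convection–diffusion equation $(1+e_2)\partial_{x_1}\pi-\kappa\Delta\pi+(\mbox{l.o.t.}) = \tilde k$ with the Robin condition $\kappa\partial_n\pi+L\pi = \tilde g$ for the temperature. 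I would set up a map $\Phi:(w,\sigma,\pi)\mapsto(\hat w,\hat\sigma,\hat\pi)$ on the ball defined by \eqref{est_main} in $W^2_p\times W^1_p\times W^2_p$: given $(w,\sigma,\pi)$ in the ball, evaluate the nonlinearities, then solve the three linear problems to get the image.

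The key technical ingredients are the linear estimates, which I expect to be quoted or adapted from \cite{TP1}, \cite{PRS1}, \cite{Kw1}, \cite{Kw2}: (i) $W^2_p$ regularity for the Stokes system with slip–inflow boundary conditions, giving $\|\hat w\|_{W^2_p}\le C(\|\tilde f\|_{L_p}+\sum_k\|\tilde b_k\|_{W^{1-1/p}_p(\Gamma)}+\|\tilde d\|_{W^{2-1/p}_p(\Gamma)})$; (ii) $W^1_p$ estimates for the transport/continuity equation with prescribed inflow density, of the form $\|\hat\sigma\|_{W^1_p}\le C(\|\tilde h\|_{W^1_p}+\|\rho_{in}-1\|_{W^1_p(\Gamma_{in})})$, which requires the smallness/structure of the drift and may also require the large-$\alpha$ condition on $\Gamma_{in}$ to control the velocity near the inflow (this is the delicate point already isolated in \cite{TP1}); and (iii) $W^2_p$ estimates for the steady convection–diffusion problem with Robin data, where largeness of $\kappa$ and of $L$ on $\Gamma_0$ is what makes the constant in the elliptic estimate favorable — effectively the diffusion dominates the first-order transport term and the boundary term is controlled. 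Combining these with the algebra property of $W^1_p$ and the embedding $W^2_p\hookrightarrow W^{1,\infty}$ (here $p>3$ is used) to estimate the nonlinearities — products like $\sigma w\cdot\nabla w$, $\pi\,\partial_\rho\pi$-type terms, ${\bf S}(\nabla w):\nabla w$, and the coupling terms coming from variable coefficients — one shows $\Phi$ maps the ball $\{\|w\|_{W^2_p}+\|\sigma\|_{W^1_p}+\|\pi\|_{W^2_p}\le E(D_0)\}$ into itself once $D_0$ is small and $\kappa,L,\alpha$ are large; the nonlinear terms are quadratic (or better) in the radius, so for a suitable choice $E(D_0)\sim CD_0$ the self-mapping closes.

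For the contraction, I would estimate the difference $\Phi(w_1,\sigma_1,\pi_1)-\Phi(w_2,\sigma_2,\pi_2)$ using the same linear estimates applied to the differences of the right-hand sides; since the nonlinearities are at least quadratic, their Lipschitz constants on the small ball are $O(E(D_0))$, so $\Phi$ is a contraction for $D_0$ small — this simultaneously yields existence and the asserted uniqueness within the class \eqref{est_main}. Two points deserve care: first, the decomposition of $\bar\theta$ in \eqref{eqn_tilde_theta}–\eqref{eqn_theta1} must be controlled so that $\bar\theta$ itself stays close to the constant $T_0$ (the bound \eqref{est_tilde_theta} plus the analysis in Subsection~\ref{sec_theta1} do this), since the coefficients $p_i$, $e_i$ in the linearization are frozen at $(1,T_0)$ and all deviations must be absorbable; second, the corner singularities of $\Omega=\Omega_0\times[0,l]$ at $\partial\Omega_0\times\{0,l\}$ must be handled — this is done by the even/odd reflection trick alluded to around Lemma~\ref{lem_Neumann}, extending the domain periodically or symmetrically so that standard $W^2_p$ elliptic and Stokes theory applies.

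The main obstacle is ingredient (ii): obtaining the $W^1_p$ bound for the density from the steady continuity equation $\mbox{div}(\rho v)=0$ rewritten as a transport equation for $\sigma$, \emph{and} matching it correctly with the inflow condition and with the Stokes estimate. The loss-of-derivative issue — the density has only one derivative while it multiplies $\nabla v$ in the momentum equation — is resolved by the now-standard effective-flux / Stokes-decomposition device (writing $\rho$'s gradient in terms of quantities controlled by the elliptic part), but its interaction with the \emph{inflow} boundary condition is genuinely subtle: the characteristics of $\partial_{x_1}$ enter at $\Gamma_{in}$ and exit at $\Gamma_{out}$, so one needs the transversality $n^{(1)}=d<0$ on $\Gamma_{in}$ (hence the smallness of $d-n^{(1)}$ keeping the flow genuinely inflow there) and the large friction $\alpha$ on $\Gamma_{in}$ to close the velocity–density coupling without a derivative loss near the boundary. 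This is precisely the mechanism developed in \cite{TP1} for the barotropic case, and adapting it in the presence of the temperature equation — where $\theta$ also feeds into the pressure and hence into the drift of the transport equation — is the heart of the argument.
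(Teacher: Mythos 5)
Your overall architecture (perturbation around $(\bar v,\bar\rho,\bar\theta)$, linearization with $F,G,H$ on the right-hand side, $W^2_p\times W^1_p\times W^2_p$ estimates via slip-Lam\'e regularity, the effective-flux device to turn the continuity equation into a steady transport equation for $\sigma$, and largeness of $\kappa$, $L$, $\alpha$ to make the constants favorable) matches the paper. But there is one genuine gap: you claim that $\Phi$ is a contraction on the ball in $W^2_p\times W^1_p\times W^2_p$ because the nonlinearities are quadratic. This fails for the density component. When you subtract the continuity equations for two iterates, the frozen drift produces the commutator term $(w_1-w_2)\cdot\nabla\sigma_2$ on the right-hand side, and the transport estimate that gives $\hat\sigma$ in $W^1_p$ requires this source in $W^1_p$ — i.e.\ it requires $\nabla^2\sigma_2\in L_p$, one derivative more than the iteration controls. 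This is not the same loss of derivative you mention (density multiplying $\nabla v$ in the momentum equation, which the effective-flux decomposition does fix); it is a separate obstruction located precisely in the contraction step, and it cannot be absorbed by smallness.

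The paper's remedy is the standard two-norm scheme, which your plan is missing: one proves \emph{boundedness} of the iterates in $W^2_p\times W^1_p\times W^2_p$ (Lemma~\ref{lem_seq_bound}) but the \emph{Cauchy property} only in the weaker topology $W^1_2\times L_\infty(L_2)\times W^1_2$ (Lemma~\ref{lem_cauchy1}), where the troublesome term is harmless since $\|(u^n-u^m)\cdot\nabla\sigma^m\|_{L_2}\le C\|\sigma^m\|_{W^1_p}\|u^n-u^m\|_{W^1_2}$. Strong convergence in the weak norm plus boundedness in the strong norm then identifies the weak limit as a $W^2_p\times W^1_p\times W^2_p$ solution, and uniqueness in the class \eqref{est_main} follows from the same weak-norm contraction estimate applied to two solutions. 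A secondary, lesser point: you take solvability of the linearized system for granted; the paper has to construct the solution operator $S$ for the steady transport equation by integrating along the characteristics of the perturbed drift (Lemmas~\ref{lem_S}--\ref{lem_change}) and then run a Galerkin scheme in which $\sigma^N=S(G-\mathrm{div}\,u^N)$ and $\eta^N=R(u^N)$ are slaved to the velocity modes — this is where the conditions \eqref{3.1} and $U\cdot n|_{\Gamma_0}=0$ are actually used.
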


{\bf Notation.} We use standard notation for the Sobolev spaces, i.e. $W^1_p(\Omega)$. For the spaces defined on $\Omega$ we will skip the domain;
for example we write $L_2$ instead of $L_2(\Omega)$. A generic constant that is controlled (but not necessarily small) 
will be denoted by $C$, whereas $E$ will denote a small constant (i.e. which can be arbitrarily small for the data small
enough). We will also use the space $L_\infty((0,L),L_2(\Omega_0))$, for simplicity we denote it by $L_\infty(L_2)$. 

\section{Preliminaries}
\subsection{Auxiliary results}
In this section we collect some standard results which we use throughout the paper.
We start with the Sobolev imbedding theorem, let us state here the particular case which is used
throughout the paper (the general version with the proof can be found in \cite[Theorem 5.4]{Ad1})
\begin{lem}
Let $\Omega$ be defined as above and let $f \in W^1_p (\Omega)$ with $p>3$. Then $f \in L_\infty(\Omega)$
and there exist $C=C(p,\Omega)$ s.t.
$$
\|f\|_{L_\infty} \leq C \|f\|_{W^1_p}.
$$ 
\end{lem}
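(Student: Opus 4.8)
The statement is the special case $n=3$, $m=1$, $mp=p>n$ of the general Sobolev imbedding theorem, so the plan is simply to check that $\Omega$ falls under its hypotheses and then to quote it. First I would observe that $\Omega=\Omega_0\times[0,l]$ with $\Omega_0\subset\mathbb R^2$ smooth is a bounded domain in $\mathbb R^3$ with Lipschitz boundary: away from the two rims $\partial\Omega_0\times\{0\}$ and $\partial\Omega_0\times\{l\}$ the boundary is smooth (it is either a piece of the smooth lateral surface $\partial\Omega_0\times(0,l)$ or a flat disc $\Omega_0\times\{0,l\}$), while near a rim point the domain looks, after a local $C^\infty$ change of variables straightening $\partial\Omega_0$, like a quarter-space, which is Lipschitz. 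In particular $\Omega$ satisfies the cone condition. Therefore \cite[Theorem 5.4]{Ad1}, applied with $j=0$, $m=1$, $p$ as given and $mp=p>3=n$, yields the continuous imbedding $W^1_p(\Omega)\hookrightarrow C^{0,1-3/p}(\overline\Omega)\hookrightarrow C_B(\overline\Omega)$; restricting to $\Omega$ gives $f\in L_\infty(\Omega)$ together with the quantitative bound $\|f\|_{L_\infty}\le C\|f\|_{W^1_p}$, the constant depending only on $p$ and on the parameters of a cone for $\Omega$, i.e.\ on $p$ and $\Omega$.

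An equivalent, self-contained route I would be happy to present instead uses an extension operator. Since $\Omega$ is a bounded Lipschitz domain, there is a bounded linear extension $E\colon W^1_p(\Omega)\to W^1_p(\mathbb R^3)$ with $\|Ef\|_{W^1_p(\mathbb R^3)}\le C(p,\Omega)\|f\|_{W^1_p(\Omega)}$. Then Morrey's inequality on $\mathbb R^3$, valid precisely because $p>3$, gives $\|Ef\|_{L_\infty(\mathbb R^3)}\le C(p)\|Ef\|_{W^1_p(\mathbb R^3)}$; combining the two estimates and using $\|f\|_{L_\infty(\Omega)}\le\|Ef\|_{L_\infty(\mathbb R^3)}$ finishes the argument with $C=C(p,\Omega)$.

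The only point requiring a little care — the ``main obstacle'', such as it is — is the presence of the right-angle edges of the cylinder, where one must confirm that the domain is still Lipschitz (equivalently satisfies the cone/extension property) despite not being smooth; this is exactly the mild geometric check carried out in the first paragraph, after which everything reduces to the classical imbedding and no further work is needed.
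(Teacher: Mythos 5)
Your proposal is correct and matches the paper's treatment: the authors likewise dispose of this lemma by citing the general Sobolev imbedding theorem (Adams, Theorem 5.4), with the only domain-specific point being that the cylinder's edges still leave $\Omega$ Lipschitz, which you verify explicitly. Nothing further is needed.
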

Next we recall the Poincar\'e inequality. Among its different versions we will need the following two.
\begin{lem}
Let $\Gamma_1$ and $\Gamma_2$ denote two nontrivial parts of the boundary 
$\Gamma = \partial \Omega$ and $u \in W^1_2(\Omega)$. Then
\begin{equation} \label{Poin1}
\|u\|_{L_2(\Gamma_1)}^2 \leq C_P \big[ \|u\|_{L_2(\Gamma_2)}^2 + \|\nabla u\|_{L_2(\Omega)}^2\big]  
\end{equation}
and
\begin{equation} \label{Poin2}
\|u\|_{L_2(\Omega)}^2 \leq C_P \big[ \|u\|_{L_2(\Gamma_1)}^2 + \|\nabla u\|_{L_2(\Omega)}^2\big],  
\end{equation}
where $C_P=C_P(\Omega)$.
\end{lem}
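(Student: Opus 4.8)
The plan is to prove \eqref{Poin2} first by the standard compactness--contradiction argument, and then to deduce \eqref{Poin1} from \eqref{Poin2} together with the trace inequality.

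For \eqref{Poin2}, suppose no such constant $C_P$ exists. Then for each $n\in\mathbb{N}$ there is $u_n\in W^1_2(\Omega)$ with $\|u_n\|^2_{L_2(\Omega)}>n\big(\|u_n\|^2_{L_2(\Gamma_1)}+\|\nabla u_n\|^2_{L_2(\Omega)}\big)$; after rescaling we may assume $\|u_n\|_{L_2(\Omega)}=1$, so that $\|u_n\|^2_{L_2(\Gamma_1)}+\|\nabla u_n\|^2_{L_2(\Omega)}<1/n\to0$. In particular $(u_n)$ is bounded in $W^1_2(\Omega)$, hence, passing to a subsequence, it converges weakly in $W^1_2(\Omega)$, strongly in $L_2(\Omega)$ by the Rellich--Kondrachov theorem, and strongly in $L_2(\Gamma_1)$ by compactness of the trace operator $W^1_2(\Omega)\to L_2(\partial\Omega)$, to some limit $u$. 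These compact embeddings are available because $\Omega=\Omega_0\times[0,l]$ is a bounded Lipschitz domain (it has only edge singularities). Passing to the limit gives $\|u\|_{L_2(\Omega)}=1$, $\nabla u=0$ a.e.\ in $\Omega$, and $u=0$ on $\Gamma_1$. Since $\Omega$ is connected, $\nabla u=0$ forces $u$ to be constant, and since $\Gamma_1$ has positive surface measure (``nontrivial''), $u=0$ on $\Gamma_1$ forces that constant to vanish, contradicting $\|u\|_{L_2(\Omega)}=1$.

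To obtain \eqref{Poin1}, I would combine the trace inequality $\|u\|_{L_2(\Gamma_1)}\le\|u\|_{L_2(\partial\Omega)}\le C_{\rm tr}\|u\|_{W^1_2(\Omega)}$ with \eqref{Poin2} applied with the roles of $\Gamma_1$ and $\Gamma_2$ interchanged; the latter yields $\|u\|^2_{L_2(\Omega)}\le C_P\big(\|u\|^2_{L_2(\Gamma_2)}+\|\nabla u\|^2_{L_2(\Omega)}\big)$, whence
$\|u\|^2_{L_2(\Gamma_1)}\le C_{\rm tr}^2\big(\|u\|^2_{L_2(\Omega)}+\|\nabla u\|^2_{L_2(\Omega)}\big)\le C\big(\|u\|^2_{L_2(\Gamma_2)}+\|\nabla u\|^2_{L_2(\Omega)}\big)$,
which is \eqref{Poin1}. (Alternatively one can rerun the same contradiction argument directly, using \eqref{Poin2} at the outset to get boundedness of the minimizing sequence in $W^1_2(\Omega)$.)

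There is no real obstacle here; the only point requiring a word of care is that the cylindrical domain is not smooth, but since it is Lipschitz the Rellich--Kondrachov theorem and the compactness of the trace map still apply, which is all the argument uses. The dependence $C_P=C_P(\Omega)$ is automatic, as the compactness proof produces no explicit constant.
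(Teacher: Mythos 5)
Your proof is correct. Note that the paper does not actually prove this lemma at all --- it is merely recalled as a standard fact in the preliminaries --- so there is no argument to compare against; your compactness--contradiction proof of \eqref{Poin2} (normalization, Rellich--Kondrachov, weak continuity of the trace, connectedness forcing the limit to be a constant vanishing on a set of positive surface measure) is the canonical one, and the deduction of \eqref{Poin1} from \eqref{Poin2} via the trace inequality is sound. The only remark worth making is that you do not even need compactness of the trace operator: since $\|u_n\|_{L_2(\Gamma_1)}\to 0$ and traces converge weakly along with $u_n\rightharpoonup u$ in $W^1_2$, weak lower semicontinuity of the norm already gives $u|_{\Gamma_1}=0$.
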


\begin{lem} (interpolation inequality): \label{lem_int} \\
Let $2 < p <\infty$. Then for all $\epsilon >0 \quad \exists C(\epsilon,p,\Omega)$ such that $\forall f \in W^1_p(\Omega)$:
\begin{equation}  \label{int1}
\|f\|_{L_p} \leq \epsilon \|\nabla f\|_{L_p} + C \, \|f\|_{L_2}.
\end{equation}
\end{lem}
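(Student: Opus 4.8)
The plan is to argue by contradiction using the compact Sobolev embedding, i.e. an Ehrling-type argument. First I would negate the claim: if it fails, then for some fixed $\epsilon=\epsilon_0>0$ and every $n\in\mathbb{N}$ there is $f_n\in W^1_p(\Omega)$ with
\[
\|f_n\|_{L_p} > \epsilon_0\,\|\nabla f_n\|_{L_p} + n\,\|f_n\|_{L_2},
\]
and since this inequality is positively homogeneous of degree one in $f_n$, I may rescale so that $\|f_n\|_{L_p}=1$ for all $n$.

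Next I would read off the consequences of the normalization: from the displayed inequality, $\|\nabla f_n\|_{L_p}\le 1/\epsilon_0$ and $\|f_n\|_{L_2}<1/n$, so the sequence $(f_n)$ is bounded in $W^1_p(\Omega)$ while $f_n\to 0$ in $L_2(\Omega)$. Then I would invoke the Rellich--Kondrachov theorem: since $\Omega$ is a bounded Lipschitz domain (a finite cylinder over a smooth planar domain) and $2<p<\infty$, the embedding $W^1_p(\Omega)\hookrightarrow\hookrightarrow L_p(\Omega)$ is compact --- for $p\ge 3$ because $W^1_p(\Omega)$ embeds compactly into every $L_q(\Omega)$ with $q<\infty$, and for $2<p<3$ because $p<3p/(3-p)=p^{\ast}$, so $L_p$ sits strictly below the critical exponent. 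Hence, along a subsequence (not relabeled), $f_n\to f$ strongly in $L_p(\Omega)$, and in particular $\|f\|_{L_p}=1$.

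Finally, I would use that $\Omega$ is bounded and $p>2$, so that $\|g\|_{L_2}\le |\Omega|^{1/2-1/p}\|g\|_{L_p}$ for all $g$; thus strong convergence in $L_p$ also gives $f_n\to f$ in $L_2(\Omega)$, and combined with $\|f_n\|_{L_2}\to 0$ this forces $f=0$ a.e., contradicting $\|f\|_{L_p}=1$. This contradiction proves the lemma, with the constant $C(\epsilon,p,\Omega)$ being non-explicit (as is natural for a compactness proof).

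I do not expect a genuine obstacle: the statement is a soft consequence of the compact embedding, and the only points requiring care are verifying that $W^1_p(\Omega)\hookrightarrow\hookrightarrow L_p(\Omega)$ holds across the whole range $2<p<\infty$ (handled by the exponent check above) and the elementary inclusion $L_p(\Omega)\subset L_2(\Omega)$ on the bounded domain. An alternative, more quantitative route would be to first establish a Gagliardo--Nirenberg inequality $\|f\|_{L_p}\le C\,\|f\|_{W^1_p}^{a}\|f\|_{L_2}^{1-a}$ with some $a=a(p)\in(0,1)$, then apply Young's inequality to split $\|f\|_{W^1_p}^{a}\|f\|_{L_2}^{1-a}\le \delta\|f\|_{W^1_p}+C_\delta\|f\|_{L_2}$, use $\|f\|_{W^1_p}\le C(\|\nabla f\|_{L_p}+\|f\|_{L_p})$, and absorb the resulting term $C\delta\|f\|_{L_p}$ into the left-hand side for $\delta$ small; the compactness argument avoids this bookkeeping and the tracking of $a(p)$, so that is the one I would write out.
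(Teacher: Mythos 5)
Your compactness argument is correct: the negation-and-normalization step, the bound $\|\nabla f_n\|_{L_p}\le 1/\epsilon_0$ together with $\|f_n\|_{L_2}<1/n$, the compact embedding $W^1_p(\Omega)\hookrightarrow\hookrightarrow L_p(\Omega)$ on the bounded Lipschitz cylinder $\Omega=\Omega_0\times[0,l]$ (your exponent check for $2<p<3$ versus $p\ge 3$ is the right one), and the final contradiction via $L_p\subset L_2$ all go through. However, this is a genuinely different route from the paper's proof, which is precisely the ``quantitative alternative'' you sketch in your last paragraph: the authors write $\|f\|_{L_p}\le C(p)\|f\|_{L_q}^{\theta}\|f\|_{L_2}^{1-\theta}$ with $q$ the Sobolev exponent of $W^1_p$ (so $q=\infty$ for $p>3$, any finite $q$ for $p=3$, $q=3p/(3-p)$ for $p<3$), bound $\|f\|_{L_q}\le C(\|f\|_{L_p}+\|\nabla f\|_{L_p})$, and then apply Young's inequality and absorb the small multiple of $\|f\|_{L_p}$ into the left-hand side. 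The trade-off is the expected one: your Ehrling-type argument is shorter and avoids tracking the interpolation exponent $\theta(p)$, but yields a non-explicit $C(\epsilon,p,\Omega)$ by contradiction; the paper's argument is constructive and, in principle, gives $C(\epsilon)$ explicitly as a power of $\epsilon^{-1}$, which is harmless here since the constant is never quantified later. Both proofs are valid for the statement as used in the paper.
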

\noindent
\begin{proof}
The interpolation inequality in the Lebesgue spaces 
and the imbedding $W^1_p \hookrightarrow L_{q}$ with $q=\infty$ for $p>3$, $q<\infty$, arbitrary for $p=3$ and $q= \frac{3p}{3-p}$  for $p<3$ yield 
\begin{displaymath}  
\|f\|_{L_p} \leq C(p) \, \|f\|_{L_q}^{\theta} \, \|f\|_{L_2}^{1-\theta} \leq
C(p) (\|f\|_{L_p} + \|\nabla f\|_{L_p})^{\theta} \, \|f\|_{L_2}^{1-\theta}.
\end{displaymath}
Now application of the Young inequality leads to (\ref{int1}).
\end{proof}

Another result we use is the following version of the Korn inequality 
\begin{lem} \label{lem_Korn}
Let $\Omega \in C^{0,1}$, $\mu >0$ and if $\alpha =0$ then $\Omega$ is not axially symmetric. Then 
\begin{equation} \label{Korn}
\int_{\Omega}  \mu \big|\nabla u+ \nabla^T u -\frac 23 {\rm div}\, u {\bf I}\big|^2 \, dx + \int_{\Gamma} (\alpha (u \cdot \tau)^2 + d^2)  \,dS \geq C \, \|u\|_{W^1_2}^2.
\end{equation}
\end{lem}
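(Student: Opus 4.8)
The plan is to establish \eqref{Korn} by the classical compactness--contradiction scheme, which reduces the matter to a rigidity statement for the trace-free part of the symmetric gradient. First I would put the volume term into intrinsic form: since ${\rm tr}(\nabla u+\nabla^T u-\frac{2}{3}{\rm div}\,u\,{\bf I})=0$, the matrix $\nabla u+\nabla^T u-\frac{2}{3}{\rm div}\,u\,{\bf I}$ equals twice the deviatoric part $D^{\rm dev}(u):=\frac{1}{2}(\nabla u+\nabla^T u)-\frac{1}{3}{\rm div}\,u\,{\bf I}$ of the symmetric gradient, and a direct computation gives $|\nabla u+\nabla^T u-\frac{2}{3}{\rm div}\,u\,{\bf I}|^2=|\nabla u+\nabla^T u|^2-\frac{4}{3}({\rm div}\,u)^2=4\,|D^{\rm dev}(u)|^2$. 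Hence, writing $d=u\cdot n$ for the normal trace appearing in \eqref{Korn}, the claim is equivalent to $4\mu\int_\Omega|D^{\rm dev}(u)|^2\,dx+\int_\Gamma\big(\alpha(u\cdot\tau)^2+(u\cdot n)^2\big)\,dS\ge C\|u\|_{W^1_2}^2$ for all $u\in W^1_2(\Omega)$.

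Suppose this fails. Then there is a sequence $\{u_k\}\subset W^1_2(\Omega)$ with $\|u_k\|_{W^1_2}=1$ whose left-hand side tends to zero, so that $D^{\rm dev}(u_k)\to0$ in $L_2(\Omega)$ and $\int_\Gamma\big(\alpha(u_k\cdot\tau)^2+(u_k\cdot n)^2\big)\,dS\to0$. Passing to a subsequence, $u_k\rightharpoonup u$ weakly in $W^1_2(\Omega)$; by the Rellich theorem $u_k\to u$ in $L_2(\Omega)$, and since $\Omega\in C^{0,1}$ the trace operator $W^1_2(\Omega)\to L_2(\Gamma)$ is compact, so $u_k\to u$ in $L_2(\Gamma)$. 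Weak lower semicontinuity of the convex functional $w\mapsto\int_\Omega|D^{\rm dev}(w)|^2\,dx$ yields $D^{\rm dev}(u)=0$, while the boundary convergence yields $\alpha(u\cdot\tau)=0$ and $u\cdot n=0$ on $\Gamma$. Granting for the moment the rigidity claim below, namely that the only such $u$ is $u\equiv0$, we get $u_k\to0$ in $L_2(\Omega)$; the three-dimensional trace-free Korn inequality $\|\nabla w\|_{L_2}\le C\big(\|D^{\rm dev}(w)\|_{L_2}+\|w\|_{L_2}\big)$ — which holds on bounded Lipschitz domains in $\mathbb{R}^3$, in contrast to the two-dimensional case — then forces $\|\nabla u_k\|_{L_2}\to0$ as well, contradicting $\|u_k\|_{W^1_2}=1$.

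The crux, and the step I expect to be the main obstacle, is the rigidity claim: $D^{\rm dev}(u)=0$ in $\Omega$ together with the boundary conditions above forces $u\equiv0$. In three space dimensions the pointwise equation $D^{\rm dev}(u)=0$ is rigid — it confines $u$ to the fixed ten-dimensional space of conformal Killing fields of $\mathbb{R}^3$, spanned by the three translations, the three infinitesimal rotations $x\mapsto\omega\times x$, the dilation $x\mapsto x$, and the three special conformal fields $x\mapsto 2(b\cdot x)x-|x|^2 b$ — so in particular $u$ is a quadratic polynomial, real-analytic up to $\overline{\Omega}$. If $\alpha>0$, the boundary terms give $u=0$ on all of $\Gamma$, and a nonzero conformal Killing field cannot vanish on the entire boundary of a bounded $C^{0,1}$ domain (by unique continuation for the overdetermined elliptic system $D^{\rm dev}(u)=0$ from the two-dimensional surface $\Gamma$); hence $u\equiv0$. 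If $\alpha=0$, one only knows $u\cdot n=0$ on $\Gamma$, i.e. $u$ is a conformal Killing field everywhere tangent to $\Gamma$; checking the ten generators, for a bounded domain none of the translations, the dilation, or the special conformal fields can be everywhere tangent to $\Gamma$, so $u$ must be an infinitesimal rotation $\omega\times x$, and a nonzero such field is tangent to $\Gamma$ precisely when $\Omega$ is invariant under the rotations about the axis $\mathbb{R}\omega$, i.e. axially symmetric — excluded by hypothesis. (For the cylinder $\Omega=\Omega_0\times[0,l]$ of the paper this last point can also be verified by an explicit computation, reducing to the non-axial-symmetry of $\Omega_0$.) In each case $u\equiv0$, which closes the contradiction.

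Tracing the contradiction back through it — or, equivalently, applying the Peetre--Tartar lemma to the bounded, injective map $u\mapsto\big(D^{\rm dev}(u),\,\alpha(u\cdot\tau)|_\Gamma,\,(u\cdot n)|_\Gamma\big)$ (injectivity being exactly the rigidity just proved), the compact embedding $W^1_2(\Omega)\hookrightarrow L_2(\Omega)$, and the a priori bound supplied by the trace-free Korn inequality — produces \eqref{Korn} with a constant $C=C(\Omega,\mu,\alpha)$, which completes the proof.
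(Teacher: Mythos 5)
The paper itself offers no proof beyond the citation ``straightforward generalization of [JeNo, Lemma~4],'' and your compactness--contradiction scheme is exactly the standard route that reference takes; the algebraic identity $|\nabla u+\nabla^Tu-\frac23{\rm div}\,u\,{\bf I}|^2=4|D^{\rm dev}u|^2$, the compactness of the trace on a $C^{0,1}$ domain, the lower-semicontinuity step, and the appeal to the three-dimensional trace-free first Korn inequality $\|\nabla w\|_{L_2}\le C(\|D^{\rm dev}w\|_{L_2}+\|w\|_{L_2})$ to close the contradiction are all correct and well placed.

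The genuine gap is in the rigidity step for $\alpha=0$. A kernel element is an arbitrary linear combination $u=a+\omega\times x+\lambda x+2(b\cdot x)x-|x|^2b$, so ``checking the ten generators'' one at a time does not suffice: you must exclude, e.g., a field with $b\ne0$ mixed with translation and dilation parts, and the obvious moment identities coming from $u\cdot n=0$ (such as $\int_\Omega{\rm div}(\phi u)\,dx=0$) only yield finitely many linear constraints and do not force $(a,\lambda,b)=0$ for a general domain. The correct argument is dynamical: the flow of $u$ is a one-parameter group of M\"obius transformations preserving the bounded domain $\Omega$, and a non-elliptic (loxodromic or parabolic) such group cannot preserve a bounded \emph{Lipschitz} domain --- and the Lipschitz hypothesis is genuinely needed here, since the image under inversion of an infinite cylinder $D\times{\mathbb R}$ with $0\notin\overline D$ is a bounded domain with a cusp at the origin that \emph{is} invariant under a special conformal flow. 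Alternatively, for the paper's cylinder $\Omega_0\times[0,l]$ the two flat faces give $u_1=0$ on $\{x_1=0\}$ and $\{x_1=l\}$, which by the explicit quadratic form of conformal Killing fields immediately kills $\lambda$, $b$ and all but the cross-sectional rigid motion, reducing to the non-axisymmetry of $\Omega_0$. A smaller point: in the case $\alpha>0$ your ``unique continuation'' phrase is not the right tool (you have no Cauchy data); the clean statement is that the zero set of a nonzero conformal Killing field is the fixed-point set of a nontrivial M\"obius flow, hence at most a line or a circle, so it cannot contain the two-dimensional surface $\Gamma$. Since the paper assumes $\alpha>0$ throughout, this easier case is the one actually used, but the lemma as stated includes $\alpha=0$ and your argument for it needs the repair above.
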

\begin{proof}
It is a straightforward generalization of \cite[Lemma 4]{JeNo}.
\end{proof}
We also apply the following well-known result in finite dimensional Hilbert spaces
(the proof can be found in \cite{Te}):
\begin{lem}  \label{lem_P}
Let $X$ be a finite dimensional Hilbert space and let $P$: $X \to X$ be a continuous operator
satisfying
\begin{equation}  \label{lem_P_1}
\exists M>0: \quad (P(\xi),\xi) > 0 \quad \textrm{for} \quad \|\xi\| = M.
\end{equation}
Then
$
\exists \xi^*: \quad \|\xi^*\| \leq M \quad \textrm{and} \quad P(\xi^*) = 0.
$
\end{lem}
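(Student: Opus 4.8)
The plan is to argue by contradiction and reduce the statement to the no-retraction form of the Brouwer fixed-point theorem. Assume that $P(\xi)\neq 0$ for every $\xi$ in the closed ball $\overline{B}_M=\{\xi\in X:\|\xi\|\leq M\}$. Then the map $\xi\mapsto -M\,P(\xi)/\|P(\xi)\|$ is a well-defined continuous map from $\overline{B}_M$ into the sphere $\partial B_M$, hence (composing with the identity inclusion) a continuous map $F:\overline{B}_M\to\overline{B}_M$. Since $X$ is a finite-dimensional Hilbert space, it is linearly homeomorphic to some $\mathbb{R}^n$ with its Euclidean structure, so $\overline{B}_M$ is homeomorphic to a closed Euclidean ball and the Brouwer fixed-point theorem applies: $F$ has a fixed point $\xi_0\in\overline{B}_M$, i.e. $\xi_0=-M\,P(\xi_0)/\|P(\xi_0)\|$. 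Taking norms gives $\|\xi_0\|=M$, so $\xi_0$ lies on the sphere where hypothesis \eqref{lem_P_1} is in force.

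Now compute the inner product $(P(\xi_0),\xi_0)$. From $\xi_0=-M\,P(\xi_0)/\|P(\xi_0)\|$ we get $P(\xi_0)=-\frac{\|P(\xi_0)\|}{M}\,\xi_0$, and therefore
\begin{equation*}
(P(\xi_0),\xi_0)=-\frac{\|P(\xi_0)\|}{M}\,(\xi_0,\xi_0)=-\frac{\|P(\xi_0)\|}{M}\,\|\xi_0\|^2=-M\,\|P(\xi_0)\|<0,
\end{equation*}
the strict inequality because $P(\xi_0)\neq 0$ by our contradiction hypothesis. This directly contradicts \eqref{lem_P_1}, which asserts $(P(\xi),\xi)>0$ for all $\xi$ with $\|\xi\|=M$. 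Hence the assumption was false: there exists $\xi^*\in\overline{B}_M$ with $P(\xi^*)=0$, and this $\xi^*$ automatically satisfies $\|\xi^*\|\leq M$.

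The only genuine ingredient is the Brouwer fixed-point theorem (equivalently, the nonexistence of a continuous retraction of a closed ball onto its boundary); everything else is a short computation. The one point requiring a word of care is the passage from the abstract finite-dimensional Hilbert space $X$ to $\mathbb{R}^n$: one fixes an orthonormal basis, which gives an isometric isomorphism $X\cong\mathbb{R}^n$ carrying $\overline{B}_M$ to the Euclidean ball of radius $M$ and preserving both continuity and the inner product, so the contradiction derived in $\mathbb{R}^n$ transfers back verbatim. No smallness or structural assumptions on $P$ beyond continuity and the sign condition on the sphere are used, so the proof is complete once Brouwer's theorem is invoked; a reference such as \cite{Te} can be cited for that.
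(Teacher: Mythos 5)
Your proof is correct and is precisely the standard argument: the paper itself gives no proof but defers to \cite{Te}, where the lemma is established by exactly this contradiction via Brouwer's fixed point theorem applied to $\xi\mapsto -M\,P(\xi)/\|P(\xi)\|$. The computation $(P(\xi_0),\xi_0)=-M\|P(\xi_0)\|<0$ and the identification of $X$ with $\mathbb{R}^n$ via an orthonormal basis are both handled properly, so nothing is missing.
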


\noindent
Our proofs will be based on the $L_p$ regularity of elliptic problems with Neumann
and slip boundary conditions. These are classical results, in case of cylindrical domain
we only have to deal with the singularities of the boundary, for the sake of completeness we give the proofs.
The first result concerns the Neumann problem. 
\begin{lem} \label{lem_Neumann}
Let $1<p<\infty$, $f \in L_p(\Omega)$, $d \in W^{1-1/p}_p(\Gamma)$ and 
$\int_{\Omega} f \,dx = \int_{\Gamma} d \,dS$. Let $C_0 \in {\mathbb R}$. 
Then there exists a solution $u \in W^2_p(\Omega)$ to the problem 
\begin{equation}
\begin{array}{lcr}
\Delta u = f & \mbox{ in } & \Omega, \\[2pt]
\frac{\partial u}{\partial n} = d & \mbox{ on } & \Gamma, \\[2pt]
\frac{1}{|\Omega|}\int_\Omega u \, dx & = C_0,
\end{array}
\end{equation}
and 
\begin{equation}
\|u-C_0\|_{W^2_p} \leq C \, \big[ \|f\|_{L_p} + \|d\|_{W^{1-1/p}_p(\Gamma)} \big].
\end{equation}
Moreover, this solution is unique in the given regularity class.
\end{lem}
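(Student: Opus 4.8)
The plan is to reduce the Neumann problem on the cylinder $\Omega = \Omega_0 \times [0,l]$ to a problem on a smooth domain where the classical $L_p$ elliptic theory applies directly, using a reflection (symmetrization) argument to handle the edge singularities of $\partial\Omega$. First I would note that the compatibility condition $\int_\Omega f\,dx = \int_\Gamma d\,dS$ is exactly what is needed for solvability, and that the normalization $\frac{1}{|\Omega|}\int_\Omega u\,dx = C_0$ fixes the additive constant, so it suffices to prove the estimate for the case $C_0 = 0$ and then translate by $C_0$. The only genuine difficulty is that $\Omega$ is not a $C^2$ domain: it has two edges, namely $\partial\Omega_0 \times \{0\}$ and $\partial\Omega_0 \times \{l\}$, where the lateral wall $\Gamma_0 = \partial\Omega_0 \times [0,l]$ meets the flat faces $\Gamma_{in}$ and $\Gamma_{out}$ at a right angle. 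Away from these edges the boundary is smooth, so interior and flat-boundary $L_p$ estimates (e.g.\ Agmon--Douglis--Nirenberg) give local $W^2_p$ bounds; the whole issue is localized near the edges.

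The key step is the even reflection across the faces $x_3 = 0$ and $x_3 = l$. Concretely, I would extend $f$ and $d$ evenly in the $x_3$ variable across $x_3 = 0$ and then periodically (or by a further even reflection across $x_3 = l$), obtaining data on the domain $\widetilde\Omega = \Omega_0 \times \mathbb{T}$ (or $\Omega_0 \times (-l, l)$ with periodic identification), whose boundary is now just $\partial\Omega_0 \times \mathbb{T}$ — a $C^2$ surface with no edges, since $\Omega_0$ is smooth. Because the Neumann data on $\Gamma_{in} \cup \Gamma_{out}$ was prescribed on faces perpendicular to the reflection direction, the even extension of $u$ is consistent: a function even in $x_3$ automatically satisfies $\partial_{x_3} u = 0$ on $x_3 = 0, l$, which is precisely the homogeneous Neumann condition there, and the reflected $f$ is the correct right-hand side for $\Delta$ on $\widetilde\Omega$. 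Wait — here one must be careful: the original Neumann datum on $\Gamma_{in}\cup\Gamma_{out}$ need not be zero. So instead I would first subtract off a lift: choose $w \in W^2_p(\Omega)$ with $\frac{\partial w}{\partial n} = d$ on $\Gamma_{in}\cup\Gamma_{out}$ and $\frac{\partial w}{\partial n}=0$ on $\Gamma_0$ (this is a trace extension on the smooth pieces, done near each face separately and glued by a partition of unity), controlled by $\|d\|_{W^{1-1/p}_p}$; then $u - w$ solves a Neumann problem with homogeneous data on the two faces and modified data on $\Gamma_0$, and now the even reflection in $x_3$ applies cleanly.

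After reflection I would solve $\Delta \widetilde u = \widetilde f$ on the smooth domain $\widetilde\Omega$ with the (now still possibly inhomogeneous) Neumann condition on $\partial\Omega_0 \times \mathbb{T}$, invoking the standard $L_p$ theory for the Neumann Laplacian on a $C^2$ domain: existence under the compatibility condition, uniqueness up to a constant, and the estimate $\|\widetilde u - \overline{\widetilde u}\|_{W^2_p(\widetilde\Omega)} \le C(\|\widetilde f\|_{L_p} + \|\widetilde d\|_{W^{1-1/p}_p})$. By uniqueness, $\widetilde u$ inherits the evenness in $x_3$ of the data (since $x_3 \mapsto \widetilde u(x_1,x_2,-x_3)$ solves the same problem), hence its restriction to $\Omega$ lies in $W^2_p(\Omega)$, satisfies the homogeneous Neumann condition on the faces, and satisfies the original equation and boundary condition on $\Gamma_0$. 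Adding back $w$ and adjusting the additive constant to meet $\frac{1}{|\Omega|}\int_\Omega u = C_0$ gives the claimed solution and estimate. Uniqueness in $W^2_p(\Omega)$ with the mean-value normalization follows by the same reflection: a solution with zero data reflects to a harmonic function on $\widetilde\Omega$ with zero Neumann data, hence a constant, hence zero after normalization. The main obstacle, as indicated, is the edge regularity — but the right-angle meeting of the faces with the lateral wall makes even reflection produce a genuinely smooth domain, so no weighted spaces or corner asymptotics are needed; one only has to be careful that the reflection is compatible with the boundary operator, which forces the preliminary subtraction of the lift $w$.
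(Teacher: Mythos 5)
Your proposal is correct and follows essentially the same route as the paper: subtract a $W^2_p$ lift of the Neumann datum on the flat faces so the data there become homogeneous, then use even (symmetric) reflection across those faces — which preserves the homogeneous Neumann condition and turns the cylinder into a smooth domain — and invoke classical $L_p$ elliptic theory, with uniqueness handled the same way. The only differences are cosmetic (you reflect the data and recover evenness of the solution by uniqueness, the paper reflects the solution itself; you also spell out the second reflection/periodization at the far face, which the paper leaves implicit).
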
   
\begin{proof} In fact, the only difficulty we have to overcome are the
singularities of the boundary.
 If $d|_{\Gamma_{in} \cup \Gamma_{out}} = 0$, then we can simply extend the weak solution
using symmetric reflection that preserves the homogeneous Neumann condition. 
In the neighbourhood of $\Gamma_{in}$ it will be defined as
\begin{equation}
E^v_s(u)(x) = \left\{ \begin{array}{c}
u(x), \quad x_1 \geq 0, \\
u(\tilde x), \quad x_1 < 0,
\end{array} \right.
\end{equation}
where $x = (x_1,x_2,x_3)$ and $\tilde x= (-x_1,x_2,x_3)$.
        
For $E^v_s(u)$ we obtain a Neumann problem in a domain with smooth boundary that we solve using classical
elliptic theory.

If $d$ does not vanish on $\Gamma_{in} \cup \Gamma_{out}$, we reduce the problem to the
previous case. Let us consider $\Gamma_{in}$, 
on $\Gamma_{out}$ we proceed the same way. 
Introducing an extension $u_0 \in W^2_p(Q)$ such that 
$\frac{\partial u_0}{\partial n}|_{\Gamma_{in}} = d$ and $\|u_0\|_{W^2_p} \leq C \|d\|_{W^{1-1/p}_p(\Gamma)}$, we consider $\tilde u = u - u_0$
that satisfies
\begin{displaymath}
\begin{array}{c}
\Delta \tilde u = \tilde f, \\
\frac{\partial \tilde u}{\partial n}|_{\Gamma} = 0,
\end{array}
\end{displaymath}
where 
$\|\tilde f\|_{L_p} \leq C \, [ \|f\|_{L_p} + \|d\|_{W^{1-1/p}_p(\Gamma)}]$.
\end{proof}

The second result concerns the Lam\'e system with slip boundary conditions:
\begin{equation} \label{Lam\'e}
\begin{array}{lcr}
-\mu {\rm div}\, (\nabla u + \nabla^T u -{\rm div}\, u {\bf I}) - \lambda \nabla {\rm div}\, u = F & \mbox{in} & \Omega, \\
2\mu ({\bf D}( u) n)\cdot \tau_i + \alpha \ u \cdot \tau_i = B_i, \quad i=1,2
&\mbox{on} & \Gamma, \\
n\cdot  u = 0 & \mbox{on} & \Gamma.\\
\end{array}
\end{equation}
The following lemma gives existence of a solution and the maximal regularity elliptic estimate
that we apply in our proofs.
\begin{lem} \label{lem_Lame}
Under the assumptions of Theorem \ref{main_thm} 
 there exists a unique $u \in W^2_p(\Omega)$ solving 
(\ref{Lam\'e}) with
\begin{equation} \label{est_lame_3d}
\|u\|_{W^2_p} \leq C \big[\|F\|_{L_p} + \sum_{i=1}^2\|B_i\|_{W^{1-1/p}_p(\Gamma)}\big].
\end{equation}
\end{lem}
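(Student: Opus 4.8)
The plan is to establish Lemma~\ref{lem_Lame} by reducing the cylindrical domain with its edge singularities to the smooth-boundary case, where maximal $L_p$ regularity for the Lam\'e system with Navier slip boundary conditions is classical. First I would treat the homogeneous case $B_i=0$. Near the flat inflow and outflow parts $\Gamma_{in},\Gamma_{out}$, which meet the lateral wall $\Gamma_0$ orthogonally, I would use the even reflection $E^v_s$ in the $x_1$-variable for the tangential components of $u$ and an odd reflection for the normal component $u\cdot n = u^{(1)}$; since the boundary conditions on $\Gamma_{in}\cup\Gamma_{out}$ with $L=0$, and the geometry, are compatible with this reflection (the slip conditions there reduce to $\partial_{x_1}u^{(2)}=\partial_{x_1}u^{(3)}=0$ and $u^{(1)}=0$ if we arrange $B_i$ and $d$ to vanish — see below), the reflected field solves a Lam\'e-type system on a domain that is smooth across those faces, and we are reduced to the singularities of $\partial\Omega_0\times\{0,l\}$. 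Those remaining edges can be handled by the same classical elliptic theory once we flatten the lateral boundary, or simply quoted from the cited existence theory for slip problems in Lipschitz cylinders; the key point is that the estimate \eqref{est_lame_3d} survives the reflection because the reflection is bounded on $W^2_p$ and on the trace spaces.

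Second, to reduce the inhomogeneous boundary data $B_i$ to zero, I would construct a lifting $u_0\in W^2_p(\Omega)$ with $2\mu({\bf D}(u_0)n)\cdot\tau_i = B_i$ on $\Gamma$, $n\cdot u_0 = 0$ on $\Gamma$, and $\|u_0\|_{W^2_p}\le C\sum_i\|B_i\|_{W^{1-1/p}_p(\Gamma)}$, exactly as in the Neumann case of Lemma~\ref{lem_Neumann}; such a lifting exists by the standard trace theorem because the map $u\mapsto((2\mu{\bf D}(u)n\cdot\tau_i)|_\Gamma,(n\cdot u)|_\Gamma)$ is surjective from $W^2_p$ onto the appropriate product of boundary spaces (this is part of the Lopatinski--Shapiro verification for the slip system). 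Then $\tilde u = u - u_0$ solves the homogeneous-boundary problem with right-hand side $\tilde F = F - \mathcal{L}u_0$, where $\mathcal{L}$ denotes the Lam\'e operator, and $\|\tilde F\|_{L_p}\le C(\|F\|_{L_p}+\sum_i\|B_i\|_{W^{1-1/p}_p(\Gamma)})$, so the homogeneous estimate gives \eqref{est_lame_3d}.

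For existence and uniqueness I would argue as follows. Uniqueness: if $F=0$ and $B_i=0$, test the weak formulation with $u$ itself; the boundary term yields $\int_\Gamma\alpha(u\cdot\tau)^2\,dS$ and, together with the bulk term $\mu\int_\Omega|{\bf D}(u) - \tfrac13{\rm div}\,u\,{\bf I}|^2\,dx$, the Korn inequality of Lemma~\ref{lem_Korn} (applicable under the hypotheses of Theorem~\ref{main_thm}, since $\alpha>0$ on $\Gamma_{in}$ makes the boundary term nondegenerate) forces $\|u\|_{W^1_2}=0$. Existence in $W^1_2$ then follows from Lax--Milgram using the same coercivity, and the $W^2_p$ regularity and estimate come from the localization argument above. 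The main obstacle is purely technical rather than conceptual: verifying that the even/odd reflection across $\Gamma_{in}$ and $\Gamma_{out}$ genuinely produces a solution of a second-order elliptic system (with the same ellipticity constants) on the enlarged smooth domain, i.e.\ checking that the reflected stress tensor is continuous across the reflection plane and that no spurious distributional terms appear on $\{x_1=0\}$; this hinges on the orthogonality of $\Gamma_{in}$ to $\Gamma_0$ and on the specific structure of ${\bf S}(\nabla v)$, and must be done componentwise. The edge singularity at $\partial\Omega_0\times\{0,l\}$ is then a corner of opening $\pi/2$ (after reflection, a flat interior point in the $x_1$ direction combined with the Lipschitz lateral boundary), which is within the scope of the classical $L_p$ theory for any $1<p<\infty$.
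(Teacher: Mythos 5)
Your proposal follows essentially the same route as the paper: weak existence from ellipticity (Korn plus Lax--Milgram), a lifting of the inhomogeneous data, and $W^2_p$ regularity near the edges $\partial\Omega_0\times\{0,l\}$ via exactly the paper's extension $E^v_{as}$ (odd reflection of the normal component $u^1$, even reflection of the tangential components across $\Gamma_{in}$ and $\Gamma_{out}$), after which the smooth-boundary elliptic theory applies. The only caveat --- which the paper's own sketch glosses over as well --- is that with $\alpha\neq 0$ on $\Gamma_{in}$ the homogeneous slip condition is $2\mu({\bf D}(u)n)\cdot\tau_j=-\alpha\, u\cdot\tau_j$ rather than $\partial_{x_1}u^{(j)}=0$, so the reflected traction is continuous across $\{x_1=0\}$ only modulo this lower-order boundary term, which must be carried as data and absorbed by interpolation; this is precisely the componentwise check you already identify as the main technical point.
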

\begin{proof} 
Under the assumptions on $\mu$ and $\lambda$ (\ref{Lam\'e}) is elliptic so we easily
get a weak solution. 
The only problem we encounter showing regularity of the weak solution 
are the singularities of the boundary
on the junctions of $\Gamma_0$ with $\Gamma_{in}$ and $\Gamma_{out}$.
These singularities can again be dealt  using the symmetry arguments.
Hence we define the operator $E^v_{as}$ extending a vector field defined
for $\{x: x_1 \geq 0\}$ on the whole space as 
\begin{equation}
E^v_{as}(u)(x) = \left\{ \begin{array}{c}
u(x), \quad x_1 \geq 0, \\
\tilde u(\tilde x), \quad x_1 < 0,
\end{array} \right.
\end{equation}        
where $\tilde x$ is as above and 
$\tilde u(\tilde x) = [-u^1(x),u^2(x),u^3(x)]$.
Then we have 
\begin{equation}
\begin{array}{c}
-{\rm div}\, \big(\nabla E^v_{as}(v) + \nabla^T E^v_{as}(v) - \frac 23 {\rm div}\, v {\bf I}\big)  + \lambda \nabla {\rm div}\, E^v_{as}(v) \\
= -{\rm div}\, \big(\nabla v + \nabla^T v - \frac 23 {\rm div}\, v {\bf I}\big)  + \lambda \nabla {\rm div}\, v,
\end{array}
\end{equation}
and on the plane ${x_1=0}$ the extension $E^v_{as}$ preserves the slip boundary conditions, i.e.
$$
2\mu  ({\bf D}( E^v_{as}(v) )n) \cdot \tau_i +\alpha (\ E^v_{as}(v) \cdot \tau_i) = 2 \mu  ({\bf D}( v )n) \cdot \tau_i +\alpha (v \cdot \tau_i) \\
$$
and $n \cdot E^v_{as}(v) = n \cdot  v$. Now we can show higher regularity of $E^v_{as}(u)$, extension
of the weak solution to (\ref{Lam\'e}), and since $E^v_{as}$ preserves the boundary conditions
we conclude that $u$ is a strong solution to (\ref{Lam\'e}) satisfying the boundary conditions
on $\Gamma_{in}$. Application of analogous antisymmetric extension on $\Gamma_{out}$
completes the proof. 
\end{proof}

\subsection{Remarks on the special solution} \label{sec_theta1}

Our next aim is to construct solutions to \eqref{eqn_theta}. Recall that
$$
\overline{\theta} = T_0 + (\overline{\theta}_0-T_0) + \overline{\theta}_1,
$$
where $\overline{\theta}_0$ solves \eqref{eqn_tilde_theta}. Therefore by Lemma \ref{lem_Neumann} we have
\begin{equation} \label{2.12b}
\kappa \|\overline{\theta}_0 -T_0\|_{W^2_p} \leq C (c_g + \|g\|_{W^{1-1/p}_p(\Gamma)}) \leq C \|g\|_{W^{1-1/p}_{p}(\Gamma)},
\end{equation}
as well as by the Lax-Milgram theorem
\begin{equation} \label{2.12bb}
\kappa \|\overline{\theta}_0 -T_0\|_{W^1_2} \leq  C \|g\|_{L_{2}(\Gamma)}.
\end{equation}

Next for $\overline{\theta}_1$ we have problem (\ref{eqn_theta1}). Our aim is to show existence of a weak solution to this problem as well as its regularity. As the problem is linear, the existence  and uniqueness of a weak solution is direct consequence of corresponding a priori estimates.  First we show estimates in $W^1_2$, then in $W^2_p$. Since we will require $\kappa$ and $L$ large, we have to control how all constants depend on them. For the sake of simplicity we assume that $\kappa$ and $L$ are approximately of the same size.

Multiplying  (\ref{eqn_theta1}) by $\theta_1$ and integrating over $\Omega$ we get
$$
\begin{array}{c}
\kappa \int_{\Omega} |\nabla \overline{\theta}_1|^2 \,dx + \int_{\Gamma_0} L \overline{\theta}_1^2 \,dS + \frac{1}{2}(1+e_2)\int_{\Gamma_{out}} \overline{\theta}_1^2 \,dS \\
= \frac{1}{2}(1+e_2) \int_{\Gamma_{in}} \overline{\theta}_1^2 \,dS + \int_{\Gamma_0} L \overline{\theta}_1 (T_1+T_0-\overline{\theta}_0) \,dS
- \int_{\Omega} ((1+e_2)\partial_{x_1}\overline{ \theta}_0 + c_g) \overline{\theta}_1 \,dx. 
\end{array}
$$
All terms on the l.h.s. are nonnegative. Applying the Poincar\'e inequality to the first term on the r.h.s. and the Young inequality to the other two ones we get
\begin{displaymath}
\begin{array}{c}
\kappa \int_{\Omega} |\nabla \overline{\theta}_1|^2 \,dx + L\int_{\Gamma_0} \overline{\theta}_1^2 \,dS  \\
\leq C_P \Big(\int_{\Gamma_0} \overline{\theta}_1^2 \,dS + \int_{\Omega} |\nabla \overline{\theta}_1|^2 \,dx \Big) + \frac L2 \int_{\Gamma_0} \overline{\theta}_1^2 \,dS + \frac \kappa 2\int_{\Omega} |\nabla \overline{\theta}_1|^2 \,dx \\ 
+ C \Big(L \int_{\Gamma_0} |T_1+ T_0-\overline{\theta}_0|^2 \,dS + \int_{\Omega} |\nabla \overline{\theta}_1|^2 \,dx+ c_g^2\Big). 
\end{array}
\end{displaymath}
Therefore we get, for $\kappa$ and $L$ sufficiently large with respect to $C_p$,
$$
\frac \kappa 4 \int_{\Omega} |\nabla \overline{\theta}_1|^2 \,dx + \frac L4\int_{\Gamma_0} \overline{\theta}_1^2 \,dS \leq 
C \big(\|g\|_{L_2(\Gamma)}^2 + L \|\overline{\theta}_0-T_0\|_{W^1_2}^2 + L \|T_1\|_{L_2(\Gamma)}^2 + \|\overline{\theta}_0-T_0\|_{L^2(\Gamma)}^2\big). 
$$ 
Hence, using also \eqref{2.12bb}
\begin{equation} \label{2.12c}
\|\overline{\theta}_1\|_{W^1_2} \leq C \big(\|g\|_{L_2(\Gamma)} + \|T_1\|_{L_2(\Gamma)} \big),
\end{equation}
where $C$ is a fixed constant independent of $\kappa$ and $L$, provided $\kappa \sim L$.
Uniqueness is a direct consequence of \eqref{2.12c}; if the data are zero, then the solution is zero as well.

Finally, proceeding similarly as above we will deduce the existence of strong solutions together with the estimates of the corresponding norms. First we rewrite  (\ref{eqn_theta1}) to the form
\begin{equation} \label{2.21}
\begin{array}{lr}
\kappa \Delta \overline{\theta}_1 = (1+e_2)\partial_{x_1}\overline{\theta}_1 + (1+e_2)\partial_{x_1}\overline{\theta}_0 + c_g & \mbox{ in } \Omega, \\[2pt]
\kappa \frac{\partial \overline{\theta}_1}{\partial n} = -L(\overline{\theta}_1 + \overline{\theta}_0 - T_0 -T_1 ) & \mbox{ on } \Gamma,
\end{array}
\end{equation}
and using Lemma \ref{lem_Neumann}
$$
\begin{array}{c}
\kappa \|\overline{\theta}_1\|_{W^2_p} \leq C \big(\|\overline{\theta}_1\|_{W^1_p} + \|\overline{\theta}_0\|_{W^1_p} + \|g\|_{L_2(\Gamma)} \\
+ L \|\overline{\theta}_1\|_{W^{1-1/p}_p(\Gamma)} + L \|T_1\|_{W^{1-1/p}_p(\Gamma)} + L \|\overline{\theta}_0-T_0\|_{W^{1-1/p}_p(\Gamma)}\big).
\end{array}
$$
Now, applying the Poincar\'e inequality in the form
$$
\|\overline{\theta}_1\|_{W^1_p} \leq \varepsilon  \|\overline{\theta}_1\|_{W^2_p} + C(\varepsilon)\|\overline{\theta}_1\|_{W^1_2}
$$
and using \eqref{2.12c} we end up with

\begin{lem} \label{l7}
Let $T_1 \in W^{1-1/p}_p(\Gamma)$, $g \in W^{1-1/p}_p(\Gamma)$, $p>3$ and let $\kappa$, $L$ be sufficiently large. Then there exist a unique weak solution $\theta_1$ 
to (\ref{eqn_theta1}) such that
\begin{equation} \label{est_theta1_1}
\|\overline{\theta}_1\|_{W^1_2} \leq C \big[ \|T_1\|_{L_2(\Gamma)} + \|g\|_{L_2(\Gamma)}\big].
\end{equation}
Moreover, the solution is strong, i.e. $\overline{\theta}_1 \in W^2_p$ and
\begin{equation} \label{est_theta1_2}
\|\overline{\theta}_1\|_{W^1_2} \leq C \big[ \|T_1\|_{W^{1-1/p}_p(\Gamma)} + \|g\|_{W^{1-1/p}_p(\Gamma)}\big].
\end{equation}
Here, the constants $C$ are independent of $\kappa$ and $L$.
\end{lem}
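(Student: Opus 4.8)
Since \eqref{eqn_theta1} is linear, the plan is to establish a priori estimates, first in $W^1_2(\Omega)$ and then in $W^2_p(\Omega)$, and to deduce existence and uniqueness from them. For the weak ($W^1_2$) solution I would look at the bilinear form associated with \eqref{eqn_theta1}, whose principal part is $\kappa\int_\Omega\nabla\overline{\theta}_1\cdot\nabla\varphi\,dx$ together with the boundary term $L\int_{\Gamma_0}\overline{\theta}_1\varphi\,dS$, the first--order term $(1+e_2)\int_\Omega\partial_{x_1}\overline{\theta}_1\,\varphi\,dx$ being a lower order perturbation; once coercivity on $W^1_2(\Omega)$ is checked, existence and uniqueness of the weak solution follow from the Lax--Milgram theorem in its nonsymmetric form. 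Elliptic regularity, namely Lemma \ref{lem_Neumann}, will then turn the weak solution into a strong one, once the first--order and the boundary terms are moved to the right--hand side.

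\emph{Step 1: the $W^1_2$ estimate.} I would test \eqref{eqn_theta1} with $\overline{\theta}_1$ and integrate by parts. Since $\bar v=[1,0,0]$ is divergence free,
\[
(1+e_2)\int_\Omega\partial_{x_1}\overline{\theta}_1\,\overline{\theta}_1\,dx=\tfrac12(1+e_2)\Big(\int_{\Gamma_{out}}\overline{\theta}_1^2\,dS-\int_{\Gamma_{in}}\overline{\theta}_1^2\,dS\Big),
\]
so after inserting the boundary condition the good--sign contributions on the left are $\kappa\|\nabla\overline{\theta}_1\|_{L_2}^2$, $L\int_{\Gamma_0}\overline{\theta}_1^2\,dS$ and $\tfrac12(1+e_2)\int_{\Gamma_{out}}\overline{\theta}_1^2\,dS$. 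The only genuinely dangerous term is $\tfrac12(1+e_2)\int_{\Gamma_{in}}\overline{\theta}_1^2\,dS$ on the right; I would estimate it by the Poincar\'e inequality \eqref{Poin1} with $\Gamma_1=\Gamma_{in}$, $\Gamma_2=\Gamma_0$, and then absorb the resulting $\|\nabla\overline{\theta}_1\|_{L_2}^2$ into $\tfrac\kappa2\|\nabla\overline{\theta}_1\|_{L_2}^2$ and the resulting $\int_{\Gamma_0}\overline{\theta}_1^2\,dS$ into $\tfrac L2\int_{\Gamma_0}\overline{\theta}_1^2\,dS$. This is precisely where $\kappa$ and $L$ must be large compared with the fixed constant $C_P$. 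The remaining right--hand terms (built from $g$, $c_g$, $T_1$ and $\overline{\theta}_0-T_0$) are dealt with by Young's inequality, again absorbing the $\|\nabla\overline{\theta}_1\|_{L_2}^2$ and $\Gamma_0$ parts. Using \eqref{2.12bb} for $\overline{\theta}_0-T_0$ and the standing assumption $\kappa\sim L$ one obtains \eqref{est_theta1_1} with a constant independent of $\kappa$ and $L$; uniqueness of the weak solution is immediate from this bound.

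\emph{Step 2: the $W^2_p$ estimate.} I would rewrite \eqref{eqn_theta1} as the Neumann problem \eqref{2.21} for $\kappa\overline{\theta}_1$ and apply Lemma \ref{lem_Neumann}. The right--hand data contain the circular quantities $(1+e_2)\partial_{x_1}\overline{\theta}_1$, contributing $\|\overline{\theta}_1\|_{W^1_p}$ to the interior part, and $L\overline{\theta}_1$ on $\Gamma$, contributing (after bounding the boundary norm $W^{1-1/p}_p(\Gamma)$ by $\|\overline{\theta}_1\|_{W^1_p}$ via the trace theorem) a term of size $L\|\overline{\theta}_1\|_{W^1_p}$; the rest of the data, built from $g$, $T_1$ and $\overline{\theta}_0-T_0$, is harmless, the last of them controlled by \eqref{2.12b}. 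The circular $W^1_p$ norm is then removed by the interpolation inequality of Lemma \ref{lem_int} in the form $\|\overline{\theta}_1\|_{W^1_p}\le\varepsilon\|\overline{\theta}_1\|_{W^2_p}+C(\varepsilon)\|\overline{\theta}_1\|_{W^1_2}$: since the left--hand side carries the factor $\kappa$ while the offending coefficient is of size $1+L\sim\kappa$, a fixed small $\varepsilon$ is enough to absorb $\varepsilon\|\overline{\theta}_1\|_{W^2_p}$ into $\tfrac\kappa2\|\overline{\theta}_1\|_{W^2_p}$. Dividing by $\kappa$ and inserting \eqref{est_theta1_1} gives \eqref{est_theta1_2} with a constant independent of $\kappa$ and $L$ thanks to $\kappa\sim L$; this strong solution coincides with the weak one by uniqueness.

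The main difficulty is the bookkeeping of the $\kappa$-- and $L$--dependence: largeness of the parameters must be spent only on dominating the fixed Poincar\'e and interpolation constants and the single bad boundary integral over $\Gamma_{in}$, so that the constants in \eqref{est_theta1_1}--\eqref{est_theta1_2} do not blow up as $\kappa,L\to\infty$; keeping $\kappa\sim L$ is what bounds every ratio $L/\kappa$ that shows up. A minor additional point is that $\overline{\theta}_0$ enters through its gradient only, not through $\overline{\theta}_0$ itself, so that the possibly large constant $T_0$ never appears and the contribution of $\overline{\theta}_0$ is truly small by \eqref{2.12b}.
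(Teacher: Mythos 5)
Your proposal is correct and follows essentially the same route as the paper: testing with $\overline{\theta}_1$, absorbing the $\Gamma_{in}$ boundary term via the Poincar\'e inequality using the largeness of $\kappa$ and $L$, and then upgrading to $W^2_p$ by rewriting \eqref{eqn_theta1} as the Neumann problem \eqref{2.21}, applying Lemma \ref{lem_Neumann} and removing the circular $W^1_p$ norm by interpolation against the $W^1_2$ bound. The bookkeeping of the $\kappa$-- and $L$--dependence under the assumption $\kappa\sim L$ is exactly the point the paper also emphasizes.
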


\subsection{System for perturbations} 
As we are interested in small perturbations of $(\bar v, \bar \rho, \bar \theta)$,
it is natural to introduce the perturbations as unknown functions.
For technical reasons it is better to get rid of inhomogeneity on the boundary.
Hence we construct $u_0 \in W^2_p$ such that $u_0 \cdot n|_{\Gamma} = d$. More precisely, we set $u_0 = \nabla \phi$,
where $\phi$ solves 
$$
\begin{array}{lcr}
\Delta \phi = \int_\Gamma d\, dS  & \mbox{in} & \Omega,\\
\frac{\partial \phi}{\partial n} = d & \mbox{on} & \Gamma.
\end{array}
$$
Next we introduce the perturbations
\begin{equation} \label{pert}
u = v - \bar v - u_0, \quad \sigma = \rho - \bar \rho, \quad \eta = \theta - \overline{ \theta} = \theta - \overline{\theta}_0 - \overline{\theta}_1.
\end{equation}
Replacing the total energy balance (\ref{main_system})$_3$ with the internal energy balance (\ref{int_en_bal})
we can rewrite (\ref{main_system}) as
\begin{eqnarray} \label{system}
\begin{array}{lcr}
\partial_{x_1}u - {\rm div}\, {\bf S}(\nabla u) +
p_1 \nabla \sigma + p_2 \nabla \eta = F(u,\sigma,\eta) & \mbox{in} & \Omega,\\
{\rm div}\, u + \sigma_{x_1} + (u+u_0) \cdot \nabla \sigma = G(u,\sigma)
& \mbox{in}& \Omega,\\
(1+ e_2) \partial_{x_1}\eta - \kappa \Delta \eta + T_0 p_2  {\rm div} \,u = H(u,\sigma,\eta) & \mbox{in} & \Omega,\\
({\bf S}(\nabla u)n)\cdot \tau_i +\alpha  u \cdot \tau_i = B_i, \quad i=1,2
&\mbox{on} & \Gamma, \\
n\cdot  u = 0 & \mbox{on} & \Gamma,\\
\sigma=\sigma_{in} & \mbox{on} & \Gamma_{in},\\
\kappa \frac{\partial \eta}{\partial n} + L \, \eta =0 & \mbox{on} & \Gamma,
\end{array}
\end{eqnarray}
where
\begin{equation}  \label{FGH}
\begin{array}{l}
F(u,\sigma,\eta) = (1+\sigma) f - p_2\nabla (\overline{\theta}_0 + \overline{\theta}_1) - \big(\partial_\rho \pi (\sigma+1, \overline{\theta}_0+\overline{\theta}_1+\eta)-p_1 \big)\nabla \sigma \\
\big(\partial_\theta \pi (\sigma+1, \overline{\theta}_0+\overline{\theta}_1+\eta)-p_2 \big)\nabla (\eta+\overline{\theta}_0 + \overline{\theta}_1) + {\rm div}\, {\bf S}(\nabla u_0) -\partial_{x_1}u_0 \\
- (u+u_0) \cdot \nabla (u+u_0)
- \sigma (\bar v + u+u_0) \cdot \nabla (u+u_0), 
\\[5pt]
G(u,\sigma) = -(\sigma + 1) \, {\rm div} \, u_0 - \sigma \, {\rm div} \,u,
\\ [5pt]
H(u,\sigma,\eta) = {\bf S}(\nabla(u+u_0)) \cdot \nabla (u+u_0) - T_0 \partial_\rho \pi(1+\sigma, \eta + \overline{\theta}_0 + \overline{\theta}_1) {\rm div}\,u_0 \\
 -(1+\sigma) (u+u_0) \cdot \nabla (\overline{\theta}_0 + \overline{\theta}_1 + \eta) +T_0 {\rm div} \,u  \big(p_2- \partial_\theta \pi (1+\sigma, \overline{\theta}_0+ \overline{\theta_1} +\eta)\big)
\\
-(\eta + (\overline{\theta}_0 -T_0) + \overline{\theta}_1) \partial_\theta \pi(1+\sigma, \overline{\theta}_0+ \overline{\theta_1} +\eta) {\rm div}\, (u+u_0) \\
 + \partial_{x_1} (\overline{\theta}_0 + \overline{\theta}_1 + \eta) \big(e_2- \partial_\theta e (1+\sigma, \overline{\theta}_0+ \overline{\theta_1} +\eta)\big),
\\[5pt]
B_i = b_i - \alpha \tau_i^{(1)} - ({\bf S}(\nabla u_0) n) \cdot \tau_i -\alpha u_0 \cdot \tau_i.
 \end{array}
\end{equation}
The above expressions result directly from substitution of (\ref{pert}) to (\ref{main_system}).
It is only worth to notice that to derive the expression for $H$ we have applied the continuity equation which yields 
\begin{equation} \label{div_id}
{\rm div} (\rho e v) = \rho v \cdot \nabla e = 
\rho v \cdot \nabla \rho \, \partial_\rho e + \rho v \cdot \nabla \theta \, \partial_\theta e.
\end{equation}
Now in (\ref{main_system})$_3$
the first term on the r.h.s. of (\ref{div_id}) can be combined  with the $\pi {\rm div}\,v$, what finally gives the above form of $H(\cdot)$.

Let us state at this early stage the following estimates 
on the quantities $F,G,H$ defined in (\ref{FGH}):
\begin{equation} \label{est_FGH}
\begin{array}{ll}
\|F(u,\sigma,\eta)\|_{L_p}  &\leq
C \big[ (\|u\|_{W^2_p} + \|\sigma\|_{W^1_p} + \|\eta\|_{W^2_p})^2  + (\|u\|_{W^2_p} + \|\sigma\|_{W^1_p})^3 + D_0(1+D_0)\big], \\
\\ \smallskip
\|G(u,\sigma)\|_{W^1_p} & \leq C \big[(\|u\|_{W^2_p} + \|\sigma\|_{W^1_p})^2 + D_0(1+D_0)\big],\\ 
\\ \smallskip
\|H(u,\sigma,\eta)\|_{L_p} 
&\leq C \big[ (\|u\|_{W^2_p} + \|\sigma\|_{W^1_p} + \|\eta\|_{W^2_p})^2  \\
&+ (\|u\|_{W^2_p} + \|\sigma\|_{W^1_p}+ \|\eta\|_{W^2_p})^3 + D_0(1+D_0)\big], \\
\end{array}
\end{equation}
where $D_0$ is defined in (\ref{D0}). 
The bounds (\ref{est_FGH}) are straightforward,
it is enough to observe that $F,G$ and $H$ are composed of terms which are either quadratic or cubic
with respect to the functions $u,\sigma,\eta$ or linear with respect to them multiplied by something which can be bounded by $D_0$.
Note that in the estimate of terms contained differences of the derivatives of $\pi$ or $e$ we use the fact fact that these functions are twice continuously differentiable and hence we may use the Taylor expansion theorem.
 
We sketch briefly the idea of the proof of Theorem 1, and hence the structure of the remainder
of the paper. In Subsection 2.4 we explain how to deal with the linearization of the continuity equation, i.e. equation \eqref{system}$_{2}$.
To show existence of solutions we can apply a method of successive approximations and look for the solution
as a limit of a sequence 
\begin{eqnarray} \label{system_seq}
\begin{array}{lcr}
\partial_{x_1} u^{n+1} - {\bf S}(\nabla u^{n+1}) +
p_1 \nabla \sigma^{n+1} + p_2 \nabla \eta^{n+1} = F(u^n,\sigma^n,\eta^n) & \mbox{in} & \Omega,\\
{\rm div} \, u^{n+1} + \sigma^{n+1}_{x_1} + (u^n+u_0) \cdot \nabla \sigma = G(u^n,\sigma^n)
& \mbox{in}& \Omega,\\
(1+ e_2) \partial_{x_1}\eta^{n+1} - \kappa \Delta \eta^{n+1} + T_0 p_2 {\rm div} u^{n+1} = H(u^n,\sigma^n,\eta^n) & \mbox{in} & \Omega,\\
 ({\bf S}(\nabla u^{n+1})n) \cdot \tau_i +\alpha u^{n+1} \cdot \tau_i = B_i, \quad i=1,2
&\mbox{on} & \Gamma, \\
n\cdot  u^{n+1} = 0 & \mbox{on} & \Gamma,\\
\sigma^{n+1}=\sigma_{in} & \mbox{on} & \Gamma_{in},\\
\kappa \frac{\partial \eta^{n+1}}{\partial n} + L \, \eta^{n+1} =0 & \mbox{on} & \Gamma.
\end{array}
\end{eqnarray}
In Section 3 we deal with the linear system (\ref{system_lin}) showing the a priori estimates. 
The first step is the energy estimate which is derived in quite a standard
way. The energy estimate is then applied to show the maximal regularity $L^p$ estimates, 
hence estimates in the space where we look for the solution (Lemma \ref{lem_est_lin}). 
It can be considered as the main result of Section 3.
In the end of Section 3 we solve the linear system, hence we show that the sequence (\ref{system_seq})
is well defined.    
In Section 4 we deal with the sequence of approximations. Using the estimates for the linear system
combined with (\ref{FGH}) we show it is bounded in $W^2_p \times W^1_p \times W^2_p$ and has a contraction
property in a weaker space $W^1_2 \times L_{\infty}(L_2) \times W^1_2$. 
An analogous estimate gives uniqueness in the class of solutions satisfying (\ref{est_main}). 
In other words we apply a generalization of the Banach fixed point theorem
to show the convergence of the sequence of approximations to the solution of (\ref{main_system}).
    
\subsection{Steady transport equation}
In this section we show the well posedness of the operator $S:L_2 \to L_{\infty}(L_2)$ defined as
\begin{equation} \label{def_S}
w = S(h) \iff \left\{
\begin{array}{lcr}
\partial_{x_1}w + U \cdot \nabla w = h & \textrm{in} & {\cal D'}(\Omega), \\
w = w_{in} & \textrm{on} & \Gamma_{in},
\end{array} \right.
\end{equation}
where $w$ is a weak solution to \eqref{def_S} in the sense as introduced in \eqref{weak2}.
The solvability of the above problem, to which we refer as a steady transport equation,
will be needed to show the energy estimate for the density in Section 3 and to solve the continuity equation
in Section 4. The result is given in the following
\begin{lem} \label{lem_S}
Let $h \in L_2(\Omega)$ and $w_{in} \in L_2(\Gamma_{in})$.
Let $U \in W^2_p(\Omega)$ be such that $\|U\|_{W^2_p}$ is small enough and $U \cdot n|_{\Gamma_0} = 0$.
Then there exists a unique solution $w=S(h) \in L_\infty(L_2)$ and
\begin{equation} \label{est_S}
\|S(h)\|_{L_{\infty}(L_2)} \leq C \, \big[\|w_{in}\|_{L_2(\Gamma_{in})} + \|h\|_{L_2(\Omega)}\big].
\end{equation}  
\end{lem}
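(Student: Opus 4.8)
The plan is to construct the solution via the method of characteristics combined with a regularization/approximation argument, and to obtain the estimate \eqref{est_S} by an energy method applied to the transport equation. First I would introduce a mollified transport field $U_\varepsilon$ (or work with a smooth approximation of $h$ and $w_{in}$) so that the characteristic ODE $\dot X(t,x) = (1,U_\varepsilon^2,U_\varepsilon^3)(X)$ has a well-defined flow. The crucial geometric observation, which uses $U\cdot n|_{\Gamma_0}=0$ together with the fact that the first component of the transport field is identically $1$, is that every characteristic curve enters the domain through $\Gamma_{in}=\Omega_0\times\{0\}$, travels monotonically in the $x_1$-direction, stays inside $\Omega$ (it cannot cross the lateral wall $\Gamma_0$ because the field is tangent there, provided $\|U\|_{W^2_p}$ — hence $\|U\|_{L_\infty}$ by the Sobolev imbedding, $p>3$ — is small enough that the transversal component of the flow does not overwhelm the unit drift in $x_1$), and exits through $\Gamma_{out}=\Omega_0\times\{l\}$ in finite "time" $x_1\in[0,l]$. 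Along each characteristic the equation reduces to $\frac{d}{dx_1} w = h$, so $w$ is obtained by integrating $h$ from the inflow datum $w_{in}$.

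Next I would derive the a priori estimate, which is really the heart of the matter. Multiplying \eqref{def_S}$_1$ by $w$ and integrating over $\Omega_0\times(0,s)$ for $s\in(0,l)$, one gets
\begin{equation}
\frac12\int_{\Omega_0\times\{s\}} w^2\,dx' - \frac12\int_{\Gamma_{in}} w_{in}^2\,dx' + \frac12\int_{\partial\Omega_0\times(0,s)} (U\cdot n)\,w^2\,dS + \frac12\int_{\Omega_0\times(0,s)} ({\rm div}\,U)\,w^2\,dx = \int_{\Omega_0\times(0,s)} h\,w\,dx,
\end{equation}
where the lateral boundary term vanishes because $U\cdot n|_{\Gamma_0}=0$. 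The term with ${\rm div}\,U$ is controlled by $\|{\rm div}\,U\|_{L_\infty}\le C\|U\|_{W^2_p}$, which is small, and the right-hand side is handled by Young's inequality, $\int h w \le \frac12\|h\|_{L_2(\Omega)}^2 + \frac12\int_{\Omega_0\times(0,s)} w^2$. Denoting $y(s)=\int_{\Omega_0\times\{s\}} w^2\,dx'$, this yields a Grönwall-type differential inequality $y(s)\le \|w_{in}\|_{L_2(\Gamma_{in})}^2 + \|h\|_{L_2(\Omega)}^2 + C\int_0^s y(\tau)\,d\tau$, whence $\sup_{s\in(0,l)} y(s)\le C(\|w_{in}\|_{L_2(\Gamma_{in})}^2+\|h\|_{L_2(\Omega)}^2)$ with $C$ depending only on $l$ and the (controlled) size of $\|U\|_{W^2_p}$. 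This is exactly \eqref{est_S}. Uniqueness follows from the same estimate applied to the difference of two solutions with $h=0$, $w_{in}=0$: then $w\equiv 0$.

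Finally I would pass to the limit in the regularization: the uniform bound \eqref{est_S} gives weak-$*$ compactness of the approximate solutions $w_\varepsilon$ in $L_\infty((0,l),L_2(\Omega_0))$, the linearity of the equation allows passage to the limit in the weak formulation \eqref{weak2}, and lower semicontinuity of the norm preserves \eqref{est_S} for the limit; one also checks that the limit attains the inflow datum in the appropriate (weak) sense, which is where the monotone $x_1$-transport structure is used again. I expect the main obstacle to be the rigorous justification that the characteristics do not hit the lateral wall $\Gamma_0$ and that the inflow trace $w=w_{in}$ on $\Gamma_{in}$ is attained in the sense of \eqref{weak2} — i.e. making the "integration along characteristics" argument precise at the level of weak solutions with merely $L_2$ data, and controlling the corner regions where $\Gamma_{in}$ meets $\Gamma_0$. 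The smallness of $\|U\|_{W^2_p}$ is what makes the transversal drift harmless and keeps all constants in the energy estimate independent of the approximation parameter.
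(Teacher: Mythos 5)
Your construction is the same in spirit as the paper's: both rest on integrating along the characteristics of the field $\tilde u=(1+U^{1},U^{2},U^{3})$ and on the geometric observation that $U\cdot n|_{\Gamma_0}=0$ forces every characteristic to run from $\Gamma_{in}$ to $\Gamma_{out}$ without touching the lateral wall (the paper packages this as the change of variables $x=\psi(z)$ of Lemma~\ref{lem_change}, under which the domain remains a cylinder with flat wall). Where you genuinely diverge is in the derivation of \eqref{est_S}: the paper reads the bound directly off the explicit formula \eqref{S} by estimating $\|S(h)\|_{L_2(\Omega_{x_1})}$ on each cross-section with Cauchy--Schwarz, then extends $S$ to $L_2$ by density; you instead test the equation with $w$, integrate over $\Omega_0\times(0,s)$ and run Gr\"onwall in $x_1$. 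Both routes work; the paper's is slightly more economical because the estimate and the construction come from the same formula, while yours has the advantage of not needing the explicit solution representation at the estimate stage. Two small inaccuracies in your version: the characteristic field is $(1+U^{1},U^{2},U^{3})$, not $(1,U^{2},U^{3})$, and consequently the flux terms on $\Gamma_{in}$ and on the cut $\Omega_0\times\{s\}$ carry a factor $1+U^{1}$ (harmless, since $\|U^1\|_{L_\infty}\le C\|U\|_{W^2_p}$ is small and $1+U^1\ge 1/2$, but it should appear).

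The one place where your argument is genuinely thinner than the paper's is uniqueness. Your energy identity is legitimate for the smooth (mollified) approximations, and passing to the limit gives existence together with \eqref{est_S} for the \emph{constructed} solution. But uniqueness requires the energy identity for an \emph{arbitrary} weak solution $w\in L_\infty(L_2)$ of \eqref{weak2} with zero data, and for such $w$ the chain rule $w\,(\partial_{x_1}w+U\cdot\nabla w)=\tfrac12\,\tilde u\cdot\nabla(w^2)$ is not free: it is exactly the renormalization property of the transport equation, which must be justified by a commutator (Friedrichs mollification) lemma. The paper sidesteps this by rewriting the equation in the form \eqref{def_S_1a}, treating $x_1$ as time, and invoking the DiPerna--Lions theory \cite{DPL}; since $U\in W^2_p$ with $p>3$ gives $\nabla U\in L_\infty$, this applies. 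You should either cite that theory or supply the commutator estimate; as written, ``the same estimate applied to the difference'' presupposes the very regularity of $w$ that a general weak solution lacks.
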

The proof follows \cite{TP1}. For the sake of completeness we recall here the most important steps,
referring to \cite{TP1} for the missing details. The idea is to get rid of the term $U \cdot \nabla \sigma$ introducing
a change of variables
$x = \psi(z)$ satisfying the identity
\begin{equation} \label{change_id}
\partial_{z_1} = \partial_{x_1} + U \cdot \nabla_x.
\end{equation}
We construct the mapping $\psi$ in the following
\begin{lem}  \label{lem_change}
Let $\|U\|_{W^2_p}$ be small enough. Then there exists a set ${\cal O} \subset \mathbb{R}^3$
and a diffeomorphism $x=\psi(z)$ defined on ${\cal O}$ such that $\Omega = \psi({\cal O})$ and (\ref{change_id}) holds.
Moreover, if $z_n \to z$ and $\psi(z_n) \to \Gamma_0$ then $n^1(z) = 0$, where $n$ is the outward normal
to ${\cal O}$.

\end{lem}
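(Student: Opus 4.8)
The plan is to construct $\psi$ by solving an ODE system that "straightens" the vector field $e_1 + U$. Concretely, for fixed $(z_2,z_3)$ in a neighbourhood of $\Omega_0$, I would define $z_1 \mapsto \psi(z_1,z_2,z_3)$ as the solution of the characteristic system
\begin{equation} \label{char_ode}
\frac{d}{dz_1}\psi(z_1,z_2,z_3) = e_1 + U(\psi(z_1,z_2,z_3)), \qquad \psi(0,z_2,z_3) = (0,z_2,z_3),
\end{equation}
i.e. $\psi$ maps the hyperplane $\{z_1=0\}$ onto $\Gamma_{in}$ and then flows along the perturbed axial direction. Identity \eqref{change_id} is then immediate from the chain rule: for any $f$, $\partial_{z_1}(f\circ\psi) = \nabla_x f\cdot \partial_{z_1}\psi = \big(\partial_{x_1}f + U\cdot\nabla_x f\big)\circ\psi$. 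The set $\mathcal{O}$ is defined as $\psi^{-1}(\Omega)$, i.e. the collection of $z$ whose characteristic, started from $\Gamma_{in}$, stays in $\Omega$ up to "time" $z_1$.

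The key points to verify, in order, are: (i) \emph{$\psi$ is well defined and smooth enough on a set $\mathcal{O}$ with $\Omega=\psi(\mathcal{O})$}. Since $U\in W^2_p(\Omega)\hookrightarrow C^{1}(\overline\Omega)$ for $p>3$, the right-hand side of \eqref{char_ode} is $C^1$, so local existence, uniqueness and $C^1$-dependence on initial data hold; smallness of $\|U\|_{W^2_p}$ guarantees the characteristics do not deviate far from straight lines $z_1\mapsto(z_1,z_2,z_3)$, hence they traverse the whole length $[0,l]$ before possibly leaving $\Omega$ laterally, and every point of $\Omega$ lies on exactly one such characteristic — this gives surjectivity onto $\Omega$. (ii) \emph{$\psi$ is a diffeomorphism}: compute $\nabla_z\psi$; the first column is $e_1+U\circ\psi$, close to $e_1$, and the columns $\partial_{z_2}\psi,\partial_{z_3}\psi$ solve the variational equation with initial data $e_2,e_3$, hence stay close to $e_2,e_3$; therefore $\det\nabla_z\psi$ is close to $1$, so $\psi$ is a local diffeomorphism, and combined with the injectivity from (i) it is a global one. (iii) \emph{The boundary behaviour on $\Gamma_0$}: this is the assertion that if $z_n\to z$ with $\psi(z_n)\to\Gamma_0$ then $n^1(z)=0$, i.e. the lateral boundary of $\mathcal{O}$ is "vertical" in the $z_1$ direction. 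This follows because $\Gamma_0=\partial\Omega_0\times[0,l]$ and $U\cdot n|_{\Gamma_0}=0$: the characteristic field $e_1+U$ is tangent to $\Gamma_0$ (its normal component there is $n\cdot e_1 + n\cdot U = 0+0$, using that $n$ on $\Gamma_0$ has no $x_1$-component), so a characteristic touching $\Gamma_0$ stays on it; pulling this back through $\psi$, the preimage $\psi^{-1}(\Gamma_0)$ is a union of $z_1$-lines, whence its normal $n(z)$ is orthogonal to $e_1$, i.e. $n^1(z)=0$.

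The main obstacle is step (iii) together with the careful bookkeeping near the edges where $\Gamma_0$ meets $\Gamma_{in}$ and $\Gamma_{out}$: one must ensure that the flow of $e_1+U$ does not map interior points of $\Omega$ onto the lateral wall prematurely nor create characteristics that graze the corners, and that the tangency $U\cdot n|_{\Gamma_0}=0$ is genuinely enough to keep $\mathcal{O}$ a Lipschitz (indeed cylinder-like) domain on which the subsequent change of variables in the transport equation is legitimate. The regularity threshold $p>3$ is what makes $U$ (and hence the vector field) $C^1$, which is exactly what the ODE argument needs; with only $W^2_p$, $p\le 3$, the characteristic flow would not be classically well posed and a different, more technical argument (as in \cite{TP1}) would be required. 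All the estimates invoked are perturbative consequences of $\|U\|_{W^2_p}$ small, so I would not grind through them here and instead refer to \cite{TP1} for the quantitative details.
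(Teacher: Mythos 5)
Your construction is exactly the paper's: the same characteristic ODE $\partial_{z_1}\psi = e_1 + U(\psi)$ with initial data on $\Gamma_{in}$, the same perturbative argument ($\psi$ close to the identity in $W^1_\infty$) for the diffeomorphism property, and the same tangency observation $D\psi(z)[1,0,0] = [1+U^1,U^2,U^3]$ combined with $U\cdot n|_{\Gamma_0}=0$ to conclude that $\mathcal{O}_0$ is a union of $z_1$-lines and hence $n^1(z)=0$. The proposal is correct and follows essentially the same route as the paper's sketch, which likewise defers the quantitative details to \cite{TP1}.
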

Before we sketch the proof let us make one remark.
The last condition states that the first component of the normal to $\psi^{-1}(\Gamma_0)$ vanishes,
but since $\psi$ is defined only on ${\cal O}$ we formulate this condition using the limits.
It means simply that the image ${\cal O} = \psi^{-1}(\Omega)$ is also a cylinder with a flat wall. 

\noindent
\emph{Sketch of the proof of Lemma \ref{lem_change}.} 
The identity (\ref{change_id}) means that $\psi$ must satisfy
\begin{equation} \label{psi_z1}
\frac{\partial \psi^{1}}{\partial z_1} = 1 + U^1 (\psi), \quad
\frac{\partial \psi^{2}}{\partial z_1} = U^2 (\psi), \quad
\frac{\partial \psi^{3}}{\partial z_1} = U^3 (\psi).
\end{equation}
A natural condition is that $\psi(\Gamma_{in}) = \Gamma_{in}$.
Thus we can search for $\psi(z_1,z_2,z_3) = \psi_{z_2,z_3}(z_1)$,
where for all $(z_2,z_3)$ such that $(z_2,z_3,0) \in \Gamma_{in}$
the function $\psi_{z_2,z_3}(\cdot)$ is a solution to a system of ODE:
\begin{equation}  \label{ode}
\left\{ \begin{array}{l}
\partial_s \psi_{z_2,z_3}^1 = 1 + U^1(\psi_{z_2,z_3}), \quad
\partial_s \psi_{z_2,z_3}^2 = U^2(\psi_{z_2,z_3}), \quad
\partial_s \psi_{z_2,z_3}^3 = U^3(\psi_{z_2,z_3}),\\
\psi_{z_2,z_3}(0) = (0,z_2,z_3).
\end{array} \right.
\end{equation}
Using the regularity and smallness of $U$ we show existence for (\ref{ode}), 
hence we have $\psi$ defined on some ${\cal O}$ such that $\Omega = \psi({\cal O})$. 
The smallness of $U$ implies also that 
$\psi(z)=z+\psi_{\epsilon}(z)$, where $\|\psi_{\epsilon}\|_{W^1_\infty}$ is small and we 
conclude that $\psi$ is a diffeomorphism.
Let us denote $\phi = \psi^{-1}$. 
Now it is natural to define the
subsets of $\partial {\cal O}$ as $\partial {\cal O} = {\cal O}_{in} \cup {\cal O}_{out} \cup {\cal O}_0$ 
where ${\cal O}_{in} = \Gamma_{in}$,
${\cal O}_{out} = \{z: \; z = {\rm lim} \, \phi(x_n), \; x_n \to \Gamma_{out}\}$ and
${\cal O}_{0} = \{z: \; z = {\rm lim} \, \phi(x_n), \; x_n \to \Gamma_{0}\}$.
To show that
$n^1(z)=0$ for $z \in {\cal O}_0$ it is enough to observe that
$$
D \psi(z) ([1,0,0]) = [1+U^1(x),U^2(x),U^3(x)],
$$
where $x=\psi(z)$. But for $x \in \Gamma_0$ the vector on the r.h.s is tangent to $\Gamma_0$
since $U \cdot n|_{\Gamma_0} =0$.
We can conclude that on ${\cal O}_0$ the image in $\psi$ of a straight line
$\{(s,z_2,z_3): \, s \in (0,b)\}$ is a curve tangent to $\Gamma_0$, and thus
${\cal O}_0$ is a sum of such lines and so we have $n^1(z)=0$. The proof
of lemma \ref{lem_change} is completed. 
\hfill
$\square$

\noindent \emph{Proof of Lemma \ref{lem_S}.}
First we define $S(h)$ for a continuous function $h$ as
\begin{equation} \label{S}
S(h)(x) = w_{in}(0,\phi_2(x),\phi_3(x))
+ \int_{0}^{\phi_1(x)} h (\psi(s,\phi_2(x),\phi_3(x))) \,ds.
\end{equation}
The condition $n^1=0$ on $\phi(\Gamma_0)$ guarantees that a straight line
$(s,z_1,z_2): s \in (0,b)$ has a picture in $\Omega$ and thus we integrate along a curve contained in $\Omega$.
It means that $S$ is well defined for continuous functions defined on $\Omega$ and
the construction of $\psi$ clearly ensures that $S$ satisfies (\ref{def_S}). Next we have to extend $S$
on $L_2(\Omega)$. To this end we show the estimate (\ref{est_S}) for continuous $h$.
Let $\Omega_{x_1}$ denote an $x_1$-cut of $\Omega$ and let $\bar x := (x_2,x_3)$.
Then by (\ref{S}) we have
$$\begin{array}{c}
\|S(h)\|_{L_2(\Omega_{x_1})}^2 =
\int_{\Omega_{x_1}} \Big[ w_{in}(0,\phi_2(x),\phi_3(x)) + \int_0^{\phi_1(x)} h(\psi(s,\phi_2(x),\phi_3(x))) \,ds \Big]^2 \, d \bar x
\\
\leq 2 \|w_{in}\|_{L_2(\Gamma_{in})}^2 + C \int_{\Omega_{x_1}} \int_0^{\phi_1(x)} h^2(\psi(s,\phi_2(x),\phi_3(x))) \,ds \,d\bar x
\leq C \, \big[ \|w_{in}\|_{L_2(\Gamma_{in})}^2 + \|h\|_{L_2(\Omega)}^2 \big].
\end{array}
$$
The above estimate holds for every $x_1 \in (0,L)$ what implies (\ref{est_S}).
Now we can define $S(v)$ for $v \in L_2(\Omega)$ using a standard density argument,
the details are given in (\cite{TP1}).
To show the uniqueness of the solution we can rewrite the r.h.s of (\ref{def_S}) as
\begin{equation} \label{def_S_1a}
\left\{ \begin{array}{lcr}
\partial_{x_1} w + \frac{U^{2}}{1+U^{1}} \partial_{x_2} w + \frac{U^{3}}{1+U^{1}} \partial_{x_3} w = \frac{h}{1+U^{1}} & \textrm{in} & \Omega, \\
w = w_{in} & \textrm{on} & \Gamma_{in},
\end{array} \right.
\end{equation}
where the equation is to be understood in the sense of distribution, and, treating $x_1$ as a "time" variable, adapt Di Perna - Lions theory of transport equation (\cite{DPL})
that implies the uniqueness of solution to (\ref{def_S_1a}) in the class $L_{\infty}(L_2)$.
The proof is thus complete. \hfill $\square$
	
\section{Linearization and a priori bounds}
In this section we deal with the linear system 
 
\begin{equation} \label{system_lin}
\begin{array}{lcr}
\partial_{x_1} u -{\rm div}\, {\bf S}(\nabla u) +
p_1 \nabla \sigma + p_2 \nabla \eta =  F & \mbox{in} & \Omega,\\
{\rm div} \, u + \partial_{x_1}\sigma + U \cdot \nabla \sigma = G
& \mbox{in}& \Omega,\\
r_0\partial_{x_1}\eta + r_1 {\rm div}\,u  -\kappa \Delta \eta = H, & \mbox{in} & \Omega, \\
({\bf S}(\nabla u)n)\cdot  \tau_i +\alpha u \cdot \tau_i = B_i, \quad i=1,2
&\mbox{on} & \Gamma, \\
n\cdot  u = 0 & \mbox{on} & \Gamma,\\
\sigma = \sigma_{in} & \mbox{on} & \Gamma_{in},\\
\kappa \frac{\partial \eta}{\partial n} + L \eta =0 & \mbox{on} & \Gamma
\end{array}
\end{equation}
with given functions $(F,G,H,B_i,\sigma_{in}, U) \in (L_p \times W^1_p \times L_p \times W^{1-1/p}_p(\Gamma) \times W^1_p(\Gamma_{in}) \times W^2_p)$
such that 
\begin{equation} \label{3.1}
U \cdot n = 0 \mbox{ on } \Gamma_0, \quad U \cdot n >0 \mbox{ on } \Gamma_{out}, \quad U\cdot n <0 \mbox{ on } \Gamma_{in}.
\end{equation}
Note that this system actually represents \eqref{system}, where we only redefined a few constants.
The main difficulty lies in deriving appropriate estimates for (\ref{system_lin}).
We start with the energy estimates, then applying the properties of slip boundary conditions,
Helmholtz decomposition of the velocity and classical elliptic theory we derive the $L_p$ estimates.

Apart from solving the system (\ref{system_lin}), the estimates derived in this section are used later,
combined with (\ref{est_FGH}), to show the convergence of the sequence of approximations (\ref{system_seq}).

\subsection{Energy estimates}

\begin{lem} \label{l 8}
Let $(u,\sigma, \eta)$ solve system (\ref{system_lin}) with 
$(F,G,H,B_i,\sigma_{in},U) \in V^* \times L_2 \times L_2 \times L_2(\Gamma) \times L_2(\Gamma_{in}) \times W^2_p$ 
with $\|U\|_{W^2_p}$ small enough, $U \cdot n$ on $\Gamma$ fulfilling conditions \eqref{3.1} above, 
where $V^*$ is the dual space to
\begin{equation} \label{def_V}
V = \{v \in W^1_2(\Omega): v \cdot n|_{\Gamma} = 0 \}.
\end{equation}
Assume further that $\kappa,L$ and $\alpha$ fulfill the assumptions of Theorem \ref{main_thm}.
Then the following estimate holds: 
\begin{equation} \label{ene}
\begin{array}{c}
\|u\|_{W^1_2} + \|\sigma\|_{L_{\infty}(L_2)} + \|\eta\|_{W^1_2}  \\
\leq C \big[ \|F\|_{V^*} + \|G\|_{L_2} + \|H\|_{L_2} + \|B_i\|_{L_2(\Gamma)} + \|\sigma_{in}\|_{L_2(\Gamma_{in})}\big].
\end{array}
\end{equation}
\end{lem}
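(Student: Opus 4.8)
The plan is to derive the energy estimate \eqref{ene} by testing the three equations of \eqref{system_lin} with suitable multipliers and combining the resulting identities, exploiting heavily the largeness of $\kappa$, $L$ on $\Gamma_0$ and $\alpha$ on $\Gamma_{in}$ to absorb the troublesome boundary terms. First I would test the momentum equation \eqref{system_lin}$_1$ with $u$. The viscous term produces, after integration by parts and using the boundary conditions \eqref{system_lin}$_{4,5}$, the quadratic form $\int_\Omega {\bf S}(\nabla u):\nabla u\,dx + \int_\Gamma \alpha (u\cdot\tau)^2\,dS$, which by the Korn inequality (Lemma \ref{lem_Korn}) controls $\|u\|_{W^1_2}^2$ (here the hypothesis that either $\alpha>0$ or $\Omega$ is not axisymmetric is used). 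The convective term $\int_\Omega \partial_{x_1}u\cdot u\,dx = \frac12\int_\Gamma n^{(1)}|u|^2\,dS$ is a boundary term that needs care: on $\Gamma_{out}$ it has a favourable sign, on $\Gamma_0$ it vanishes since $n^{(1)}=0$, and on $\Gamma_{in}$ it is negative but can be absorbed using largeness of $\alpha$ on $\Gamma_{in}$ together with \eqref{Poin1}. The pressure terms yield $-p_1\int_\Omega\sigma\,{\rm div}\,u\,dx - p_2\int_\Omega\eta\,{\rm div}\,u\,dx$; the first is handled via the continuity equation and the second is coupled with the temperature estimate.

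Next I would test the continuity equation \eqref{system_lin}$_2$ with $\sigma$; integrating $\int_\Omega(\partial_{x_1}\sigma + U\cdot\nabla\sigma)\sigma\,dx$ by parts gives $\frac12\int_\Gamma (n^{(1)}+U\cdot n)\sigma^2\,dS - \frac12\int_\Omega({\rm div}\,U)\sigma^2\,dx$, and then $\int_\Omega \sigma\,{\rm div}\,u\,dx$ appears, which cancels against the corresponding term from the momentum estimate (up to the factor $p_1$). Since this $L_2(\Omega)$ control of $\sigma$ degenerates near $\Gamma_{in}$ (the boundary integral has the wrong sign there), I would instead invoke Lemma \ref{lem_S}: with $U$ small in $W^2_p$, the solution operator $S$ gives $\|\sigma\|_{L_\infty(L_2)} \le C(\|\sigma_{in}\|_{L_2(\Gamma_{in})} + \|G - {\rm div}\,u\|_{L_2})$, so $\sigma$ is controlled by the data plus $\|u\|_{W^1_2}$, and this can be fed back after the velocity and temperature are estimated. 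For the temperature, I would test \eqref{system_lin}$_3$ with $\eta$: the term $-\kappa\int_\Omega\Delta\eta\,\eta\,dx = \kappa\int_\Omega|\nabla\eta|^2\,dx + \int_\Gamma L\eta^2\,dS$ (using the Robin condition \eqref{system_lin}$_7$) is coercive, $r_0\int_\Omega\partial_{x_1}\eta\,\eta\,dx = \frac{r_0}{2}\int_\Gamma n^{(1)}\eta^2\,dS$ is again favourable on $\Gamma_{out}$, zero on $\Gamma_0$, and on $\Gamma_{in}$ controllable by largeness of $\kappa$ and $L$ via \eqref{Poin1} (since $L=0$ on $\Gamma_{in}$ one must push the $\Gamma_{in}$ contribution into $\int_\Gamma L\eta^2$ on $\Gamma_0$ plus $\kappa\int|\nabla\eta|^2$), and the coupling term $r_1\int_\Omega {\rm div}\,u\,\eta\,dx$ is the one that must be matched against the $p_2\int_\Omega\eta\,{\rm div}\,u\,dx$ term in the momentum estimate.

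The decisive step is then to form a suitable linear combination $\mathcal{I}_1 + p_1\,\mathcal{I}_2 + \beta\,\mathcal{I}_3$ of the three tested identities (with $\beta$ chosen so that the two $\int_\Omega\eta\,{\rm div}\,u$ contributions and the two $\int_\Omega\sigma\,{\rm div}\,u$ contributions cancel), so that the left-hand side controls $\|u\|_{W^1_2}^2 + \|\eta\|_{W^1_2}^2$ plus a bit of boundary mass, and the right-hand side contains only the data ($\|F\|_{V^*}$ paired with $u\in V$, $\|G\|_{L_2}$, $\|H\|_{L_2}$, $\|B_i\|_{L_2(\Gamma)}$), the $\sigma$-contributions controlled through Lemma \ref{lem_S}, and lower-order terms absorbable by Young's and Poincaré's inequalities together with the smallness of $\|U\|_{W^2_p}$. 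I expect the main obstacle to be the bookkeeping of the boundary terms on $\Gamma_{in}$: every one of the transport-type terms $\partial_{x_1}(\cdot)$ produces a negative contribution there, and these must all be simultaneously absorbed by the coercive terms — the friction $\alpha\int_{\Gamma_{in}}(u\cdot\tau)^2$ together with Korn for the velocity, and $\kappa\int_\Omega|\nabla\eta|^2 + L\int_{\Gamma_0}\eta^2$ together with the Poincaré inequality \eqref{Poin1} (relating $\|\eta\|_{L_2(\Gamma_{in})}$ to $\|\eta\|_{L_2(\Gamma_0)}$ and $\|\nabla\eta\|_{L_2}$) for the temperature — which is exactly why the theorem requires $\alpha$, $\kappa$, $L$ large. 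A secondary technical point is that $F$ is only in $V^*$, so the momentum equation must be tested strictly within $V$; since $u\in V$ by \eqref{system_lin}$_5$ this is fine, but one must keep the pressure terms on the correct side of the duality pairing.
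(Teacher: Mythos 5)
Your proposal follows essentially the same route as the paper: test the momentum equation with $u$ (Korn plus the friction term on $\Gamma$, with $\alpha$ large on $\Gamma_{in}$ absorbing the negative inflow contribution of $\int_\Omega \partial_{x_1}u\cdot u\,dx$), use the continuity equation to convert $p_1\int_\Omega\sigma\,\mathrm{div}\,u\,dx$ into controllable boundary and $\mathrm{div}\,U$ terms, test the temperature equation with $\eta$ and cancel the $\int_\Omega\eta\,\mathrm{div}\,u\,dx$ cross terms in the linear combination, absorb the $\Gamma_{in}$ temperature boundary term via the Poincar\'e inequality \eqref{Poin1} and the largeness of $\kappa$ and $L$, and close the estimate for $\|\sigma\|_{L_\infty(L_2)}$ via the transport operator of Lemma \ref{lem_S}. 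This is exactly the paper's argument, so the proposal is correct.
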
  

\noindent
\begin{proof}
We multiply (\ref{system_lin})$_1$ by $u$ and integrate
applying the following identity:
\begin{equation} \label{basic_id}
\begin{array}{c}
\int_{\Omega} -{\rm div}\,{\bf S}(u) \, v \,dx
=\int_{\Omega} {\bf S}(u) : \nabla v   \,dx
- \int_{\Gamma} ({\bf S}(u) n) \cdot   v \,dS.
\end{array}
\end{equation}
With application of the Korn inequality and the boundary condition (\ref{system_lin})$_4$
we get
\begin{equation} \label{ene1}
\begin{array}{c}
C \|u\|_{W_1^2}^2 - \int_{\Omega} \eta {\rm div}\,u\, dx  
\leq \int_{\Omega} F \cdot u \,dx + \int_{\Omega} G\sigma \,dx \\
+ \frac{p_1}{2} \int_{\Omega} \sigma^2 {\rm div} \, U \,dx
+ \frac{p_1}{2} \int_{\Gamma_{in}} \sigma_{in}^2 (1- U\cdot n)\, dS + \sum_{i=1}^2\int_\Gamma B_i u \cdot \tau_i \, dS. 
\end{array}
 \end{equation}
Next we divide (\ref{system_lin})$_3$ by $r_1$ (recall it is constant), multiply by $\eta$ and integrate. 
It yields
\begin{displaymath}
\begin{array}{c}
\frac{\kappa}{r_1} \int_{\Omega} |\nabla \eta|^2 \,dx + \frac{L}{r_1} \int_{\Gamma_0} \eta^2 \,dS\\
- \frac{r_0}{2r_1} \int_{\Gamma_{in}} \eta^2 \,dS + \frac{r_0}{2r_1} \int_{\Gamma_{out}} \eta^2 \,dS
+ \int_{\Omega} \eta {\rm div}\,u \,dx = \int_{\Omega} H\eta \,dx,
\end{array}
\end{displaymath} 
hence
\begin{equation} \label{ene2}
\frac{\kappa}{r_1} \int_{\Omega} |\nabla \eta|^2 \,dx + \frac{L}{r_1} \int_{\Gamma_0} \eta^2 \,dS
+ \int_{\Omega} \eta {\rm div}\,u \,dx 
\leq \int_{\Omega} H\eta \,dx + \frac{r_0}{2r_1} \int_{\Gamma_{in}} \eta^2 \,dS.
\end{equation}
To get rid of the boundary term on the r.h.s we can apply the Poincar\'e inequality (\ref{Poin1})
and rewrite (\ref{ene2}) as
\begin{equation}
\frac{1}{r_1}\Big(\kappa-\frac{C_P}{2}\Big) \int_{\Omega} |\nabla \eta|^2 \,dx +
\frac{1}{r_1}\Big(L-\frac{C_P}{2}\Big) \int_{\Gamma_0} \eta^2 \,dS
+ \int_{\Omega} \eta {\rm div}\,u \,dx 
\leq  \int_{\Omega} H\eta \,dx,
\end{equation}
where $C_P$ is the constant from (\ref{Poin1}). 
For $\kappa$ large enough and $L$ large enough on $\Gamma_0$ the first two terms on the l.h.s. will be positive. 
Now we can combine (\ref{ene1}) and (\ref{ene2}) obtaining
\begin{displaymath} 
\begin{array}{c}
C(\Omega,\kappa,L) \big[ \|u\|_{W^1_2}^2 + \int_{\Omega} |\nabla \eta|^2 \,dx + \|\eta\|^2_{L_2(\Gamma_0)} \big] 
\leq 
\int_{\Omega} H\eta \,dx + \int_{\Omega} F \cdot u \,dx \\ + \int_{\Omega} G\sigma \,dx
+ \frac{1}{2} \int_{\Omega} \sigma^2 {\rm div} \, U \,dx
+ \frac{1}{2} \int_{\Gamma_{in}} \sigma_{in}^2 (1-U\cdot n) \,dS + \sum_{i=1}^2 \int_\Gamma B_i u \cdot \tau_i \, dS. 
\end{array}
\end{displaymath}
Using (\ref{Poin2}), the imbedding $W^1_p \hookrightarrow L_\infty$ and H\"older's inequality we derive
\begin{equation} \label{ene3}
\begin{array}{c}
\|u\|_{W^1_2}^2 + \|\eta\|_{W^1_2}^2 
\leq \|F\|_{V^*} \|u\|_{W^1_2} + \|G\|_{L_2} \|\sigma\|_{L_2} \\ + \|H\|_{L_2} \|\eta\|_{L_2}
+ C \|\sigma_{in}\|^2_{L_2(\Gamma_{in})} + E \|\sigma\|^2_{L_2} + \sum_{i=1}^2\|B_i\|_{L_2(\Gamma)}\|u\|_{L_2(\Gamma)}.
\end{array}
\end{equation}
Recall that E denotes a small constant and so to complete the proof we have to find the bound on
$\|\sigma\|_{L_\infty(L_2)}$. But this follows from the existence result in the previous subsection, see \eqref{est_S}. Hence we have
\begin{equation} \label{ene4}
\|\sigma\|_{L_\infty(L_2)} \leq C \Big( \|u\|_{W^1_2} + \|G\|_{L_2} + \|\sigma_{in}\|_{L_2(\Gamma_{in})}\Big) .
\end{equation}
Combining (\ref{ene3}) and (\ref{ene4}) we conclude (\ref{ene}). \end{proof}

\subsection{$L_p$ estimates}
  
The main result of this section is 
\begin{lem} \label{lem_est_lin}
Let $(u,\sigma,\eta)$ solve  system (\ref{system_lin}) with 
$(F,G,H,B_i,\sigma_{in}, U) \in L_p \times W^1_p \times L_p \times W^{1-1/p}_p(\Gamma) \times W^1_p(\Gamma_{in}) \times W^2_p$,
where $\|U\|_{W^2_p}$ is small enough and $\kappa, L, \alpha$ satisfy the assumptions of Theorem \ref{main_thm}. Then 
\begin{equation} \label{est_lin}
\begin{array}{c}
\|u\|_{W^2_p} + \|\sigma\|_{W^1_p} + \|\eta\|_{W^2_p} \\
\leq C \, \big[\|F\|_{L_p} + \|G\|_{W^1_p} + \|H\|_{L_p} + \|\sigma_{in}\|_{W^1_p(\Gamma_{in})}
+ \sum_{i=1}^2\|B_i\|_{W^{1-1/p}(\Gamma)} \big] := D_{lin}.
\end{array}
\end{equation}
\end{lem}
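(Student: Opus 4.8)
The plan is to bootstrap from the energy estimate of Lemma~\ref{l 8} to the full $W^2_p\times W^1_p\times W^2_p$ estimate by decoupling the system into an elliptic (Lam\'e) part, a transport part, and a parabolic (heat) part, treating the coupling terms as lower-order data controlled either by the energy norm or by absorption. The first step is to rewrite \eqref{system_lin}$_1$ as a Lam\'e system for $u$ with right-hand side $F - \partial_{x_1}u - p_1\nabla\sigma - p_2\nabla\eta$ and boundary data $B_i$, then invoke Lemma~\ref{lem_Lame} to get
\begin{displaymath}
\|u\|_{W^2_p} \le C\big[\|F\|_{L_p} + \|\partial_{x_1}u\|_{L_p} + \|\nabla\sigma\|_{L_p} + \|\nabla\eta\|_{L_p} + \textstyle\sum_i\|B_i\|_{W^{1-1/p}_p(\Gamma)}\big].
\end{displaymath}
The term $\|\partial_{x_1}u\|_{L_p}$ is absorbed via the interpolation inequality (Lemma~\ref{lem_int}), leaving $C\|u\|_{L_2}$ which is already controlled by the energy estimate. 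The real issue is that $\nabla\sigma$ appears on the right, so I cannot close this alone; it must be combined with an estimate for $\sigma$ in $W^1_p$.

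For the density, differentiate the transport equation \eqref{system_lin}$_2$ and apply the steady transport estimate of Lemma~\ref{lem_S} (and its differentiated analogue, which follows the same characteristics argument): $\partial_{x_1}\sigma + U\cdot\nabla\sigma = G - {\rm div}\,u$, so $\sigma = S(G - {\rm div}\,u)$ with $w_{in}=\sigma_{in}$, and differentiating gives $\partial_j\sigma$ solving a transport equation with source $\partial_j(G-{\rm div}\,u) - \partial_j U\cdot\nabla\sigma$. Since $\|U\|_{W^2_p}$ is small, the term $\partial_j U\cdot\nabla\sigma$ is absorbed by smallness, yielding
\begin{displaymath}
\|\sigma\|_{W^1_p} \le C\big[\|G\|_{W^1_p} + \|{\rm div}\,u\|_{W^1_p} + \|\sigma_{in}\|_{W^1_p(\Gamma_{in})}\big] \le C\big[\|G\|_{W^1_p} + \|u\|_{W^2_p} + \|\sigma_{in}\|_{W^1_p(\Gamma_{in})}\big].
\end{displaymath}
This now couples badly with the Lam\'e estimate ($\|u\|_{W^2_p}$ bounds $\|\sigma\|_{W^1_p}$ which bounds $\|\nabla\sigma\|_{L_p}$ which feeds back into $\|u\|_{W^2_p}$), so the key trick — as in \cite{TP1} — is to use the structure of the slip boundary conditions together with the Helmholtz decomposition $u = \nabla\varphi + w$ with ${\rm div}\,w=0$: the gradient part $\nabla\varphi$ is controlled through an elliptic problem for $\varphi$ whose source is ${\rm div}\,u$, which via \eqref{system_lin}$_2$ equals $G - \partial_{x_1}\sigma - U\cdot\nabla\sigma$, and crucially $\partial_{x_1}\sigma$ can be paired against the pressure gradient $p_1\nabla\sigma$ in \eqref{system_lin}$_1$ to produce a good sign / gain of one derivative on $\sigma$ rather than a loss. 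Concretely, taking the divergence of \eqref{system_lin}$_1$ and combining with \eqref{system_lin}$_2$ gives an elliptic equation for ${\rm div}\,u$ (or for an effective-flux-type quantity) that lets $\|\sigma\|_{W^1_p}$ be estimated by $D_{lin}$ plus $E(\|u\|_{W^2_p}+\|\sigma\|_{W^1_p}+\|\eta\|_{W^2_p})$ plus lower-order energy terms.

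For the temperature, \eqref{system_lin}$_3$ is $-\kappa\Delta\eta = H - r_0\partial_{x_1}\eta - r_1{\rm div}\,u$ with the Robin condition $\kappa\partial_n\eta + L\eta = 0$; Lemma~\ref{lem_Neumann}-type regularity for the Robin problem (the same symmetric reflection handles the corners) gives $\kappa\|\eta\|_{W^2_p} \le C[\|H\|_{L_p} + \|\partial_{x_1}\eta\|_{L_p} + \|{\rm div}\,u\|_{L_p}]$, and again $\|\partial_{x_1}\eta\|_{L_p}$ is absorbed by interpolation into $\|\eta\|_{W^2_p}$ (at the cost of $C\|\eta\|_{L_2}$, controlled by the energy estimate) since $\kappa$ is large; the term $\|{\rm div}\,u\|_{L_p} \le \|u\|_{W^2_p}$, with a small prefactor after dividing by large $\kappa$. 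Collecting the three estimates, adding them, and using the energy bound \eqref{ene} to absorb all the $L_2$-level remainders, one arrives at
\begin{displaymath}
\|u\|_{W^2_p} + \|\sigma\|_{W^1_p} + \|\eta\|_{W^2_p} \le C\,D_{lin} + E\big(\|u\|_{W^2_p} + \|\sigma\|_{W^1_p} + \|\eta\|_{W^2_p}\big),
\end{displaymath}
and absorbing the $E$-term (using smallness of $\|U\|_{W^2_p}$, largeness of $\kappa$, $L$, $\alpha$) yields \eqref{est_lin}. The main obstacle is precisely the circular coupling between $\|u\|_{W^2_p}$ and $\|\sigma\|_{W^1_p}$ through the pressure gradient and the continuity equation: resolving it requires the Helmholtz/effective-flux argument and careful tracking of which constants are genuinely small versus merely bounded, so that every feedback term carries either a factor $E$ or a factor $1/\kappa$.
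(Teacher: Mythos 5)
Your proposal follows essentially the same route as the paper: Lam\'e elliptic estimate for $u$, elliptic estimate for $\eta$ handled by interpolation and the energy bound, and --- crucially --- the effective-viscous-flux/Helmholtz argument that converts the continuity equation into a damped transport equation $\gamma\sigma + \partial_{x_1}\sigma + U\cdot\nabla\sigma = K$ with $\|K\|_{W^1_p}\le \delta\|u\|_{W^2_p}+C(\delta)D_{lin}$, after which everything closes by absorption exactly as you describe. The one step you leave implicit is how the solenoidal part $A$ of the Helmholtz decomposition is controlled in $W^2_p$: the paper does not take the divergence of the momentum equation but rather its curl, exploiting that the slip conditions supply boundary data for the tangential components of the vorticity with the regularity of the velocity itself, which yields $\|{\rm curl}\,u\|_{W^1_p}\le \epsilon\|u\|_{W^2_p}+C(\epsilon)D_{lin}$ and hence the needed bound on $\|A\|_{W^2_p}$ without losing a derivative.
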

\noindent
The proof of (\ref{est_lin}) will be performed in several consecutive lemmas.
We show the bound on $\|\eta\|_{W^2_p}$. 
Then we proceed with the bound on $\|\sigma\|_{W^1_p}$ which is the most demanding.
First we show bound on the vorticity which, together with Helmholtz decomposition
of the velocity, makes possible to eliminate the term ${\rm div}\,u$ from the continuity equation,
leading to (\ref{trans}). Using this equation we show the bound on the density.
Then (\ref{est_lin}) easily follows from the classical elliptic estimate on $\|u\|_{W^2_p}$.
 
We start with the bound on $\eta$; to show it let us
rewrite (\ref{system_lin})$_{3,7}$ as 
\begin{equation}
\begin{array}{lr}
- \kappa \Delta \eta = H - r_0 \partial_{x_1}\eta - r_1 {\rm div}\,u & \mbox{ in } \Omega,\\
\frac{\partial \eta}{\partial n}|_{\Gamma} = - \frac{L}{\kappa} \eta & \mbox{ on } \Gamma.
\end{array}
\end{equation}
The classical elliptic estimate (\cite{ADN1}, \cite{ADN2}) for the above system yields
\begin{displaymath}
\|\eta\|_{W^2_p} \leq C \big[ \|H\|_{L_p} + \|\eta\|_{W^1_p} + \|\eta\|_{W^{1-1/p}_p(\Gamma)} + \|u\|_{W^1_p} \big].
\end{displaymath}
Applying the trace theorem to the boundary term and then the interpolation inequality (\ref{int1}) we get
\begin{equation} 
\|\eta\|_{W^2_p} \leq C \big[\|H\|_{L_p} + \|\eta\|_{W^1_2} + \|u\|_{W^1_2}\big] + \epsilon \|u\|_{W^2_p}.
\end{equation}
By the energy estimate (\ref{ene}) we conclude
\begin{equation} \label{eta_w2p}
\|\eta\|_{W^2_p} \leq C D_{lin} + \epsilon \|u\|_{W^2_p}.
\end{equation}
Now we proceed towards the bound on $\|\sigma\|_{W^1_p}$. As mentioned before, we start with the estimate
on the vorticity. Taking the curl of (\ref{system_lin})$_1$ we get
\begin{equation}   \label{system_rot}
\begin{array}{lcr}
- \mu \Delta \alpha = {\rm curl}\,[F-\partial_{x_1}u] & \mbox{in} & \Omega, \\
\alpha \cdot \tau_2 = (2 \chi_1 - \frac{\alpha}{\mu}) u \cdot \tau_1 + \frac{B_1}{\mu} & \mbox{on} & \Gamma, \\
\alpha \cdot \tau_1 = (\frac{\alpha}{\mu} - 2 \chi_2) u \cdot \tau_2 - \frac{B_2}{\mu} & \mbox{on} & \Gamma, \\
{\rm div}\,\alpha = 0 & \mbox{on} & \Gamma,
\end{array}
\end{equation}
where $\chi_i$ denote the curvatures of the curves generated by tangent vectors $\tau_i$.
In order to show the boundary relations (\ref{system_rot})$_{2,3}$ it is enough to differentiate
(\ref{system_lin})$_4$ with respect to the tangential directions and apply (\ref{system_lin})$_3$.
A rigorous proof is given in \cite{MR} or \cite{TP1}. 

The condition ${\rm div}\, \alpha = 0$ in $\Omega$ results simply
from the fact that $\alpha = {\rm curl}\,u$. We introduce this relation as a boundary condition
(\ref{system_rot})$_4$, that completes
the conditions on the tangential parts of the vorticity. Note that
the boundary conditions (\ref{system_rot})$_{2,3}$ give the tangential parts of
the vorticity on the boundary the regularity of the velocity itself and the data.
We will now use this feature of slip boundary conditions to show the higher estimate on the vorticity
(see \cite{PM1},\cite{PM2}, \cite{PMMP2}).

%
%
%
%
For the above system we have (see \cite{Z}, Theorem 10.4):
\begin{equation} \label{alpha_w1p_1}
\|\alpha\|_{W^1_p} \leq C \, \big[\|F\|_{L_p} + \|\partial_{x_1}u\|_{L_p} + \sum_{i=1}^2\|B_i\|_{W^{1-1/p}_p(\Gamma)} + \|u\|_{W^{1-1/p}_p(\Gamma)} \big].
\end{equation}
Applying the interpolation inequality (\ref{int1}) and then (\ref{ene}) 
we arrive at
\begin{equation}  \label{rotuw1p}
\|\alpha\|_{W^1_p} \leq C(\epsilon) D_{lin}  + \epsilon \|u\|_{W^2_p}
\end{equation}
for any $\epsilon>0$, where $D_{lin}$ is defined in (\ref{est_lin}). 
Now we introduce the Helmholtz decomposition of the velocity (see e.g. \cite{Ga})
\begin{equation} \label{Helm}
u = \nabla \phi + A,
\end{equation}
where $\frac{\partial \phi}{\partial n}|_{\Gamma}=0$ and ${\rm div}\, A =0$.
We see that the field $A$ satisfies the following system
\begin{equation}
\begin{array}{lcr}
{\rm curl}\, A = \alpha & \mbox{in} & \Omega, \\
{\rm div}\, A = 0 & \mbox{in} & \Omega, \\
A \cdot n = 0 & \mbox{on} & \Gamma.
\end{array}
\end{equation}
For this system we have (see \cite{Sol}):
$
\|A\|_{W^2_p} \leq C \, \|\alpha\|_{W^1_p} ,
$
what by (\ref{alpha_w1p_1}) can be rewritten as
\begin{equation}  \label{Aw2p}
\|A\|_{W^2_p} \leq C(\epsilon) \, \big[ \|F\|_{L_p}   + \sum_{i=1}^2\|B_i\|_{W^{1-1/p}_p(\Gamma)}
	 \big] + \epsilon \|u\|_{W^2_p}
\end{equation}
for any $\epsilon>0$.
Now we substitute the Helmholtz decomposition to (\ref{system_lin})$_1$. We get
\begin{equation}  \label{nablaK}
\textstyle \nabla [ -(\lambda + \frac 43 \mu) \Delta \phi + p_1 \, \sigma ] =
F - \partial_{x_1} A + \mu \Delta A  - \partial_{x_1} \nabla \phi - p_2\nabla \eta,
\end{equation}
but $\Delta \phi = {\rm div}\, u$.  We denote
\begin{equation}
\textstyle -(\lambda + \frac 43 \mu) {\rm div}\,u + p_1 \, \sigma = \bar K.
\end{equation}
Combining the last equation with (\ref{system_lin})$_2$ we arrive at
\begin{equation} \label{trans}
 \gamma \sigma + \partial_{x_1}\sigma + U\cdot  \nabla \sigma = K,
\end{equation}
where $ \gamma = \frac{p_1}{\lambda + \frac 43 \mu}$ and
\begin{equation} \label{K}
K = \frac{\bar K}{\lambda + \frac 43 \mu}+G.
\end{equation}
Equation (\ref{trans}) makes  possible to estimate the $W^1_p$-norm of the density
in terms of $W^1_p$ - norm of $K$, which in turn
will be controlled by (\ref{nablaK}) using interpolation and the energy estimate (\ref{ene1}).
First we estimate $\|\sigma\|_{W^1_p}$ in terms of $K$. The result is stated in the following lemma
\begin{lem}
Assume that $\sigma$ satisfies  equation (\ref{trans}) with $K \in W^1_p$. Then
\begin{equation} \label{w_w1p}
\|\sigma\|_{W^1_p} \leq C \, \big[ \|K\|_{W^1_p} + \|\sigma_{in}\|_{W^1_p(\Gamma_{in})} \big].
\end{equation}
\end{lem}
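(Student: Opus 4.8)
The plan is to treat \eqref{trans} as a steady transport equation of exactly the type handled in Subsection 2.4, and to obtain the $W^1_p$ bound by differentiating the equation and applying the same characteristic-integration estimate one level higher. First I would recall that the operator $S$ from \eqref{def_S} is bounded $L_2\to L_\infty(L_2)$, but here we need an $L_p$-based estimate; the point is that the explicit representation \eqref{S} works in any $L_q(\Omega_{x_1})$-norm along the cuts, so with $h=K-\gamma\sigma$ (or more directly by absorbing the zeroth-order term $\gamma\sigma$, which is harmless since $\gamma>0$ acts with a favourable sign along the characteristics) one gets
\[
\|\sigma\|_{L_\infty(L_p)} \le C\big[\|\sigma_{in}\|_{L_p(\Gamma_{in})} + \|K\|_{L_p(\Omega)}\big],
\]
hence in particular $\|\sigma\|_{L_p(\Omega)}\le C[\|\sigma_{in}\|_{L_p(\Gamma_{in})}+\|K\|_{W^1_p}]$.

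**Estimating the derivatives.**
Next I would differentiate \eqref{trans} with respect to $x_j$, $j=1,2,3$. Writing $\sigma_j=\partial_{x_j}\sigma$ one obtains, schematically,
\[
\gamma\sigma_j + \partial_{x_1}\sigma_j + U\cdot\nabla\sigma_j = \partial_{x_j}K - (\partial_{x_j}U)\cdot\nabla\sigma .
\]
This is again a transport equation for $\sigma_j$ with the \emph{same} transport field $U$ and the same favourable zeroth-order coefficient $\gamma$, with right-hand side $\partial_{x_j}K-(\partial_{x_j}U)\cdot\nabla\sigma$ and with boundary datum on $\Gamma_{in}$ given by the tangential derivatives of $\sigma_{in}$ (for $j=2,3$) and, for $j=1$, the trace of $\partial_{x_1}\sigma$ recovered from the equation itself, i.e. $\partial_{x_1}\sigma = K-\gamma\sigma-U\cdot\nabla\sigma$ on $\Gamma_{in}$; all of these are controlled by $\|\sigma_{in}\|_{W^1_p(\Gamma_{in})}$ together with already-estimated quantities. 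Applying the characteristic estimate again gives
\[
\|\nabla\sigma\|_{L_\infty(L_p)} \le C\big[\|\sigma_{in}\|_{W^1_p(\Gamma_{in})} + \|K\|_{W^1_p} + \|\nabla U\|_{L_\infty}\,\|\nabla\sigma\|_{L_p(\Omega)}\big].
\]
Since $U\in W^2_p$ with $p>3$, the Sobolev imbedding gives $\|\nabla U\|_{L_\infty}\le C\|U\|_{W^2_p}$, which is small by hypothesis; the last term is therefore absorbed into the left-hand side. Combining with the zeroth-order bound yields \eqref{w_w1p}.

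**Main obstacle.**
The delicate point is not the algebra but making the differentiated transport equation rigorous: $\sigma$ is only known a priori to lie in $L_\infty(L_2)$ from Lemma \ref{lem_S}, so one cannot simply differentiate. The clean way is to carry everything through the change of variables $x=\psi(z)$ of Lemma \ref{lem_change}, under which \eqref{trans} becomes the ODE-in-$z_1$ problem $\gamma\tilde\sigma+\partial_{z_1}\tilde\sigma=\tilde K$ (with $\tilde\sigma=\sigma\circ\psi$ etc.), solvable explicitly; then $W^1_p$ regularity of $\tilde\sigma$ follows from $W^1_p$ regularity of $\tilde K$ and of the data, using that $\psi$ is a $W^2_p$-diffeomorphism close to the identity (so it preserves $W^1_p$ up to constants), and that the factor $e^{-\gamma z_1}$ appearing in the solution formula is bounded. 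One must also check that the boundary term lives on $\mathcal{O}_{in}=\Gamma_{in}$ and that the characteristics starting from $\Gamma_{in}$ foliate $\Omega$ and exit through $\Gamma_{out}$ — both guaranteed by \eqref{3.1} and the property $n^1=0$ on $\mathcal{O}_0$ established in Lemma \ref{lem_change}. This is where the smallness of $\|U\|_{W^2_p}$ is genuinely used; the rest is routine. I would also remark that the analogous estimate with $\|K\|_{W^1_p}$ replaced by $\|K\|_{W^1_2}$ and $L_p$ by $L_2$ holds by the same argument, which is what is needed later for the contraction estimates, but for the present lemma the $L_p$ version suffices.
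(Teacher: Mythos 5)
Your argument is correct, but it runs along a genuinely different track from the paper's. You estimate $\sigma$ and its derivatives by integrating along characteristics: you pass to the Lagrangian coordinates of Lemma \ref{lem_change}, solve the resulting ODE $\gamma\tilde\sigma+\partial_{z_1}\tilde\sigma=\tilde K$ explicitly, and read off $L_\infty(L_p)$ bounds from the representation formula, repeating the procedure for the differentiated equation. The paper instead stays in Eulerian variables and uses the $L_p$ multiplier method: it multiplies \eqref{trans} (respectively its $x_i$-derivative) by $|\sigma|^{p-2}\sigma$ (respectively $|\partial_{x_i}\sigma|^{p-2}\partial_{x_i}\sigma$), integrates by parts, and exploits the sign conditions \eqref{3.1} on $U\cdot n$ so that the boundary terms on $\Gamma_{out}$ come with a good sign while those on $\Gamma_{in}$ are data; see \eqref{w1} and \eqref{wxi}. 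The two proofs share every structural ingredient: the favourable sign of $\gamma$, the commutator term $(\partial_{x_i}U)\cdot\nabla\sigma$ bounded by $\|U\|_{W^2_p}\|\nabla\sigma\|_{L_p}$ and absorbed by smallness, and the recovery of $\partial_{x_1}\sigma|_{\Gamma_{in}}$ from the equation itself (note you should divide by $1+U^1$ there, as in the paper, though this is harmless). Your route buys a slightly stronger conclusion ($L_\infty(L_p)$ in the axial variable rather than just $L_p(\Omega)$) and makes the role of the geometry ($n^1=0$ on ${\cal O}_0$, characteristics exiting through $\Gamma_{out}$) explicit, at the price of having to justify the $W^1_p$ calculus under the change of variables $\psi$; the paper's multiplier computation avoids the diffeomorphism entirely and handles the weak meaning of $\tilde u\cdot\nabla\partial_{x_i}\sigma$ by the duality definition given just above \eqref{tilde-u}. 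Both are legitimate; no gap.
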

\begin{proof} 
In order to find a bound on $\|\sigma\|_{L_p}$ we multiply (\ref{trans}) by $|\sigma|^{p-2} \sigma$
and integrate over $\Omega$. Integrating by parts
and next using the boundary conditions we get

\begin{equation} \label{w1}
\begin{array}{c}
 \gamma \|\sigma\|_{L_p}^p - \frac{1}{p} \int_{\Omega} {\rm div}\, U \, |\sigma|^p \,dx + \frac{1}{p} \int_{\Gamma_{out}} |\sigma|^p \, (1+U \cdot n)  \,dS \\
\leq \|K\|_{L_p} \, \|\sigma\|_{L_p}^{p-1} + \frac{1}{p} \int_{\Gamma_{in}} |\sigma_{in}|^p \, (1-U \cdot n) \,dS.
\end{array}
\end{equation}
With the smallness of $\|U \|_{W^2_p}$, the above implies 
$$
C \, \|\sigma\|_{L_p}^p \leq
\|K\|_{L_p} \, \|\sigma\|_{L_p}^{p-1} + C \, \|\sigma_{in}\|_{L_p(\Gamma_{in})}^p,
$$
and so
\begin{equation} \label{w}
\|\sigma\|_{L_p} \leq C \, \big[ \|K\|_{L_p} + \|\sigma_{in}\|_{L_p(\Gamma_{in})} \big] .
\end{equation}

Now we estimate the derivatives. In order to find a bound on $\partial_{x_i}\sigma$
we differentiate (\ref{trans}) with respect to $x_i$. Note that $\nabla^2 \sigma$ is not defined in general, however, for
%
\begin{equation} \label{tilde-u}
\tilde u := [1 + U^{1}, U^{2}, U^{3}]
\end{equation}
and $\sigma \in W^1_p$ we may set
 $\tilde u \cdot \nabla \partial_{x_i}\sigma := \partial_{x_i}(\tilde u \cdot \nabla \sigma) -  \partial_{x_i}\tilde u \cdot \nabla \sigma \in L_p$.

Hence we can differentiate (\ref{trans}) with respect to $x_i$, multiply by $|\partial_{x_i}\sigma|^{p-2} \partial_{x_i}\sigma$ and integrate.
Since $\partial_{x_i}\tilde u = \partial_{x_i} U$, we have
$$
\Big|\int_{\Omega} \partial_{x_i}\tilde u \cdot (|\partial_{x_i} \sigma|^{p-2} \partial_{x_i}\sigma \nabla \sigma) \,dx\Big| \leq
\|\nabla U\|_{L_{\infty}} \, \|\nabla \sigma\|_{L_p}^p \leq C \, \|U\|_{W^2_p} \, \|\nabla \sigma\|^p_{L_p}.
$$
Next, since $\tilde u \cdot \nabla \partial_{x_i}\sigma \in L_p$, we can write   
$$
\begin{array}{c}
\int_{\Omega} \tilde u \cdot |\partial_{x_i}\sigma|^{p-2} \partial_{x_i}\sigma \nabla \partial_{x_i}\sigma \,dx =
\frac{1}{p} \int_{\Omega} \tilde u \cdot \nabla |\partial_{x_i}\sigma|^p \,dx \\[5pt]
= - \frac{1}{p} \int_{\Omega} |\partial_{x_i}\sigma|^p \, {\rm div}\, \tilde u \,dx + \frac{1}{p} \int_{\Gamma} |\partial_{x_i}\sigma|^p \, \tilde u \cdot n \, dS 
\\[5pt]
= - \frac{1}{p} \int_{\Omega} |\partial_{x_i}\sigma|^p \, {\rm div}\, U \,dx - \frac{1}{p} \int_{\Gamma_{in}} |\partial_{x_i}\sigma_{in}|^p \, (1+U^1) \,dS
+ \frac{1}{p} \int_{\Gamma_{out}} |\partial_{x_i}\sigma|^p \, (1+U^1) \,dS.
\end{array}
$$
For $i=2,3$ we have $\sigma_{in,x_i} \in L_p(\Gamma_{in})$ and hence the above defines the trace of $|\partial_{x_i}\sigma|^p$ on $\Gamma_{out}$.
We arrive at
\begin{equation}  \label{wxi}
\begin{array}{c}
 \gamma \|\partial_{x_i}\sigma\|_{L_p}^p - \frac{1}{p} \int_{\Omega} {\rm div} \, U \, |\partial_{x_i}\sigma|^p \,dx + \frac{1}{p} \int_{\Gamma_{out}} |\partial_{x_i}\sigma|^p \, (1+U^1)  \,dS  \\
\leq \|\partial_{x_i}K\|_{L_p} \, \|\partial_{x_i}\sigma\|_{L_p}^{p-1} + \frac{1}{p} \int_{\Gamma_{in}} |\partial_{x_i}\sigma_{in}|^p \, (1+U^1) \,dS + C \, \|U\|_{W^2_p} \|\nabla \sigma\|_{L_p}^p.
\end{array}
\end{equation}
For $i=2,3$ (\ref{wxi}) gives straightforward bound on $\|\partial_{x_i}\sigma\|_{L_p}$.
In order to estimate $\partial_{x_1}\sigma$ we also differentiate (\ref{trans}) with respect to $x_1$
and multiply by $|\partial_{x_1}\sigma|^{p-2} \partial_{x_1}\sigma$. The difference is that
$\partial_{x_1}\sigma$ is not given on $\Gamma_{in}$. To overcome this difficulty we can observe that
on $\Gamma_{in}$  equation (\ref{trans}) reduces to
$$
\gamma \sigma_{in} + U^2 \, \partial_{x_2}\sigma_{in} + U^3 \, \partial_{x_3}\sigma_{in} + [1 + U^1] \,\partial_{x_1}\sigma = K,
$$
what can be rewritten as
$$
\partial_{x_1}\sigma = \frac{1}{1+U^1} \, \big[ K -  \gamma \sigma_{in} - U_{\tau} \cdot \nabla_{\tau} \sigma_{in} \big],
$$
where the lower index $\tau$ denotes the tangential component. Thus we have
$$
\|\partial_{x_1}\sigma\|_{L_p(\Gamma_{in})} \leq C \, \big[ \|K|_{\Gamma_{in}}\|_{L_p(\Gamma_{in})} + \|\sigma_{in}\|_{W^1_p(\Gamma_{in})} \big].
$$
Using this bound in (\ref{wxi}), $i=1$, we arrive at the estimate
\begin{equation} \label{wx1}
\|\partial_{x_1}\sigma\|_{L_p}^p \leq C \, \big[ \|\partial_{x_1}K\|_{L_p} \, \|\partial_{x_1}\sigma\|_{L_p}^{p-1} + \|U\|_{W^2_p} \, \|\nabla \sigma\|_{L_p}^p
+ \|K\|_{L_p(\Gamma_{in})}^p + \|\sigma_{in}\|_{W^1_p(\Gamma_{in})}^p \big].
\end{equation}
Applying the trace theorem to the term $\|K\|_{L_p(\Gamma_{in})}$ and then
combining (\ref{wxi}) (for $x_2$ and $x_3$) with (\ref{wx1}) we get
\begin{equation}
\|\nabla \sigma\|_{L_p}^p \leq C \, \big[ \|\nabla K\|_{L_p} \|\nabla \sigma\|_{L_p}^{p-1}
+ \|U\|_{W^2_p} \|\nabla \sigma\|_{L_p}^p + \|K\|_{W^1_p}^p + \|\sigma_{in}\|_{W^1_p(\Gamma_{in})}^p \big].
\end{equation}
The term $\|U\|_{W^2_p} \|\nabla \sigma\|_{L_p}^p$ can be put on the l.h.s. due to the smallness assumption
and thus by Young's inequality 
\begin{equation}
\|\nabla \sigma\|_{L_p} \leq C \, \big[ \|K\|_{W^1_p} + \|\sigma_{in}\|_{W^1_p(\Gamma_{in})} \big],
\end{equation}
what combined with (\ref{w}) yields
\begin{equation} \label{w_w1p_1}
\|\sigma\|_{W^1_p} \leq C \, \big[ \|K\|_{W^1_p}  + \|\sigma_{in}\|_{W^1_p(\Gamma_{in})} \big].
\end{equation}
The lemma is proved. \end{proof}

Now we estimate $K$ in terms of the data. The result is 
\begin{lem}   \label{lemK}
Let $K$ be defined in (\ref{K}). Then $\forall \delta>0$ we have
\begin{equation}    \label{lemK_teza}
\|K\|_{W^1_p} \leq
\delta \|u\|_{W^2_p} + C(\delta) D_{lin},
\end{equation}
where $D_{lin}$ is defined in (\ref{est_lin}).
\end{lem}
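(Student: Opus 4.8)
The plan is to estimate $K = \frac{\bar K}{\lambda + \frac 43 \mu} + G$ through its two pieces. The term $\|G\|_{W^1_p}$ is already one of the quantities comprising $D_{lin}$, so it needs nothing. For $\bar K$ I would split $\|\bar K\|_{W^1_p} \le C(\|\nabla \bar K\|_{L_p} + \|\bar K\|_{L_p})$ and handle the gradient via the identity \eqref{nablaK}, and the $L_p$ norm itself via a mean-value trick.

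For $\|\nabla \bar K\|_{L_p}$ I would simply read off \eqref{nablaK}, obtaining $\|\nabla \bar K\|_{L_p} \le C(\|F\|_{L_p} + \|\partial_{x_1} A\|_{L_p} + \|\Delta A\|_{L_p} + \|\partial_{x_1}\nabla\phi\|_{L_p} + \|\nabla\eta\|_{L_p})$. Here $\|F\|_{L_p} \le D_{lin}$; the two terms containing $A$ are at most $C\|A\|_{W^2_p}$ and hence, by \eqref{Aw2p}, at most $C(\epsilon)D_{lin} + \epsilon\|u\|_{W^2_p}$; for $\partial_{x_1}\nabla\phi$ I use $\nabla\phi = u - A$ together with the interpolation inequality \eqref{int1} and the energy estimate \eqref{ene} for the $u$-part and \eqref{Aw2p} for the $A$-part; and $\|\nabla\eta\|_{L_p} \le \|\eta\|_{W^1_p} \le \epsilon\|\eta\|_{W^2_p} + C(\epsilon)\|\eta\|_{W^1_2}$, which by \eqref{eta_w2p} and \eqref{ene} is at most $\epsilon\|u\|_{W^2_p} + C(\epsilon)D_{lin}$. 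Summing and choosing $\epsilon$ small relative to $\delta$ gives $\|\nabla\bar K\|_{L_p} \le \frac{\delta}{2}\|u\|_{W^2_p} + C(\delta)D_{lin}$.

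It then remains to bound $\|\bar K\|_{L_p}$, and this is the only step needing care: since $\bar K = -(\lambda+\frac 43\mu)\,{\rm div}\,u + p_1\sigma$, it contains $\sigma$, for which no $L_p$ bound is yet available. The way around is the Poincar\'e--Wirtinger inequality $\|\bar K - (\bar K)_\Omega\|_{L_p} \le C\|\nabla\bar K\|_{L_p}$ with $(\bar K)_\Omega := \frac{1}{|\Omega|}\int_\Omega \bar K\,dx$; using $\int_\Omega {\rm div}\,u\,dx = \int_\Gamma u\cdot n\,dS = 0$ (boundary condition \eqref{system_lin}$_5$) the mean reduces to $(\bar K)_\Omega = \frac{p_1}{|\Omega|}\int_\Omega\sigma\,dx$, whose absolute value is at most $C\|\sigma\|_{L_2(\Omega)} \le C\|\sigma\|_{L_\infty(L_2)} \le CD_{lin}$ by the energy estimate \eqref{ene}. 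Hence $\|\bar K\|_{L_p} \le C\|\nabla\bar K\|_{L_p} + CD_{lin}$, and combining with the previous step, once more absorbing the $\epsilon\|u\|_{W^2_p}$ contributions, yields \eqref{lemK_teza}.

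The apparent circularity --- $K$ is meant to control $\sigma$ in \eqref{w_w1p}, yet $\bar K$ contains $\sigma$ --- is what one has to be careful about, but it dissolves for two reasons: in \eqref{nablaK} the term $p_1\nabla\sigma$ has already been absorbed into the effective flux $\bar K$, so $\nabla\sigma$ never reappears on the right-hand side; and for the additive constant in the Poincar\'e inequality only the zeroth-order quantity $\|\sigma\|_{L_2}$ is needed, which the energy estimate already supplies. Everything else is routine interpolation, for which Lemma \ref{lem_int} and $p>3$ suffice.
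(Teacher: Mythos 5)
Your proof is correct and follows essentially the same route as the paper: $\nabla\bar K$ is read off from \eqref{nablaK} and controlled via \eqref{Aw2p}, \eqref{eta_w2p}, the interpolation inequality and the energy estimate, with $G$ absorbed into $D_{lin}$. The only (harmless) variation is in the zeroth-order term, where the paper applies \eqref{int1} to $K$ and bounds $\|K\|_{L_2}$ by the energy estimate, while you use Poincar\'e--Wirtinger together with $\int_\Omega {\rm div}\, u\,dx=0$ and the $L_2$ bound on $\sigma$ --- both reduce to the same energy estimate \eqref{ene}.
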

\noindent
\begin{proof}
Applying first (\ref{int1}) and then (\ref{ene1})
we get
\begin{equation}     \label{Klp}
\|K\|_{L_p} \leq \delta_1 \|\nabla K\|_{L_p}
+ C(\delta_1) \, \big[ \|F\|_{L_2} + \|G\|_{L_2} + \|B\|_{L_2(\Gamma)} \big].
\end{equation}
Hence, it is enough to find the bound on $\|\nabla K\|_{L_p}$. By (\ref{nablaK}) we have
$$
\|\nabla K\|_{L_p} \leq C \, \big[ \|F\|_{L_p} + \|G\|_{W^1_p} + \|A\|_{W^2_p} + \|\partial_{x_1} \nabla\phi\|_{L_p} 
+ \|\nabla \eta\|_{L_p}\big],
$$
where $u = \nabla \phi + A$ is the Helmholtz decomposition of the velocity.
Applying (\ref{Aw2p}), (\ref{eta_w2p}), trace theorem and (\ref{int1})
we arrive at
\begin{equation}    \label{nablahlp}
\begin{array}{c}
\|\nabla K\|_{L_p} \leq C \, \big[ \|F\|_{L_p} + \|G\|_{W^1_p} +\|H\|_{L^p} + \|B\|_{W^{1-1/p}_p(\Gamma)} + \|\sigma_{in}\|_{W^1_p(\Gamma_{in})} \big] \\
+ \delta_1 \|u\|_{W^2_p} + C(\delta_1) \big[\|F\|_{L_2} + \|G\|_{L_2} + \|B\|_{L_2(\Gamma)} \big].
\end{array}
\end{equation}
Combining (\ref{Klp}) and (\ref{nablahlp}) we get (\ref{lemK_teza}).
\end{proof}

\noindent
The bound on $\sigma$ now follows directly.
Substituting (\ref{lemK_teza}) to (\ref{w_w1p}) we get
\begin{equation} \label{w_w1p_ost}
\|\sigma\|_{W^1_p} \leq \delta \|u\|_{W^2_p} + C(\delta) D_{lin},
\end{equation}
where $D_{lin}$ is given by (\ref{est_lin}).
Now the only missing piece to complete the proof of (\ref{est_lin}) is the bound on $\|u\|_{W^2_p}$.
To show it note that the velocity satisfies the Lam\'e system
\begin{eqnarray}
\begin{array}{lcr}
u_{x_1} -{\rm div}\, {\bf S}(\nabla u) =  F - p_1 \nabla  \sigma - p_2 \nabla \eta & \mbox{in} & \Omega,\\
({\bf S}(\nabla u) n)\cdot \tau_i +\alpha \ u \cdot \tau_i = B_i, \quad i=1,2
&\mbox{on} & \Gamma, \\
n\cdot  u = 0 & \mbox{on} & \Gamma.\\
\end{array}
\end{eqnarray}
The classical theory of elliptic equations (\cite{ADN1},\cite{ADN2}) yields
$$
\|u\|_{W^2_p} \leq C \, \big[D_{lin} + \|u\|_{W^1_p}\big].
$$
Applying (\ref{int1}) to the term $\|u\|_{W^1_p}$ and then (\ref{ene1})
we get
\begin{equation} \label{u_w2p}
\|u\|_{W^2_p} \leq C \,  D_{lin}.
\end{equation}
\emph{Proof of Lemma \ref{lem_est_lin}.} We combine (\ref{eta_w2p}), (\ref{u_w2p}) and (\ref{w_w1p_ost}). 
In (\ref{w_w1p_ost}) we choose for example 
$\delta = \frac{1}{2C}$ where $C$ is the constant from (\ref{u_w2p}). Now (\ref{est_lin}) follows immediately. \hfill $\square$

\section{Solution of the linear system}
In this section we solve the linear system (\ref{system_lin}) and hence show
that the sequence (\ref{system_seq}) is well defined. We start with showing the existence of an appropriately defined weak
solution applying the Galerkin method modified in a way to deal with the continuity equation.
Then we show the regularity of the solutions for the given  regularity of the data.  
For simplicity let us denote 
\begin{equation} \label{tilde_u}
\tilde u := [1 + U^1, U^2, U^3].
\end{equation}

\subsection{Weak solution}
In order to define a weak solution to (\ref{system_lin}) we recall the definition of the space $V$
(\ref{def_V}). 
%
%
A natural definition of a weak solution to the system (\ref{system_lin}) is 
$(u,\sigma,\eta) \in V \times L_\infty(L_2) \times W^1_2$ such that

\noindent
{\bf 1.} the identity
\begin{eqnarray} \label{weak1}
\int_{\Omega} \{ v \cdot \partial_{x_1} u  + {\bf S}(\nabla u) :\nabla v
- p_1 \sigma \, {\rm div}\,v - p_2 \eta {\rm div}\,v \} \,dx
+ \sum_{i=1}^2 \int_{\Gamma} \alpha (u \cdot \tau_i) \, (v \cdot \tau_i) \,dS  \nonumber\\
= \int_{\Omega} F \cdot v \,dx + \sum_{i=1}^2 \int_{\Gamma} B_i (v \cdot \tau_i) \,dS
\end{eqnarray}
is satisfied $\forall \; v \in V$; 

\noindent
{\bf 2.} (\ref{system_lin})$_2$ is satisfied in the following sense:
\begin{equation} \label{weak2}
-\int_{\Omega} \sigma \tilde u \cdot \nabla \phi \,dx - \int_{\Omega} \sigma \phi \, {\rm div}\, \tilde u \,dx =
\int_{\Omega} \phi (G - {\rm div}\, u) \,dx + \int_{\Gamma_{in}} \sigma_{in} \phi \, dS,
\end{equation}
$\forall \; \phi \in  C^{1}(\overline \Omega): \phi|_{\Gamma_{out}}=0$,
where $\tilde u$ is defined in (\ref{tilde_u}), and

\noindent
{\bf 3.} the equation
\begin{equation} \label{weak3}
r_0\int_{\Omega} \partial_{x_1}  \eta w + r_1 \int_{\Omega} w {\rm div}\,u + \kappa \int_{\Omega} \nabla w \cdot \nabla \eta 
+ \int_{\Gamma} Lw\eta \,dS = \int_{\Omega} Hw\,dx
\end{equation}
holds for all $w \in W^1_2$. 

We introduce an orthonormal basis of $V$ formed by $\{\omega_i\}_{i=1}^{\infty}$. We consider finite dimensional spaces:
\mbox{$V^N = \{ \sum_{i=1}^N \alpha_i \omega_i: \; \alpha_i \in \mathbb{R} \} \subset V$}.
The sequence of
approximations to the velocity will be searched for as
$
u^N = \sum_{i=1}^N c_i^N \, \omega_i.
$
Due to the equation (\ref{system_lin})$_2$ we have to define the approximations to the density
in an appropriate way. Namely, we set $\sigma^N = S(G - {\rm div}\, u^N)$,
where $S:L_2(\Omega) \to L_{\infty}(L_2)$ is defined in (\ref{def_S}), with $w_{in} = \sigma_{in}$. 

Finally, to define the approximations of the temperature we introduce $R:H^1 \to H^1$ as a weak solution
operator to equation (\ref{system_lin})$_{3,7}$, i.e. we set 
$$
\eta^N = R(u^N) \iff (\ref{weak3}) \; \textrm{holds with} \; \eta:= \eta^N \, \forall \; w \in W^1_2.
$$
The well-posedness of the operator $R$ is direct as it is just a solution operator for a standard elliptic equation.
To show it is well defined it is enough to recall the estimate from the proof of (\ref{ene}).
In the same way we show the estimate for (\ref{weak3}) which gives existence of $\eta$ satisfying
(\ref{weak3}) for given $u \in W^1_2$, hence $R$ is well defined.
In particular, we have
\begin{equation} \label{est_R}
\|R(u)\|_{W^1_2} \leq C [\|F\|_{L_2} + \|u\|_{W^1_2}].
\end{equation}

With the operators $S$ and $R$ well defined we can proceed with the Galerkin method.
Taking  $u = u^N = \sum_i c_i^N \, \omega_i$, $v = \omega_k$, $k=1 \ldots N$ and $\sigma = \sigma^N = S(G - {\rm div}\, u^N)$ in (\ref{weak1}), $\eta = \eta^N = R(u_N)$,
we arrive at a system of $N$ equations
\begin{equation}  \label{system_aprox}
B^N(u^N,\omega_k) = 0, \quad k = 1 \ldots N,
\end{equation}
where $B^N:V^N \to V^N$ is defined as
\begin{equation}
\begin{array}{c}
B^N(\xi^N,v^N) = \int_{\Omega} \big\{ v^N \partial_{x_1}\xi^N + {\bf S}(\xi^N):\nabla v^N \big\}\,dx \\
- p_1 \int_{\Omega} S(G-{\rm div}\,\xi^N) \, {\rm div}\,v^N \,dx 
- p_2\int_{\Omega} R(\xi^N) {\rm div}\,v^N \,dx\\
+ \sum_{j=1}^2\int_{\Gamma} [\alpha \, (\xi^N \cdot \tau_j) - B_j] \, (v^N \cdot \tau_j) \,dS - \int_{\Omega} F \cdot v^N \,dx.
\end{array}
\end{equation}
Now, if $u^N$ satisfies (\ref{system_aprox}) for $k = 1 \ldots N$ and  $\sigma^N$  
and $\eta^N$, defined as above, 
then the triple
$(u^N,\sigma^N,\eta^N)$ satisfies (\ref{weak1})--(\ref{weak3}) for $(v,\phi,w) \in (V^N \times  C^{1}(\overline\Omega)) \times W^1_2$,
$\phi|_{\Gamma_{out}}=0$.
We will call such a triple an approximate solution to (\ref{weak1})--(\ref{weak3}).

The following lemma gives existence of a solution to  system (\ref{system_aprox}):
\begin{lem}
Let $F,G \in L^2(\Omega)$, $\sigma_{in} \in L_2(\Gamma_{in})$, $B_i \in L_2(\Gamma)$, $i=1,2$.
Assume that $\|U\|_{W^2_p}$ and $\kappa$, $L$, $\alpha$ fulfill the assumptions of Theorem \ref{main_thm}.
Then there exists $u^N \in V^N$ satisfying (\ref{system_aprox}) for $k=1 \ldots N$.
Moreover,
\begin{equation}  \label{est_uN}
\|u^N\|_{W^1_2} \leq C(DATA).
\end{equation}
\end{lem}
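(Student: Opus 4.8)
The plan is to prove existence of $u^N$ solving \eqref{system_aprox} by applying the finite-dimensional topological fixed point result, Lemma \ref{lem_P}, to the operator $B^N$ on $V^N$, and then to extract the uniform bound \eqref{est_uN} from the energy estimate carried out at the Galerkin level. First I would note that $B^N:V^N\to V^N$ is continuous: the bilinear-type terms in $\xi^N$ are clearly continuous on the finite-dimensional space, and the only point requiring comment is the continuous dependence of $S(G-\mathrm{div}\,\xi^N)$ and $R(\xi^N)$ on $\xi^N$, which follows from the linearity of $S$ and $R$ together with the bounds \eqref{est_S} and \eqref{est_R}; since $\mathrm{div}$ is a bounded map from $V^N$ into $L_2$, the composition is continuous (indeed Lipschitz) on $V^N$.

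Next I would verify the coercivity-type condition \eqref{lem_P_1}, i.e. that $(B^N(\xi^N),\xi^N)_{V^N}>0$ for $\|\xi^N\|_{W^1_2}=M$ with $M$ chosen large. Here one computes $(B^N(\xi^N),\xi^N)$ by taking $v^N=\xi^N$; this reproduces exactly the computation leading to the energy estimate \eqref{ene} in Lemma \ref{l 8}, but now with $u=\xi^N$, $\sigma=S(G-\mathrm{div}\,\xi^N)$, $\eta=R(\xi^N)$. The key structural facts are: the convective term $\int_\Omega \xi^N\cdot\partial_{x_1}\xi^N\,dx$ integrates to a nonnegative boundary term (plus the controlled inflow/outflow contributions), the viscous term is handled by the Korn inequality \eqref{Korn}, giving $C\|\xi^N\|_{W^1_2}^2$, the coupling terms $-p_1\int \sigma^N\mathrm{div}\,\xi^N$ and $-p_2\int\eta^N\mathrm{div}\,\xi^N$ are absorbed using \eqref{est_S}, \eqref{est_R} and the treatment of the temperature term exactly as in \eqref{ene1}--\eqref{ene4} (with $\kappa$, $L$, $\alpha$ large), and the remaining terms are linear in $\xi^N$ with coefficients controlled by $\|F\|_{L_2}$, $\|G\|_{L_2}$, $\|B_i\|_{L_2(\Gamma)}$, $\|\sigma_{in}\|_{L_2(\Gamma_{in})}$. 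Altogether one gets
$$
(B^N(\xi^N),\xi^N) \geq c\,\|\xi^N\|_{W^1_2}^2 - C\,(DATA)\,\|\xi^N\|_{W^1_2},
$$
which is strictly positive once $\|\xi^N\|_{W^1_2}=M$ with $M> C(DATA)/c$. Lemma \ref{lem_P} then yields $u^N\in V^N$ with $B^N(u^N,\omega_k)=0$ for all $k$ and $\|u^N\|_{W^1_2}\leq M$, which is \eqref{est_uN}.

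I expect the main obstacle to be the bookkeeping in the coercivity estimate — specifically making sure that all the non-sign-definite terms (the pressure and temperature couplings, the slip boundary terms, the inflow/outflow boundary integrals coming from the convective and transport structures, and the term $\int\sigma^2\,\mathrm{div}\,U$) are genuinely absorbed into $c\,\|\xi^N\|_{W^1_2}^2$ and a data-dependent remainder, using the smallness of $\|U\|_{W^2_p}$ and the largeness of $\kappa,L,\alpha$. This is not a new computation: it is exactly the energy estimate of Lemma \ref{l 8} reread with the substitution $(\sigma,\eta)=(S(G-\mathrm{div}\,u^N),R(u^N))$, so I would simply invoke that argument, pointing out that the $L_\infty(L_2)$ bound on $\sigma^N$ from \eqref{est_S} and the $W^1_2$ bound on $\eta^N$ from \eqref{est_R} are what allow the coupling terms to be controlled by $\|u^N\|_{W^1_2}$ plus data. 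One minor additional remark is that $B^N$ being continuous and the set $\{\|\xi^N\|=M\}$ being a sphere in a finite-dimensional Hilbert space is precisely the setting of Lemma \ref{lem_P}, so no compactness beyond finite-dimensionality is needed here.
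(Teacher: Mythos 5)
Your proposal is correct and follows essentially the same route as the paper: apply Lemma \ref{lem_P} to the operator built from $B^N$, verify the sign condition by taking $v^N=\xi^N$ and rerunning the energy estimate with $\sigma^N=S(G-\mathrm{div}\,\xi^N)$ and $\eta^N=R(\xi^N)$ (Korn plus largeness of $\alpha,\kappa,L$ and smallness of $U$), and read off the bound $\|u^N\|_{W^1_2}\le M$. The only cosmetic difference is that the couplings $I_3,I_4$ also produce data-dependent constant terms, so the paper's lower bound is $C[\|\xi^N\|^2-D\|\xi^N\|-D^2]$ rather than purely $c\|\xi^N\|^2-C(DATA)\|\xi^N\|$, which changes nothing in the conclusion.
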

\begin{proof}
We will apply a well-known tool of the Galerkin method, Lemma \ref{lem_P}. 
Thus we define $P^N:V^N \to V^N$ as
\begin{equation}   \label{P}
P^N(\xi^N) = \sum_k B^N(\xi^N,\omega_k) \omega_k \quad \textrm{for} \quad \xi^N \in V^N.
\end{equation}
We have to show that $(P^N(\xi^N),\xi^N)>0$ on some sphere in $V^N$. 
As $B$ is linear in the second variable, we have
\begin{equation}
\begin{array}{c}
\big( P(\xi^N),\xi^N \big) = B^N(\xi^N,\xi^N) =
\underbrace{\int_{\Omega} {\bf S}(\nabla \xi^N):\nabla \xi^N \,dx}_{I_1} \\
+ \underbrace{\int_{\Omega} \xi^N \partial_{x_1}\xi^N \,dx + \int_{\Gamma} \alpha (\xi^N \cdot \tau_i)^2 \,dS}_{I_2}
\underbrace{-p_1 \int_{\Omega} S(G - {\rm div}\, \xi^N) \, {\rm div}\,\xi^N  \,dx}_{I_3} \\
\underbrace{- p_2\int_{\Omega} R(\xi^N) {\rm div}\,v^N \,dS}_{I_4}
- \displaystyle \int_{\Omega} F \cdot \xi^N \,dx - \sum_{i=1}^2 \int_{\Gamma} B_i \, (\xi^N \cdot \tau_i) \,dS .
\end{array}
\end{equation}
By the Korn inequality we have 
\begin{equation} \label{est_P_1}
I_1+I_2 \geq C \, \|\xi^N\|_{W^1_2}^2
\end{equation}
for $\alpha$ large enough. To deal with $I_4$ 
let us denote for a moment $\eta^N = R(\xi^N)$. 
Then setting $w = \eta^N$ in (\ref{weak3}) (with $\xi^N$ instead of $u$) we have  
$$
I_4 = -p_2\int \eta^N {\rm div}\,\xi^N = \frac{p_2}{r_1} 
\Big[ \int_{\Omega} \partial_{x_1}\eta^N \eta^N\,dx  + \kappa \int_{\Omega}|\nabla \eta^N|^2 \,dx 
+ \int_{\Gamma} L (\eta^N)^2 \,dS - \int_\Omega  H \eta^N\,dx \Big]
$$  
and repeating the reasoning from the proof of (\ref{ene}) we infer
\begin{equation} \label{est_P_2}
I_4 \geq - \|H^N\|_{L_2} \|\eta^N\|_{L^2} \geq
- \|H^N\|_{L_2} \big(\|H^N\|_{L_2}+\|\xi^N\|_{W^1_2}\big).
\end{equation}
\noindent
We have to find a bound on $I_3$.
Denoting $\sigma^N = S (G - {\rm div}\, \xi^N)$ we have
\begin{equation} \label{I3_1}
-\int_{\Omega} \sigma^N \, {\rm div}\,\xi^N \,dx =
\int_{\Omega} \sigma^N ( \partial_{x_1} \sigma^N + U \cdot \nabla \sigma^N )\,dx
-\int_{\Omega} \sigma^N \, G \,dx.
\end{equation}
Using (\ref{est_S}) we get
\begin{equation}
-\int_{\Omega} \eta^N \, G^N \,dx \geq - \|\eta^N\|_{L^2} \, \|G^N\|_{L^2}
\geq - C \, \|G^N\|_{L_2} \, \big(\|G^N\|_{L_2} + \|\xi^N\|_{W^1_2} + \|\sigma_{in}\|_{L_2(\Gamma_{in})}\big).
\end{equation}
The remaining part of (\ref{I3_1}) is also not very difficult.
With the first integral on the r.h.s we have
\begin{equation} \label{I3_2}
\begin{array}{c}
\int_{\Omega} \sigma^N ( \partial_{x_1} \sigma^N + U \cdot \nabla \sigma^N )\,dx
= \frac 12 \int_\Omega \big(\partial_{x_1} |\sigma^N|^2 + U \cdot \nabla |\sigma^N|^2\big) \, dx \\[2pt]
= -\frac12 \int_{\Gamma_{in}} |\sigma^N|^2 \, dS + \frac 12 \int_{\Gamma_{out}} |\sigma^N|^2 \, dS + \frac 12 \int_{\Gamma} U\cdot n |\sigma^N|^2 \, dS - \frac 12 \int_{\Omega} |\sigma^N|^2 {\rm div}\, U \, dx \\[2pt]
\geq - \frac 12 \int_{\Gamma_{in}} \sigma_{in}^2 \, dS + \frac 12 \int_{\Gamma} U\cdot n |\sigma_{in}|^2 \, dS - \frac 12 \|{\rm div}\,  U\|_{L_\infty} \|\sigma^N\|_{L_2}^2 \\[2pt]
\geq -C \|\sigma_{in}\|_{L_2}^2 -E \|\sigma^N\|_{L_2}^2 \geq -C -E \big(\|G\|_{L_2}^2 + \|\sigma^N\|_{W^1_2}^2 + \|\sigma_{in}\|_{L_2}^2\big).
\end{array}
\end{equation}
%
Hence we have
\begin{equation} \label{est_P_3}
I_3 \geq - C \, \big(\|G\|^2_{L_2} +\|\sigma_{in}\|^2_{L_2(\Gamma_{in})}\big) -E \|\xi^N\|_{W^1_2}^2. 
\end{equation}
Combining (\ref{est_P_1}),(\ref{est_P_2}) and (\ref{est_P_3}) we conclude
\begin{equation}
\big( P^N(\xi^N),\xi^N \big) \geq C \,\big[ \|\xi^N\|_{W^1_2}^2 - D \, \|\xi^N\|_{W^1_2} - D^2 \big],
\end{equation}
where $D = \|F\|_{L^2(\Omega)}+\|G\|_{L^2(\Omega)} + \|H\|_{L^2(\Omega)} + \|\sigma_{in}\|_{L_2(\Gamma_{in})} + \sum_{i=1}^2\|B_i\|_{L_2(\Gamma)} $.
Thus there exists $M = M(\mu,\Omega,D)$ such that
$\big( P^N(\xi^N),\xi^N \big) > 0 \quad \textrm{for} \quad \|\xi^N\|_{W^1_2}=M$,
and applying Lemma \ref{lem_P} we conclude that
$\exists \xi^{N*}: \quad P^N(\xi^{N*})=0 \quad \textrm{and} \quad \|\xi^{N*}\|_{W^1_2} \leq M$.
By the definition of $P^N$, $u^N = \xi^{N*}$ is a solution to (\ref{system_aprox}).
\end{proof}

\noindent
Now showing the existence of weak solution is straightforward. The result is

\begin{lem} \label{lem_weak}
Assume that $F,G,H \in L_2(\Omega)$, $\sigma_{in} \in L_2(\Gamma_{in})$, $B_i \in L_2(\Gamma)$, $i=1,2$. 
Let $\|U\|_{W^2_p}$ be small enough and assume $\kappa,L,f$ satisfy the assumptions of Theorem \ref{main_thm}.
Then there exists $(u,\sigma,\eta) \in V \times L_\infty(L_2) \times W^1_2$, 
that is a weak solution to system (\ref{system_lin}).
Moreover, the weak solution satisfies  estimate (\ref{ene}).
\end{lem}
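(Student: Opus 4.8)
The plan is to construct the weak solution as the limit, along a subsequence, of the Galerkin approximations $(u^N,\sigma^N,\eta^N)$ already built above, where $u^N\in V^N$ solves \eqref{system_aprox}, $\sigma^N=S(G-{\rm div}\,u^N)$ and $\eta^N=R(u^N)$. The first step is to record bounds uniform in $N$: testing \eqref{system_aprox} against the coefficients of $u^N$ and summing gives $B^N(u^N,u^N)=0$, and then repeating verbatim the computation leading to \eqref{ene} — but with $(u,\sigma,\eta)$ replaced by $(u^N,\sigma^N,\eta^N)$ and with $\sigma^N,\eta^N$ controlled through \eqref{est_S} and \eqref{est_R} — yields
\[
\|u^N\|_{W^1_2}+\|\sigma^N\|_{L_\infty(L_2)}+\|\eta^N\|_{W^1_2}\le C\big[\|F\|_{V^*}+\|G\|_{L_2}+\|H\|_{L_2}+\textstyle\sum_i\|B_i\|_{L_2(\Gamma)}+\|\sigma_{in}\|_{L_2(\Gamma_{in})}\big]
\]
with $C$ independent of $N$.

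From these bounds I would extract a subsequence (not relabelled) with $u^N\rightharpoonup u$ and $\eta^N\rightharpoonup\eta$ weakly in $W^1_2(\Omega)$ and $\sigma^N\overset{*}{\rightharpoonup}\sigma$ weakly-$*$ in $L_\infty(L_2)$; in particular $\sigma^N\rightharpoonup\sigma$ weakly in $L_2(\Omega)$ and, by compactness of the embedding $W^1_2(\Omega)\hookrightarrow L_2(\Omega)$ and of the trace $W^1_2(\Omega)\to L_2(\Gamma)$, $u^N\to u$ and $\eta^N\to\eta$ strongly in $L_2(\Omega)$ and in $L_2(\Gamma)$. Weak lower semicontinuity of the norms (or, once \eqref{weak1}--\eqref{weak3} are verified, an application of Lemma~\ref{l 8}) then gives that the limit triple $(u,\sigma,\eta)\in V\times L_\infty(L_2)\times W^1_2$ obeys \eqref{ene}.

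It then remains to pass to the limit in the three weak identities. In \eqref{weak1} I fix $v=\omega_k$: all volume terms are linear in $(u^N,\sigma^N,\eta^N)$ and pass under the weak $W^1_2/L_2$ convergences, while the boundary term $\int_\Gamma\alpha(u^N\cdot\tau_i)(v\cdot\tau_i)\,dS$ passes because the traces converge strongly in $L_2(\Gamma)$; hence \eqref{weak1} holds for every $v\in\bigcup_N V^N$, and by density of this union in $V$, for every $v\in V$. Equation \eqref{weak3} is treated the same way, using ${\rm div}\,u^N\rightharpoonup{\rm div}\,u$ in $L_2(\Omega)$ for the coupling term and strong trace convergence for $\int_\Gamma Lw\eta^N\,dS$; this identifies $\eta=R(u)$ and yields \eqref{weak3} for all $w\in W^1_2$. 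For the continuity equation I would use that, by construction, every $\sigma^N$ satisfies \eqref{weak2} with right-hand side $\int_\Omega\phi(G-{\rm div}\,u^N)\,dx+\int_{\Gamma_{in}}\sigma_{in}\phi\,dS$: the left-hand side is linear in $\sigma^N$ with the fixed coefficients $\tilde u$ and ${\rm div}\,\tilde u\in L_p\subset L_2$, so it passes under $\sigma^N\rightharpoonup\sigma$ in $L_2(\Omega)$, while on the right ${\rm div}\,u^N\rightharpoonup{\rm div}\,u$ in $L_2(\Omega)$; equivalently, $\sigma=S(G-{\rm div}\,u)$ because the bounded linear operator $S$ of \eqref{def_S} is weak-to-weak continuous.

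Since the system \eqref{system_lin} is linear, there is no genuine compactness obstruction here: no nonlinear term must be identified in the limit, and the above passages are all elementary. The only points that need a little care are bookkeeping: that the spaces $V^N$ are increasing with union dense in $V$ (so the density step in \eqref{weak1} is legitimate); that $\sigma^N$ is controlled only in $L_\infty(L_2)$, so the identification $\sigma=S(G-{\rm div}\,u)$ must be read in the weak-$*$ topology of that space and $\sigma$'s membership in $L_\infty(L_2)$ comes from $L_\infty(L_2)$ being a dual space; and that the boundary integrals in \eqref{weak1} and \eqref{weak3} require compactness of the trace rather than weak convergence alone. This is the mildly delicate part; once \eqref{weak1}--\eqref{weak3} are established, $(u,\sigma,\eta)$ is the asserted weak solution and it satisfies \eqref{ene}.
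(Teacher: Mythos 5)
Your proposal is correct and follows essentially the same route as the paper: Galerkin approximations $(u^N,\sigma^N=S(G-\mathrm{div}\,u^N),\eta^N=R(u^N))$, the uniform bounds from \eqref{est_S}, \eqref{est_R} and \eqref{est_uN}, weak (and weak--$*$) compactness, and passage to the limit in the linear identities \eqref{weak1}--\eqref{weak3}. The only cosmetic difference is that you identify the limit density and its inflow condition through the weak-to-weak continuity of the bounded linear operator $S$, whereas the paper invokes the DiPerna--Lions uniqueness theory for the transport equation \eqref{def_S_1}; both arguments are valid and the rest of your added detail (trace compactness, density of $\bigcup_N V^N$ in $V$) merely fills in steps the paper leaves implicit.
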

\noindent
\begin{proof} 
Let us set $\sigma^N=S(G-{\rm div}\,u^N)$ and $\eta^N = R(u^N)$, where $u^N$ is the solution
to (\ref{system_aprox}).
Estimates (\ref{est_S}), (\ref{est_R}) and (\ref{est_uN}) imply that
$\|u^N\|_{H^1}+\|\sigma^N\|_{L_{\infty}(L_2)} + \|\eta^N\|_{H^1}  \leq C(D_{lin})$. Thus, at least for a chosen subsequence (denoted however in the same way) 
$$
u^N \rightharpoonup u \quad {\rm in} \quad H^1, \qquad \sigma^N \rightharpoonup^* \sigma \quad {\rm in} \quad L_{\infty}(L_2)
\qquad \textrm{and} \quad \eta^N \rightharpoonup \eta \quad {\rm in} \; W^1_2
$$
for some $(u,\sigma,\eta) \in H^1 \times L_{\infty}(L_2) \times H^1$.
Passing to the limit in (\ref{weak1})--(\ref{weak3}) for $(u^N,\sigma^N,\eta^N)$
we conclude that $(u,\sigma,\eta)$ satisfies (\ref{weak1})--(\ref{weak3}), thus we have the weak solution.
To show the boundary condition on the density we can rewrite the r.h.s of (\ref{system_lin})$_{2,6}$ as
\begin{equation} \label{def_S_1}
\left\{ \begin{array}{lcr}
\partial_{x_1}\sigma + \frac{U^2}{1+U^1} \partial_{x_2}\sigma + \frac{U^3}{1+U^1} \partial_{x_3}\sigma = \frac{G-{\rm div}\, u} {1+U^1} & \textrm{in} & {\cal D'}(\Omega), \\
\sigma = \sigma_{in} & \textrm{on} & \Gamma_{in},
\end{array} \right.
\end{equation}
and, treating $x_1$ as a "time" variable, adapt Di Perna-Lions theory of transport equation (\cite{DPL})
that implies the uniqueness of solution to (\ref{def_S_1}) in the class $L_{\infty}(L_2)$.
The proof is thus complete.
\end{proof}

\subsection{Strong solution}
With the estimate \eqref{est_lin} the only problem is to tackle the singularities at the junction of
$\Gamma_0$ with $\Gamma_{in}$ and $\Gamma_{out}$. To this end we reflect the weak solution in a
way which preserves the boundary conditions and then apply the classical elliptic theory to the 
extended solution of the Lam\'e system. The details are given in Lemma \ref{lem_Lame}.
To show the regularity of the temperature we can apply Lemma \ref{lem_Neumann} since
on $\Gamma_{in} \cup \Gamma_{out}$ the boundary condition on the density reduces to the Neumann condition.

\section{Bounds on the approximating sequence}  \label{sec_conv}
In this section we will show the bounds on the sequence (\ref{system_seq}). 
Due to the term $u \cdot \nabla \sigma$ in the continuity equation we are not able
to show directly the convergence of this sequence in $W^2_p \times W^1_p \times W^2_p$ to the strong solution of (\ref{system}).
We can show however its boundedness in $W^2_p \times W^1_p \times W^2_p$. Next, using this
bound we derive the Cauchy condition in $W^1_2 \times L_{\infty}(L_2) \times W^1_2$, and thus show the convergence 
in this space to some $(u,\sigma,\eta)$. On the other hand, the boundedness implies weak
convergence in $W^2_p \times W^1_p \times W^2_p$, and the limit must be $(u,\sigma,\eta)$; hence the solution is strong.

The following lemma gives the boundedness of $(u^n,\sigma^n,\eta^n)$ in $W^2_p \times W^1_p \times W^2_p$. 
\begin{lem}  \label{lem_seq_bound}
Let $\{(u^n,\sigma^n,\eta^n)\}$ be a sequence of solutions to (\ref{system_seq}) starting from 
$(u^0,\sigma^0,\eta^0)=([0,0,0],0,0)$. Then
\begin{equation}
\|u^n\|_{W^2_p} + \|\sigma^n\|_{W^1_p} + \|\eta^n\|_{W^2_p} \leq M,  \label{est_seq_bound}
\end{equation}
where $M$ can be arbitrarily small provided that $D_0$ defined in \eqref{D0}, quantities $\|B_i\|_{W^{1-1/p}_p(\Gamma)}$, $i=1,2$, $\|\sigma_{in}-1\|_{W^1_p(\Gamma_{in})}$
and $\|U\|_{W^2_p}$ are small enough and $\alpha$, $L$, $\kappa$ fulfill the assumptions of Theorem \ref{main_thm}. 
\end{lem}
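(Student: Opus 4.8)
The plan is a straightforward induction on $n$, whose engine is the observation that $F$, $G$, $H$ depend at least quadratically on $(u,\sigma,\eta)$, so the linear estimate of Lemma~\ref{lem_est_lin} turns a small input bound into a comparably small output bound. For $n=0$ there is nothing to prove since $(u^0,\sigma^0,\eta^0)=([0,0,0],0,0)$. Suppose (\ref{est_seq_bound}) holds at level $n$ with a constant $M$ to be fixed below. By construction $(u^{n+1},\sigma^{n+1},\eta^{n+1})$ solves the linear system (\ref{system_lin}) with $U=u_0+u^n$, right-hand sides $F=F(u^n,\sigma^n,\eta^n)$, $G=G(u^n,\sigma^n)$, $H=H(u^n,\sigma^n,\eta^n)$, boundary data $B_i$, $\sigma_{in}$ as in (\ref{FGH}), and $r_0=1+e_2$, $r_1=T_0p_2$. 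First I would verify that the hypotheses of Lemma~\ref{lem_est_lin} hold uniformly along the iteration: $\|U\|_{W^2_p}\leq\|u_0\|_{W^2_p}+\|u^n\|_{W^2_p}\leq CD_0+M$ stays below the smallness threshold of that lemma once $D_0$ and $M$ are small (here $\|u_0\|_{W^2_p}\leq C\|d-n^{(1)}\|_{W^{2-1/p}_p(\Gamma)}\leq CD_0$); the sign conditions (\ref{3.1}) for the transport coefficient are inherited from those of $d$ because $u^n\cdot n|_\Gamma=0$; and $\kappa,L,\alpha$ satisfy the assumptions of Theorem~\ref{main_thm} by hypothesis. This also makes the iterate well defined in $W^2_p\times W^1_p\times W^2_p$ via Lemma~\ref{lem_weak} and the subsequent strong-solution regularity argument. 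The key point is that the constant $C_*$ in (\ref{est_lin}) depends only on $\Omega$, $p$, the viscosities, $\kappa$, $L$, $\alpha$ and on the fixed threshold for $\|U\|_{W^2_p}$, hence is independent of $n$.

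Next I would insert the nonlinear bounds (\ref{est_FGH}) together with the inductive hypothesis into (\ref{est_lin}), also using $\|\sigma_{in}\|_{W^1_p(\Gamma_{in})}=\|\rho_{in}-1\|_{W^1_p(\Gamma_{in})}\leq D_0$ and $\|B_i\|_{W^{1-1/p}_p(\Gamma)}\leq\|b_i-\alpha\tau_i^{(1)}\|_{W^{1-1/p}_p(\Gamma)}+C\|u_0\|_{W^2_p}\leq CD_0$, to obtain
\begin{equation}
\|u^{n+1}\|_{W^2_p}+\|\sigma^{n+1}\|_{W^1_p}+\|\eta^{n+1}\|_{W^2_p}\leq C_*\big[M^2+M^3+D_0(1+D_0)\big]
\end{equation}
with $C_*$ independent of $n$. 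To close the induction I would fix $M$ small first --- small enough that $C_*(M+M^2)\leq\tfrac12$ and that $M$ (together with the eventual $CD_0$) respects the threshold of Lemma~\ref{lem_est_lin} --- and only then choose $D_0$ small enough that $C_*D_0(1+D_0)\leq\tfrac M2$. With these choices the right-hand side above is $\leq M$, so (\ref{est_seq_bound}) propagates to $n+1$; since $M$ may be any sufficiently small number, this also yields the stated conclusion that $M$ can be made arbitrarily small by taking the data small enough.

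The one genuinely delicate point is this order of quantifiers, i.e. the uniformity in $n$ of $C_*$: one must be sure that the smallness required in Lemma~\ref{lem_est_lin} of the transport coefficient is met by \emph{every} $U=u_0+u^n$ arising in the iteration, which is exactly why $M$ has to be fixed (below that threshold) before $D_0$ is chosen. Everything else is bookkeeping: the quadratic-and-cubic dependence of $F$, $G$, $H$ on $(u,\sigma,\eta)$ in (\ref{est_FGH}) is precisely what makes the absorbing inequality $C_*(M^2+M^3)\leq M/2$ available, while the leftover data term $C_*D_0(1+D_0)$ is eliminated by smallness of $D_0$.
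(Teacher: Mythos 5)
Your proposal is correct and follows essentially the same route as the paper: apply Lemma \ref{lem_est_lin} to \eqref{system_seq}, feed in the quadratic/cubic bounds \eqref{est_FGH} to get a recursion of the form $A_{n+1}\leq C(A_n^2+A_n^3+D_0)$, and close it by induction after fixing the smallness threshold $M$ (the paper's $2\delta$ with $\delta<1/(8C)$) before choosing $D_0$. Your additional explicit checks — that $U=u_0+u^n$ stays under the smallness threshold of the linear lemma uniformly in $n$ and that the sign conditions \eqref{3.1} are inherited from $d$ — are points the paper leaves implicit, but they do not change the argument.
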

\noindent
\begin{proof} Estimate (\ref{est_lin}) for  system (\ref{system_seq}) reads
\begin{equation}  \label{unwn_1}
\begin{array}{c}
\|u^{n+1}\|_{W^2_p} + \|\sigma^{n+1}\|_{W^1_p} + \|\eta^{n+1}\|_{W^2_p} \leq \\
\leq C \, \big[ \|F(u^n,\sigma^n,\eta^n)\|_{L_p} + \|G(u^n,\sigma^n,\eta^n)\|_{W^1_p} + \|H(u^n,\sigma^n,\eta^n)\|_{L_p}\\
 + \sum_{i=1}^2\|B_i\|_{W^{1-1/p}_p(\Gamma)} + \|\sigma_{in}\|_{W^1_p(\Gamma_{in})} \big].
\end{array}
\end{equation}
Denoting $A_n = \|u^n\|_{W^2_p} + \|\sigma^n\|_{W^1_p} + \|\eta^n\|_{W^2_p}$, 
from (\ref{est_FGH}) and (\ref{unwn_1}) we get
\begin{equation} \label{an1}
A_{n+1} \leq C(A_n^2 + A_n^3 + D_0).
\end{equation}
We aim at showing that $A_n$
thus $A_n$ is bounded by a constant that can be arbitrarily small
provided that $A_0$ and $D_0$ are small enough.
Indeed, let us fix $0 < \delta < \frac{1}{8C}$. (Note that the constant $C$ can be without loss of generality taken larger than $1$.)  Assume that $CD_0<\delta$.
Then (\ref{an1}) entails an implication $A_n \leq 2\delta \Rightarrow A_{n+1} \leq 2\delta$
and we can conclude that
%
\begin{equation}
\|u^n\|_{W^2_p} + \|\sigma^n\|_{W^1_p} + \|\eta^n\|_{W^2_p} \leq 2 \delta \quad \forall \, n \in \mathbb{N}. 
\end{equation}
\end{proof}

The next lemma almost completes the proof of the Cauchy condition in $W^1_2 \times L_{\infty}(L_2) \times W^1_2$
for the iterating scheme.
\begin{lem} \label{lem_cauchy1}
Let the assumptions of Lemma \ref{lem_seq_bound} hold. Then we have
\begin{equation}  \label{est_cauchy1}
\begin{array}{c}
\|u^{n+1}-u^{m+1}\|_{W^1_2} + \|\sigma^{n+1}-\sigma^{m+1}\|_{L_{\infty}(L_2)} + \|\eta^{n+1}-\eta^{m+1}\|_{W^1_2} \\
\leq E(M) \, \big( \|u^{n}-u^{m}\|_{W^1_2} + \|\sigma^{n}-\sigma^{m}\|_{L_{\infty}(L_2)} + \|\eta^{n}-\eta^{m}\|_{W^1_2} \big),
\end{array}
\end{equation}
where $M$ is the constant from (\ref{est_seq_bound}) and $E(M)$ can be taken arbitrarily small.
\end{lem}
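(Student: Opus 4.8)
\emph{Proof proposal.} The plan is to pass to the system satisfied by the differences of two consecutive iterates and to apply the energy estimate of Lemma~\ref{l 8}. Put $\bar u := u^{n+1}-u^{m+1}$, $\bar\sigma := \sigma^{n+1}-\sigma^{m+1}$, $\bar\eta := \eta^{n+1}-\eta^{m+1}$, and at the preceding level $\delta u := u^n-u^m$, $\delta\sigma := \sigma^n-\sigma^m$, $\delta\eta := \eta^n-\eta^m$. Subtracting the $m$-th copy of (\ref{system_seq}) from the $n$-th one, and splitting the transport term in the continuity equation as $(u^n+u_0)\cdot\nabla\sigma^{n+1}-(u^m+u_0)\cdot\nabla\sigma^{m+1}=(u^n+u_0)\cdot\nabla\bar\sigma+\delta u\cdot\nabla\sigma^{m+1}$, one sees that $(\bar u,\bar\sigma,\bar\eta)$ solves a linear system of the form (\ref{system_lin}) with transport field $U=u^n+u_0$ (small in $W^2_p$ by Lemma~\ref{lem_seq_bound}), with homogeneous boundary data — the data $B_i$ and $\sigma_{in}$ do not depend on the iterate, and the condition for $\eta$ is already homogeneous — and with right-hand sides $\bar F=F(u^n,\sigma^n,\eta^n)-F(u^m,\sigma^m,\eta^m)$, $\bar G=G(u^n,\sigma^n)-G(u^m,\sigma^m)-\delta u\cdot\nabla\sigma^{m+1}$ and $\bar H=H(u^n,\sigma^n,\eta^n)-H(u^m,\sigma^m,\eta^m)$. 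Lemma~\ref{l 8} then gives at once
\[
\|\bar u\|_{W^1_2}+\|\bar\sigma\|_{L_\infty(L_2)}+\|\bar\eta\|_{W^1_2}\le C\big[\|\bar F\|_{V^*}+\|\bar G\|_{L_2}+\|\bar H\|_{L_2}\big],
\]
so the task reduces to bounding the right-hand side by $E(M)\big(\|\delta u\|_{W^1_2}+\|\delta\sigma\|_{L_\infty(L_2)}+\|\delta\eta\|_{W^1_2}\big)$.

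The key observation is that, up to the iterate-independent pieces (which cancel in the differences), every term in $F,G,H$ is quadratic or cubic in $(u,\sigma,\eta)$, or linear in these quantities with a coefficient bounded by $C(M+D_0)$ — these coefficients being differences of derivatives of the $C^2$ functions $\pi,e$, estimated by the mean value theorem, or the small fields $u_0$, $\nabla(\overline\theta_0+\overline\theta_1)$. Taking the difference at levels $n$ and $m$ and using the product rule, each resulting term contains one factor carrying a $W^2_p$ or $W^1_p$ norm of an iterate (hence small by Lemma~\ref{lem_seq_bound}) and one factor that is $\delta u,\delta\sigma,\delta\eta$ or a first derivative of one of them. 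For $\bar G$ and $\bar H$ this is routine: Hölder's inequality together with $W^1_p\hookrightarrow L_\infty$ (here $p>3$), $W^1_2\hookrightarrow L_6$ and $L_\infty(L_2)\hookrightarrow L_2(\Omega)$ yields the required $L_2$ bound. Note that only $\|\delta\sigma\|_{L_\infty(L_2)}$, never $\nabla\delta\sigma$, enters, because wherever $\delta\sigma$ occurs it multiplies a factor (such as ${\rm div}\,u_0$ or ${\rm div}\,u^m$) that is small in $L_\infty$, while the extra term $\delta u\cdot\nabla\sigma^{m+1}$ lies in $L_2$ since $L_6\cdot L_p\subset L_2$ for $p>3$.

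The delicate part is $\bar F$, which has to be measured in $V^*$ and not in $L_2$: it contains $\delta\sigma\,f$ and $\big(\partial_\rho\pi(1{+}\sigma^n,\Theta^n)-p_1\big)\nabla\delta\sigma$ (with $\Theta^j:=\overline\theta_0+\overline\theta_1+\eta^j$), plus the analogous $\partial_\theta\pi$-term, and since $\delta\sigma$ is controlled only in $L_\infty(L_2)$ and $f$ only in $L_p$, no $L_2$ estimate is available. For $v\in V$ one writes $\langle\delta\sigma\,f,v\rangle=\int_\Omega\delta\sigma\,f\,v$ and bounds it by $\|\delta\sigma\|_{L_2}\|f\|_{L_p}\|v\|_{L_6}$ using $1/3+1/6=1/2$ and $p>3$; for the pressure term one integrates by parts,
\[
\langle(\partial_\rho\pi(1{+}\sigma^n,\Theta^n)-p_1)\nabla\delta\sigma,\,v\rangle=-\int_\Omega\delta\sigma\,\Big[(\partial_\rho\pi(1{+}\sigma^n,\Theta^n)-p_1)\,{\rm div}\,v+v\cdot\nabla\partial_\rho\pi(1{+}\sigma^n,\Theta^n)\Big],
\]
where the boundary integral vanishes because $v\cdot n=0$ on $\Gamma$. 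Since $\partial_\rho\pi(1{+}\sigma^n,\Theta^n)-p_1$ is small in $L_\infty$ and $\nabla\partial_\rho\pi(1{+}\sigma^n,\Theta^n)=\partial^2_{\rho\rho}\pi\,\nabla\sigma^n+\partial^2_{\rho\theta}\pi\,\nabla\Theta^n$ is small in $L_p$ (using $\pi\in C^2$ and Lemma~\ref{lem_seq_bound}), Hölder gives the bound $C(M+D_0)\|\delta\sigma\|_{L_\infty(L_2)}\|v\|_{W^1_2}$. The leftover $\big(\partial_\rho\pi(1{+}\sigma^n,\Theta^n)-\partial_\rho\pi(1{+}\sigma^m,\Theta^m)\big)\nabla\sigma^m$ is estimated by $C(|\delta\sigma|+|\delta\eta|)\,|\nabla\sigma^m|$ with $\nabla\sigma^m\in L_p$ small; the convective and cubic contributions of $\bar F$, and the $\partial_\theta\pi$-term — for which $\nabla\delta\eta\in L_2$ is available, so no integration by parts is needed — are handled likewise. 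Collecting all bounds and inserting them into the energy estimate yields (\ref{est_cauchy1}) with $E(M)\to0$ as $M\to0$. I expect the main obstacle to be precisely this $V^*$-estimate of $\bar F$: one must recognise that the pressure-gradient term and the forcing term cannot be controlled in $L_2$ because the density difference carries no spatial derivative, and then exploit the slip structure ($v\cdot n=0$ on $\Gamma$) so that the integration by parts moving the gradient onto the test function leaves no boundary contribution; the remaining estimates are careful but straightforward bookkeeping of quadratic and cubic terms.
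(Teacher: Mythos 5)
Your proposal is correct and follows essentially the same route as the paper: subtract the two iteration systems, apply the energy estimate of Lemma~\ref{l 8} with homogeneous boundary data, bound the differences of $G$ and $H$ in $L_2$ and the awkward term $(u^n-u^m)\cdot\nabla\sigma^{m+1}$ via $L_6\cdot L_p\subset L_2$, and handle the pressure-gradient part of the $F$-difference in $V^*$ by integrating by parts onto the test function (using $v\cdot n=0$ on $\Gamma$) so that only $\|\sigma^n-\sigma^m\|_{L_\infty(L_2)}$ is needed. This matches the paper's decomposition into $F^{n,m}_1$, $F^{n,m}_{2,1}$, $F^{n,m}_{2,2}$ and its Hölder bookkeeping exactly.
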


\noindent
\begin{proof}
Subtracting (\ref{system_seq})$_m$ from (\ref{system_seq})$_n$ we arrive at
\begin{equation} \label{dif_seq1}
\begin{array}{c}
\partial_{x_1} (u^{n+1} - u^{m+1}) - {\bf S}(\nabla (u^{n+1}-u^{m+1})) + p_1 \nabla (\sigma^{n+1} - \sigma^{m+1}) \\
 + p_2\nabla (\eta^{n+1}-\eta^{m+1}) = F(u^n,\sigma^n,\eta^n) - F(u^m,\sigma^m,\eta^m),
\end{array}
\end{equation}
\begin{equation} \label{dif_seq2}
\begin{array}{c}
{\rm div}\, (u^{n+1}-u^{m+1}) + \partial_{x_1} (\sigma^{n+1} - \sigma^{m+1}) + (u^n+u_0) \cdot \nabla (\sigma^{n+1}-\sigma^{m+1}) = \\
= G(u^n,\sigma^n) - G(u^m,\sigma^m) - (u^n-u^m) \cdot \nabla \sigma^{m+1},
\end{array}
\end{equation}
\begin{equation} \label{dif_seq3}
\begin{array}{c}
r_1 \partial_{x_1}(\eta^{n+1} - \eta^{m+1}) + r_2 {\rm div}\,(u^{n+1} - u^{m+1})  -\kappa \Delta (\eta^{n+1} - \eta^{m+1}) = \\
= H(u^n,\sigma^n,\eta^n)-H(u^m,\sigma^m,\eta^m) 
\end{array}
\end{equation} 
\begin{equation} \label{dif_seq4}
\begin{array}{c}
{\bf S}(\nabla(u^{n+1}-u^{m+1}))\cdot \tau_i +\alpha \, (u^{n+1}-u^{m+1}) \cdot \tau_i|_{\Gamma} = 0, \\
n\cdot  (u^{n+1}-u^{m+1})|_{\Gamma} = 0, \\
\sigma^{n+1}-\sigma^{m+1}|_{\Gamma_{in}}=0,\\
\kappa \frac{\partial (\eta^{n+1}-\eta^{m+1})}{\partial n} + L (\eta^{n+1}-\eta^{m+1}) =0.
\end{array}
\end{equation}
Estimate (\ref{ene}) applied to this system yields
\begin{displaymath}
\begin{array}{c}
\|u^{n+1}-u^{m+1}\|_{W^1_2} + \|\sigma^{n+1}-\sigma^{m+1}\|_{L_{\infty}(L^2)} + \|\eta^{n+1}-\eta^{m+1}\|_{W^1_2} \leq \nonumber\\
\|F(u^n,\sigma^n,\eta^n) - F(u^m,\sigma^m,\eta^m)\|_{V^*} + \|G(u^n,\sigma^n) - G(u^m,\sigma^m)\|_{L_2} \\ + \|(u^n - u^m) \cdot \nabla \sigma^{m+1}\|_{L_2}
+ \|H(u^n,\sigma^n,\eta^n) - H(u^m,\sigma^m,\eta^m)\|_{L_2}.
\end{array}
\end{displaymath}
In order to derive (\ref{est_cauchy1}) from the above inequality we have to examine the r.h.s.
The differences in $G$ and $H$ are bounded in a straightforward way: using the imbedding
$W^1_p \hookrightarrow L_{\infty}$ and H\"older's inequality we derive
\begin{equation}
\begin{array}{c}
\|G(u^n,\sigma^n) - G(u^m,\sigma^m)\|_{L_2} + \|H(u^n,\sigma^n,\eta^n) - H(u^m,\sigma^m,\eta^m)\|_{L_2}  \\ 
\leq E(M) \, \big( \|u^n-u^m\|_{W^1_2} + \|\sigma^n-\sigma^m\|_{L_{\infty}(L_2)} + \|\eta^n-\eta^m\|_{W^1_2} \big).\\
\end{array}
\end{equation}
\noindent
The difference in $F$ must be investigated more carefully.
A direct calculation yields
$$F(u^n,\sigma^n,\eta^n) - F(u^m,\sigma^m,\eta^m) = F^{n,m}_1 + F^{n,m}_2,$$ 
where in $F^{n,m}_1$ we include all the terms which do not contain $\nabla (\sigma^n-\sigma^m)$ as well as other terms containing the gradient of density or its difference.
Direct calculation using the imbedding $H^1 \hookrightarrow L_6$, $W^1_p \hookrightarrow L_{\infty}$ and H\"older's inequality
yields
\begin{equation}
\|F^{n,m}_1\|_{V^*} \leq E(M) \, \big( \|u^n-u^m\|_{W^1_2} + \|\sigma^n-\sigma^m\|_{L_{\infty}(L_2)} \big).
\end{equation}
Next,
\begin{equation} \label{fnm2}
\begin{array}{c}
F^{n,m}_2 = -(\partial_\rho \pi(\sigma^n+1,\overline{\theta}_0 + \overline{\theta}_1 + \eta^n)-p_1) \nabla (\sigma^n -  \sigma^m)  \\
- \big(\partial_\rho \pi(\sigma^n+1,\overline{\theta}_0 + \overline{\theta}_1+ \eta^n) - \partial_\rho \pi(\sigma^m+1,\overline{\theta}_0 + \overline{\theta}_1 + \eta^m)\big) \nabla \sigma^m \\
=: F^{n,m}_{2,1} + F^{n,m}_{2,2}.
\end{array}
\end{equation}
We have to compute $V^*$ norm of $F^{n,m}_2$, hence we multiply by $v \in V$ and integrate.
With the first term we have
$$
\begin{array}{c}
\big|\int_\Omega -(\partial_\rho \pi(\sigma^n+1,\overline{\theta}_0 + \overline{\theta}_1 + \eta^n)-p_1) \nabla (\sigma^n-\sigma^m) \cdot v \,dx\big| \\
\leq  \big|\int_\Omega (\partial_\rho \pi(\sigma^n+1,\overline{\theta}_0 + \overline{\theta}_1 + \eta^n)-p_1)  (\sigma^n-\sigma^m) {\rm div} \,v \,dx \big| \\
+ \big|\int_\Omega \nabla \partial_\rho \pi (\sigma^n+1,\overline{\theta}_0 + \overline{\theta}_1 + \eta^n) \cdot v (\sigma^n-\sigma^m) \, dx\big| \\
\leq C \big(\|\eta^n\|_{W^1_p} + \|\sigma^n\|_{W^1_p} + \|\overline{\theta_0}-T_0\|_{W^1_p} + \|\overline{\theta_1}\|_{W^1_p}\big) \|(\sigma^n-\sigma^m)\|_{L_2} \|v\|_{W^1_2},
\end{array}
$$
hence
\begin{equation}
\|F^{n,m}_{2,1}\|_{V^*} \leq E(M) \|(\sigma^n-\sigma^m)\|_{L_{\infty}(L_2)}.
\end{equation}
We have used the fact that $\frac{1}{2}+\frac{1}{6}+\frac{1}{p}<1$ and the imbedding $W^1_2 \hookrightarrow L^6$.
%
%
%
%
%
%
The other term $F^{n,m}_{2,2}$ can be estimated similarly, only without integration by parts.
Combining the estimates on $F^{n,m}_1$,$F^{n,m}_{2,1}$ and $F^{n,m}_{2,2}$ we conclude
\begin{equation} \label{est_fnm}
\|F(u^n,\sigma^n,\eta^n) - F(u^m,\sigma^m,\eta^m)\|_{V^*} 
\leq E(M) \big( \|u^n-u^m\|_{W^1_2} + \|\sigma^n-\sigma^m\|_{L_{\infty}(L_2)} + \|\eta^n-\eta^m\|_{W^1_2} \big).
\end{equation}
The only term that remains to estimate is $(u^n-u^m) \cdot \nabla \sigma^m$.
We emphasize that this is the term which makes it impossible to show the convergence
in $W^2_p \times W^1_p$ directly. Namely, if we would like to apply the estimate (\ref{est_lin})
to the system for the difference then we would have to estimate $\|(u^n-u^m) \cdot \nabla \sigma^m\|_{W^1_p}$
what can not be done as we do not have any knowledge about $\|\sigma^m\|_{W^2_p}$.
Fortunately we only need the $L_2$-norm of this awkward term, which can be bounded in a direct way as
\begin{equation}
\|(u^n-u^m) \cdot \nabla \sigma^m\|_{L_2} \leq \|u^n-u^m\|_{L_q} \, \|\nabla \sigma^m\|_{L_p}
\leq C \, \|\sigma^m\|_{W^1_p} \, \|u^n-u^m\|_{W^1_2},
\end{equation}
since $q = \frac{2p}{p-2} <6$ for $p<3$. We have thus completed the proof of (\ref{est_cauchy1}). \end{proof}

Now, Lemma \ref{lem_seq_bound} implies that the constant $E(M) < 1$ provided that
the data is small enough and the starting point $(u^0,\sigma^0,\eta^0) = ([0,0,0],0,0)$.
It completes the proof of the Cauchy condition in $H^1 \times L_{\infty}(L_2) \times H^1$ 
for the sequence $(u^n,\sigma^n,\eta^n)$.

{\bf Remark.} Lemmas \ref{lem_seq_bound} and \ref{lem_cauchy1} hold for any starting point $(u^0,\sigma^0,\eta^0)$
small enough in $W^2_p \times W^1_p \times W^2_p$, but we can start the iteration from $([0,0,0],0,0)$
without loss of generality.

\section{Proof of Theorem \ref{main_thm}} \label{sec_proof}
In this section we prove our main result, Theorem \ref{main_thm}. First we show existence of the solution
passing to the limit with the sequence $(u^n,\sigma^n,\eta^n)$ and next we show that this solution is unique
in the class of solutions satisfying (\ref{est_main}).

{\bf Existence of the solution}.
Since we have the Cauchy condition on the sequence $(u^n,\sigma^n,\eta^n)$ only in the space
$W^1_2 \times L_{\infty}(L_2) \times W^1_2$, first we have to show the convergence in the weak formulation
of the problem (\ref{system}). The sequence $(u^n,\sigma^n,\eta^n)$ satisfies in particular the
following weak formulation of (\ref{system_seq}):

\begin{equation} \label{weak1_seq}
\begin{array}{c}
\int_{\Omega} \{ v \cdot \partial_{x_1} u^{n+1} + {\bf S}(\nabla u^{n+1}): \nabla v
- p_1 \sigma^{n+1} \, {\rm div}\,v - p_2 \eta^{n+1} {\rm div}\,v \} \,dx\\
+ \int_{\Gamma} \alpha (u^{n+1} \cdot \tau_i) \, (v \cdot \tau_i) \,dS = 
 \int_{\Omega} F(u^n,\sigma^n,\eta^n) \cdot v \,dx + \sum_{i=1}^2\int_{\Gamma} B_i (v \cdot \tau_i) \,dS \quad \forall \; v \in V
\end{array}
\end{equation}

\begin{equation} \label{weak2_seq}
-\int_{\Omega} \sigma^{n+1} \tilde u^n \cdot \nabla \phi \,dx - \int_{\Omega} \sigma^{n+1} \phi \, {\rm div}\, \tilde u^{n+1} \,dx =
\int_{\Omega} \phi (G(u^n,\sigma^n) - {\rm div}\, u^{n+1}) \,dx + \int_{\Gamma_{in}} \sigma_{in} \phi \, dS
\end{equation}
$\forall \; \phi \in  C^{\infty}(\overline{\Omega}): \phi|_{\Gamma_{out}}=0$, where
$\tilde u^n = [1+(u^n+u_0)^{1},(u^n+u_0)^{2},(u^n+u_0)^{3}]$, and

\begin{equation} \label{weak3_seq}
\begin{array}{c}
(1+ e_2)\int_{\Omega} \partial_{x_1}\eta^{n+1} w + T_0 p_2\int_{\Omega} w {\rm div}\,u^{n+1} + \kappa \int_{\Omega} \nabla w \cdot \nabla \eta^{n+1} 
\\
+ \int_{\Gamma} Lw\eta^{n+1} \,dS = \int_{\Omega} H(u^n,\sigma^n,\eta^n)w\,dx
\end{array}
\end{equation}
$\forall \; w \in H^1$.

Now using the convergence in $W^1_2 \times L_{\infty}(L_2) \times W^1_2$ combined with the bound (\ref{est_seq_bound})
in $W^2_p \times W^1_p \times W^2_p$ we can pass to the limit in (\ref{weak1_seq})--(\ref{weak3_seq}).
The convergence of the l.h.s. of (\ref{weak1_seq})--(\ref{weak3_seq}) is obvious.
Recalling the definition (\ref{FGH}) of $F(\cdot),G(\cdot)$ and $H(\cdot)$ we verify easily that 
\begin{equation}  \label{conv_G} 
\int_{\Omega} G(u^n,\sigma^n) \cdot v \,dx \rightarrow \int_{\Omega} G(u,\sigma) \cdot v \,dx
\end{equation}
and
\begin{equation} \label{conv_H} 
\int_{\Omega} H(u^n,\sigma^n,\eta^n) \cdot v \,dx \rightarrow \int_{\Omega} H(u,\sigma,\eta) \cdot v \,dx,
\end{equation}
and the only step in showing the convergence of $F$ which requires more attention is to show that
\begin{equation} \label{6.5a}
\int_{\Omega} (\partial_\rho \pi(\sigma^n+1,\overline{\theta}_0 + \overline{\theta}_1 + \eta^n)-p_1) \nabla \sigma^n \cdot v \,dx \to  \int_{\Omega} (\partial_\rho \pi(\sigma+1,\overline{\theta}_0 + \overline{\theta}_1 + \eta)-p_1) \nabla \sigma \cdot v \,dx.
\end{equation}
However, since $\sigma^n \to \sigma$ and $\eta^n \to \eta$ in $C(\overline{\Omega})$ as well as $\nabla \sigma ^n \rightharpoonup \nabla \sigma$ in $L_p(\Omega)$, we easily verify that \eqref{6.5a} holds true.
%
We conclude that $(u,\sigma,\eta)$ satisfies (\ref{weak1_seq})--(\ref{weak3_seq}). Now we need to show 
that this implies the strong formulation (\ref{system}), what can be done in a standard way, just integrating by parts in the weak formulation.

Now we set 
$$
v = \bar v + u + u_0, \quad \rho=\sigma+1, \quad \theta = \eta + \overline{\theta}_0 + \overline{\theta}_1
$$  
and we see that $(v,\rho,\theta)$ solves (\ref{main_system}). 
We clearly have $E(M) = E(D_0)$ where $D_0$ is defined in (\ref{D0}),
hence estimate (\ref{est_main}) holds.  

{\bf Uniqueness.} The uniqueness in the class of solutions satisfying (\ref{est_main})
actually results directly from the method of the proof, more precisely from the proof
of (\ref{est_cauchy1}). Namely, we can show uniqueness on the level of perturbations,
i.e. uniqueness for (\ref{system}). Consider two solutions with the same data, denote it by
$(u_1,\sigma_1,\eta_1)$ and $(u_2,\sigma_2,\eta_2)$. Their difference then satisfy
\begin{displaymath}
\begin{array}{c}
\partial_{x_1} (u_1 - u_2) - {\bf S}(\nabla u_1 - \nabla u_2) 
+ p_1 \nabla (\sigma_1 - \sigma_2) \\
+ p_2\nabla (\eta_1-\eta_2) = F(u_1,\sigma_1,\eta_1) - F(u_2,\sigma_2,\eta_2),
\end{array}
\end{displaymath}
\begin{displaymath}
\begin{array}{c}
{\rm div}\, (u_1-u_2) + \partial_{x_1} (\sigma_1 - \sigma_2) + (u_1+u_0) \cdot \nabla (\sigma_1-\sigma_2)  \\
= G(u_1,\sigma_1) - G(u_2,\sigma_2) - (u_1-u_2) \cdot \nabla \sigma_2,
\end{array}
\end{displaymath}
\begin{displaymath}
\begin{array}{c}
(1+ e_2)\partial_{x_1}(\eta_1 - \eta_2) + T_0 p_2 {\rm div}\,(u_1 - u_2)  -\kappa \Delta (\eta_1 - \eta_2) \\
= H(u_1,\sigma_1,\eta_1)-H(u_2,\sigma_2,\eta_2) 
\end{array}
\end{displaymath}
\begin{displaymath}
\begin{array}{c}
{\bf S}(\nabla(u_1-u_2)n)\cdot \tau_i +\alpha \, (u_1-u_2) \cdot \tau_i|_{\Gamma} = 0, \\
n\cdot  (u_1-u_2)|_{\Gamma} = 0, \\
\sigma_1-\sigma_2|_{\Gamma_{in}}=0,\\
\kappa \frac{\partial (\eta_1-\eta_2)}{\partial n} + L (\eta_1-\eta_2) =0.
\end{array}
\end{displaymath}  

\noindent
Now recall that to show (\ref{est_cauchy1}) we applied only (\ref{dif_seq1})--(\ref{dif_seq4})
and (\ref{est_seq_bound}), hence from the above equations
we conclude
\begin{equation}
\begin{array}{c}
\|u_1-u_2\|_{W^1_2} + \|\sigma_1-\sigma_2\|_{L_{\infty}(L_2)} + \|\eta_1-\eta_2\|_{W^1_2} \\
\leq E(M) \, \big( \|u_1-u_2\|_{W^1_2} + \|\sigma_1-\sigma_2\|_{L_{\infty}(L_2)} + \|\eta_1-\eta_2\|_{W^1_2} \big).
\end{array}
\end{equation}
Provided the data are small enough we have $E(M)<1$ and so
$$
\|u_1-u_2\|_{W^1_2} + \|\sigma_1-\sigma_2\|_{L_{\infty}(L_2)} + \|\eta_1-\eta_2\|_{W^1_2} = 0
$$
which completes the proof of uniqueness and hence of Theorem \ref{main_thm}. 

\medskip

\noindent \emph{Acknowledgment:}
 The work of  M.P. was supported by the grant of
the Czech Science Foundation No. 201/09/0917.

\end{document}